\newtheorem{defn0}{Definition}[section]
\newtheorem{prop0}[defn0]{Proposition}
\newtheorem{thm0}[defn0]{Theorem}
\newtheorem{lemma0}[defn0]{Lemma}
\newtheorem{claim0}[defn0]{Claim}
\newtheorem{corollary0}[defn0]{Corollary}
\newtheorem{example0}[defn0]{Example}
\newtheorem{remark0}[defn0]{Remark}
\newtheorem{assumption0}[defn0]{Assumption}
\newtheorem{conjecture0}[defn0]{Conjecture}
\newtheorem{notation0}[defn0]{Notation}
\newtheorem{question0}[defn0]{Question}
\newenvironment{definition}{\begin{defn0}\rm}{\end{defn0}}
\newenvironment{proposition}{\begin{prop0}}{\end{prop0}}
\newenvironment{theorem}{\begin{thm0}}{\end{thm0}}
\newenvironment{lemma}{\begin{lemma0}}{\end{lemma0}}
\newenvironment{remark}{\begin{remark0}\rm}{\end{remark0}}
\newenvironment{assumption}{\begin{assumption0}\rm}{\end{assumption0}}
\newcommand{\ipa}[1]{\left(#1\right)}
\newcommand{\Gal}{{\mathrm {Gal}}}
\newcommand{\ord}{\mathrm{ord}}
\newcommand{\M}{\mathrm{M}}
\newcommand{\Ind}{{\mathrm{Ind}}}
\newcommand{\PGL}{{\mathrm{PGL}}}
\newcommand{\GL}{{\mathrm{GL}}}
\newcommand{\SU}{{\mathrm{SU}}}
\newcommand{\SO}{{\mathrm {SO}}}
\newcommand{\SL}{{\mathrm {SL}}}
\newcommand{\Z}{{\mathbb Z}}
\newcommand{\A}{{\mathbb A}}
\newcommand{\Q}{{\mathbb Q}}
\newcommand{\C}{{\mathbb C}}
\newcommand{\R}{{\mathbb R}}
\newcommand{\N}{{\mathbb N}}
\newcommand{\cA}{{\mathcal A}}
\newcommand{\cC}{{\mathcal C}}
\newcommand{\cF}{{\mathcal F}}
\newcommand{\cB}{{\mathcal B}}
\newcommand{\cG}{{\mathcal G}}
\newcommand{\cV}{{\mathcal V}}
\newcommand{\cT}{{\mathcal T}}
\newcommand{\cI}{{\mathcal I}}
\newcommand{\cL}{{\mathcal L}}
\newcommand{\cP}{{\mathcal P}}
\newcommand{\cO}{{\mathcal O}}
\newcommand{\Hom}{{\mathrm {Hom}}}
\begin{document}

\title{Waldspurger formulas in higher cohomology}
\author{Santiago Molina}

\newcommand{\Addresses}{{
		\bigskip
		\footnotesize
	
		\medskip
		Santiago Molina; Universitat de Lleida\\Campus Universitari Igualada - UdL
            Av. Pla de la Massa, 8\\
            08700 Igualada, Spain\par\nopagebreak
		\texttt{santiago.molina@udl.cat}
		
}}

\maketitle

\begin{abstract}
The classical Waldspurger formula, which computes integral periods of quaternionic automorphic forms in maximal torus, has been used in a wide variety of arithmetic applications, such as the Birch and Swinnerton-Dyer conjecture in rank 0 situations. This is why this formula is considered the rank 0 analogue of the celebrated Gross-Zagier formula.

On the other hand, the Harder-Eichler-Shimura correspondence allows us to interpret this quaternionic automorphic form as a cocycle in group cohomology spaces of certain arithmetic groups. In this way we can realize the corresponding automorphic representation in the \'etale cohomology of certain Shimura varieties. In this work we find a formula, analogous to that of Waldspurger, which relates  cap-products of this cocycle and canonical fundamental classes associated with maximal torus with special values of Rankin-Selberg L-functions.
\end{abstract}



\section{Introduction}

Let $F$ be any number field, and let $E/F$ be any quadratic extension of relative discriminant $D$. Write $\eta_T$ for the quadratic character associated with $E/F$. Let $B$ be a quaternion algebra over $F$ admitting an embedding $E\hookrightarrow B$. Let $\pi$ be an irreducible automorphic representation of $B^\times$ and let $\Pi$ be its Jacquet-Langlands lift to $\GL_2/F$.
The classical Waldspurger formula computes the period integral of an automorphic form $f\in \pi$ twisted by a Hecke character $\chi$ of $E^\times$
in terms of a critical value of the Rankin-Selberg L-function $L(s,\Pi,\chi)$ associated with $\Pi$. 
More concretely, if $d^\times t=\prod_vd^\times t_v$ is the usual Tamagawa measure of $\A_E^\times/E^\times\A_F^\times$ and we write
\[
\ell(f,\chi)=\int_{\A_E^\times/E^\times\A_F^\times}\chi(t)f(t)d^\times t,
\]
then the formula reads in one of the best known forms:
\begin{equation}\label{WFintro}
    \frac{\ell(f_1,\chi)\cdot \ell(f_2,\chi^{-1})}{\langle f_1,f_2\rangle}=\frac{\Lambda_F(2)\cdot\Lambda(1/2,\Pi,\chi)}{2\Lambda(1,\Pi,{\rm ad})}\cdot\prod_v\beta_v(f_{1,v},f_{2,v}),\qquad f_1,f_2\in\pi,
\end{equation}
where $\Lambda$ stands for the complete global L-functions, that is, those defined by the
 product of local L-functions over all places, $\Lambda_F(s)=\zeta_F(s)\prod_{v\mid\infty}\zeta_{F_v}(s)$ is the complete L-series for the trivial character,
\begin{equation*}
     \beta_v(f_{1,v},f_{2,v}):=\frac{L(1,\eta_{T,v})\cdot L(1,\Pi_v,{\rm ad})}{\zeta_{v}(2)\cdot L(1/2,\Pi_v,\chi_v)}\int_{E_v^\times/F_v^\times}\chi_v(t_v)\frac{\langle\pi_v(t_v)f_{1,v},f_{2,v}\rangle_v}{\langle f_{1,v},f_{2,v}\rangle_v} d^\times t_v,
\end{equation*}
and $\langle\;,\;\rangle=\prod_v\langle\;,\;\rangle_v$ is the natural inner product attached to the usual Tamagawa measure 
of $B^\times\A_F^\times\backslash (B\otimes_F\A_F)^\times$
\begin{equation}\label{defpetersson}
    \langle f_1,f_2\rangle=\int_{B^\times\A_F^\times\backslash (B\otimes_F\A_F)^\times}f_1(g)f_2(g)d^\times g.
\end{equation}
The reader will find in \S \ref{classWalds} more details on these classical formulas.

Waldspurger formula has many very important arithmetic applications. When $F=\Q$, $\Pi$ is associated with an elliptic curve, $E/\Q$ is imaginary and $B$ is  definite, one can relate the corresponding period integrals with the reduction of certain points in the elliptic curve called Heegner points. This has been used to relate certain Euler systems with critical values of the $L$-function $L(s,\Pi)$, providing as a consequence rank zero instances of the Birch and Swinnerton-Dyer conjecture. Indeed, one can think of Waldspurger formula as a rank zero counterpart of the celebrated Gross and Zagier formula, which is one of the most important breakthroughs towards the Birch and Swinnerton-Dyer conjecture.

Let $\Sigma_B$ be the set of split archimedean places of $B$, namely, the set of archimedean completions $F_\sigma\subseteq\C$ where $B\otimes_{F} F_\sigma=\M_2(F_\sigma)$, and write $r=\#\Sigma_B$. The Harder-Eicher-Shimura isomorphism realizes the automorphic representation $\pi$ in the $r$-cohomology space of certain arithmetic groups attached to our quaternion algebra $B$. We proceed to give a more precise description of such cohomology spaces: We will assume throughout this paper that $\pi$ has trivial central character. Let $G$ be the algebraic group associated to $B^\times/F^\times$, hence, $\pi$ can be seen as a representation of $G(\A_F)$. For any $\underline{k}-2=(k_{\tilde\sigma}-2)_{\tilde\sigma}\in (2\N)^{[F:\Q]}$, let us consider the $\C$-vector spaces
\[
V(\underline{k}-2):=\bigotimes_{{\tilde\sigma}:F\hookrightarrow\C}{\rm Sym}^{k_{\tilde\sigma}-2}(\C^2).
\]
Then $V(\underline{k}-2)$ is endowed with a natural action of $G(F)$. For any open compact subgroup $U\subset G(\A_F^\infty)$ of the finite adelic points of $G$, we consider the group cohomology spaces
\begin{equation*}
H^r(G(F)_+,\cA^\infty(V(\underline{k}-2))^U)=\bigoplus_{g\in {\rm Pic}(U)} H^r(\Gamma_g,V(\underline{k}-2)), \qquad {\rm Pic}(U)=G(F)_+\backslash G(\A_F^\infty)/U,
\end{equation*}
where $\Gamma_g=G(F)_+\cap gUg^{-1}$ and $G(F)_+$ is the subgroup of elements with positive norm at all real places. 
The Harder-Eichler-Shimura morphism realizes the automorphic representation $\pi^\infty=\pi\mid_{G(\A_F^\infty)}$ of weight $\underline{k}$ in 
\begin{equation*}
H^r_\ast(G(F)_+,\cA^\infty(V(\underline{k}-2)))^{\lambda}=\bigcup_{U\subset G(\A_F^\infty)}H^r(G(F)_+,\cA^\infty(V(\underline{k}-2))^U)^{\lambda},
\end{equation*}
where the superindex given by a fixed character $\lambda:G(F)/G(F)_+\rightarrow\pm 1$ stands for the corresponding $\lambda$-isotypical component. Such a realization is provided by a Hecke-equivariant morphism 
\[
{\rm ES}_\lambda:M_{\underline k}(U)\longrightarrow H^r(G(F)_+,\cA^\infty(V(\underline{k}-2))^U)^\lambda,
\]
where $M_{\underline k}(U)$ is the space of modular forms for $G$ of weight ${\underline k}$ and level $U$ (see definition \ref{defAFweightk}). 

For many arithmetic applications, it is more convenient to consider the realization of $\pi^\infty$ in the above cohomology spaces, 
rather than in the space of automorphic forms. Indeed, since the finite dimensional complex vector spaces $V(\underline{k}-2)$ admit rational models, one can show that $\pi^\infty$ is in fact the extension of scalars of a representation defined over a number field, called the coefficient field of $\pi$. 

On the other hand, canonical homology classes associated to quadratic extensions $E/F$ can be defined using the group of relative units of $E^\times$. Such fundamental classes lie in an $r$-th-homology group ($r=\#\Sigma_B$) if we make the following hypothesis:
\begin{assumption}\label{assuSigmaSigma}
The set $\Sigma_B$ coincides with the set of archimedean places $\sigma$ where $E$ splits. We will write 
\[
\Sigma_B=\Sigma_T^\R\cup\Sigma_T^\C;\qquad \Sigma_T^\R=\{\sigma\mid\infty;\;E_\sigma=\R\times\R\},\qquad\Sigma_T^\C=\{\sigma\mid\infty;\;F_\sigma=\C\},
\]
and $r_\R=\#\Sigma_T^\R$, $r_\C=\#\Sigma_T^\C$ with $r=r_\R+r_\C$.
\end{assumption}
Throughout the paper the previous assumption will be fulfilled, hence, we will be able to construct a fundamental class 
\[
\eta\in H_r(T(F)_+,C^0_c(T(\A_F^\infty),\Z)), 
\]
where $T$ is the algebraic group associated with $E^\times/F^\times$, $C^0$ denotes the subspace of locally constant functions, $C^0_c$ those with compact support, and $T(F)_+$ is again the subgroup of elements with positive norm at all real places. 

As pointed out above, there are many arithmetic applications where both a cohomology class $\phi^\lambda\in\pi^\infty\subseteq {\rm Im}({\rm ES}_\lambda)$ and the fundamental class $\eta$ play a very important role. For instance, in \cite{Darmon2001}, \cite{guitart2017automorphic}, \cite{fornea2021plectic} and \cite{HerMol1}, classes $\phi^\lambda$ and $\eta$ in certain precise situations are used to construct $p$-adic points in elliptic curves over $F$. These are the classical Stark-Heegner points in rank one situations, and the more recent plectic points constructed in higher rank situations. It is conjectured that these $p$-adic points are, in fact, defined over precise abelian extensions of $E$. Indeed, much progress towards the algebraicity of Stark-Heegner points over $\Q$ has been made recently in \cite{BDRSV}. 
The strategy of \cite{BDRSV} is based on establishing a connection between these points and other key objects that are the $p$-adic L-functions.
An anticyclotomic $p$-adic L-function is a $p$-adic avatar of the classical L-function $L(s,\Pi,\chi)$. In \cite{fornea2021plectic}, \cite{HerMol1} and \cite{HM2} anticyclotomic $p$-adic L-functions are constructed using $\phi^\lambda$ and $\eta$. The link between such $p$-adic L-functions and the classical  $L(s,\Pi,\chi)$ is manifested with the so-called interpolation formulas. In fact, the main theorem of this paper, theorem \ref{mainTHMintro}, is used in  \cite{HerMol1} and \cite{HM2} to obtain explicit interpolation formulas. There is a close relationship between Stark-Heegner and plectic points and anticyclotomic $p$-adic L-functions. Indeed in \cite{Bertolini1998}, \cite{fornea2021plectic}, \cite{HerMol1} and \cite{HM2} 
several formulas relating these special points and higher derivatives of the $p$-adic L-functions are obtained in the spirit of the Birch and Swinnerton-Dyer conjecture.

The embedding $E\hookrightarrow B$ provides an inclusion $T\hookrightarrow G$ as algebraic groups. Moreover, we have a natural $T(F)_+$-equivariant pairing maintaining $\lambda$-isotypical components (see remark \ref{canonicalphi} for further details)
\begin{equation}\label{introvarphi}
\varphi:\cA^\infty(V(\underline{k}-2))\times \left(C^0(T(\A_F^\infty),\C)\otimes V(\underline{k}-2)\right)\longrightarrow C^0(T(\A_F),\C).    
\end{equation}
We will regard $C^0(T(\A_F^\infty),\C)\otimes V(\underline{k}-2)$ as a subspace of locally polynomial functions in $T(\A_F)$. Since the Tamagawa measure on $T(\A_F)$ provides a pairing between $C^0(T(\A_F),\C)$ and $C_c^0(T(\A_F^\infty),\Z)$,  for any $T(F)$-invariant locally polynomial character $\chi\in H^0(T(F),C^0(T(\A_F^\infty),\C)\otimes V(\underline{k}-2))$, we can consider the cup product
\[
\cP(\phi^\lambda,\chi):=\varphi(\phi^\lambda\cup\chi)\cap\eta\in \C.
\]
We can think of $\cP(\phi^\lambda,\chi)$ as an analogue in higher cohomology of the period integral $\ell(f,\chi)$. In fact, if $\Sigma_B=\emptyset$ both concepts coincide. Thus, it is natural to wonder if there is a formula analogous to that of \eqref{WFintro} computing $\cP(\phi^\lambda,\chi)$, and that is precisely what main result of this paper is about. We will give a formula for $\cP(\phi^\lambda,\chi)$ in an explicit form, rather than express it in terms of local factors $\beta_v$ as in \eqref{WFintro}. 
Indeed, one can find in the literature explicit versions of \eqref{WFintro} where the local factors $\beta_{v}(f_{1,v},f_{2,v})$ are computed at the cost of specifying the forms $f_i$ 
(see  \cite[theorem 1.8]{CST}). Let $N$ be the conductor of $\pi$ and let $c$ be the conductor of $\chi$.
Although our main result is completely general, to avoid technical details in this introduction we will restrict ourselves to the case that, for all finite places $v$, either $\ord_v(c)\geq \ord_v(N)$ or $\ord_v(c)=0$ with $\ord_v(N)\leq 1$ if $E_v$ is non-split. Under this hypothesis the presentation of the result is much simplified. If for all finite places $v$ the local root numbers $\epsilon(1/2,\pi_v,\chi_v)=\eta_{T,v}(-1)\epsilon(B_v)$, where $\epsilon(B_v)=1$ if $B_v$ is a matrix algebra and $\epsilon(B_v)=-1$ otherwise, then we can define an (admissible) Eichler order $\cO_N\subset B$ of discriminant $N$ such that
 the space of $U=\hat\cO_N^\times$-invariant elements of $\pi^\infty$ is one dimensional. 
If this is the case, we will fix non-zero $\phi_{0}^\lambda\in (\pi^\infty)^U$. 
In \S \ref{classWalds} we will introduce natural pairings 
\[
\langle\;,\;\rangle: M_{\underline k}(U)\times M_{\underline k}(U)\longrightarrow \C,
\]
closely related with the classical Petersson inner product (see remark \ref{classesASHilbertform}). At this point we have all the ingredients to state a simplified version of the main result of the paper (theorem \ref{THMwaldsHC1}):
\begin{theorem}\label{mainTHMintro}
Assume that 
$\chi:T(\A_F)/T(F)\rightarrow\C^\times$ is a locally polynomial character of conductor $c$ such that  \[
\chi\mid_{T(F_\infty)}(t)=\chi_0(t)\prod_{\tilde\sigma:F\hookrightarrow\C}t_{\tilde\sigma}^{m_{\tilde\sigma}},\qquad \underline{m}=(m_{\tilde\sigma})\in \Z^{[F:\Q]},\qquad \frac{2-k_{\tilde\sigma}}{2}\leq m_{\tilde\sigma}\leq\frac{k_{\tilde\sigma}-2}{2},
\] 
for some locally constant character $\chi_0$. 
Given 
$\phi^\lambda
\in\pi^\infty\subset H_\ast^r(G(F),\cA^\infty(V(\underline{k}-2))(\lambda))$ ,
we have $\cP(\phi^\lambda,\chi)=0$ unless $\chi_{0}=\lambda$ and the root number $\epsilon(1/2,\pi_v,\chi_v)=\chi_v\eta_{T,v}(-1)\epsilon(B_v)$, for all $v\nmid\infty$. Moreover, if this is the case and $\phi_1^\lambda$ and $\phi_2^\lambda$ differ from $\phi_{0}^\lambda$ in a finite set of places $\mathfrak{S}$ then  
\begin{equation}\label{eqmainintro}
\cP(\phi_1^\lambda,\chi)\cdot \cP(\phi_2^\lambda,\chi^{-1})=\frac{2^{\#S_D}L_{c}(1,\eta_{T})^2h^2
C(\underline k,\underline m)}{|c^2 D|^{\frac{1}{2}}}\cdot L^S(1/2,\Pi,\chi)\cdot\frac{\langle \Phi_1,\Phi_2\rangle}{\langle \Psi,\Psi\rangle}\cdot\frac{{\rm vol}(U_0(N))}{{\rm vol}(U)}\prod_{v\in\mathfrak{S}}\frac{\beta_{v}(\phi_{1,v}^\lambda,\phi_{2,v}^\lambda)}{\beta_{v}(\phi_{0,v}^\lambda,\phi_{0,v}^\lambda)},
\end{equation}
where $S:=\{v\mid (N,c)\}$, $S_D:=\{v\mid (N,D);\;\ord_v(c)=0\}$, $L^S(s,\Pi,\chi)$ is the L-function with the local L-functions at places in $S\cup\infty$ removed, and $L_{c}(s,\eta_{T})$ is the product of the local L-functions at places dividing $c$, 
    \[
    C(\underline k,\underline m)=(-1)^{\left(\sum_{\sigma\not\in\Sigma_B}\frac{k_{\sigma}-2}{2}\right)}4^{r_\R}\cdot(32\pi)^{r_\C}\left(\frac{1}{\pi}\right)^{d-r-r_\C}\prod_{\tilde\sigma:F\hookrightarrow\C}\frac{\Gamma(\frac{k_{\tilde\sigma}}{2}-m_{\tilde\sigma})\Gamma(\frac{k_{\tilde\sigma}}{2}+m_{\tilde\sigma})}{(-1)^{m_{\tilde\sigma}}(2\pi)^{k_{\tilde\sigma}}},
    \]
    $U_0(N)\subset\PGL_2(\A_F)$ is the usual open compact subgroup of upper triangular matrices modulo $N$, $\Psi\in M_{\underline k}(U_0(N))$ is a normalized form generating $\Pi$,
    the forms $\Phi_i$ are such that ${\rm ES}_\lambda(\Phi_i)=\phi_i^\lambda$
and $h$ is the cardinal of the subgroup in $T(\cO_F)$ of torsion elements with positive norm at all archimedean places. 
\end{theorem}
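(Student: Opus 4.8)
The plan is to transport the cohomological period $\cP(\phi^\lambda,\chi)$ through the Harder--Eichler--Shimura morphism to a classical toric period, at the cost of an explicit archimedean factor, and then to invoke the explicit form of Waldspurger's formula recalled in \S\ref{classWalds}. The starting point is to unwind the definition $\cP(\phi^\lambda,\chi)=\varphi(\phi^\lambda\cup\chi)\cap\eta$ using the concrete description of the fundamental class $\eta$. Analogously to the decomposition of the $G$-cohomology displayed above, $H_r(T(F)_+,C^0_c(T(\A_F^\infty),\Z))$ decomposes into the fundamental classes of the lattices of relative units of positive norm inside $T(F_\infty)$; under Assumption \ref{assuSigmaSigma} these lattices have rank exactly $r=r_\R+r_\C$ and torsion subgroup of order $h$. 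Capping $\varphi(\phi^\lambda\cup\chi)$ against $\eta$ and pairing with the Tamagawa measure then produces an integral over $T(\A_F)/T(F)$ whose finite-adelic part is precisely the Waldspurger period $\ell$ attached to the finite parts of $\phi_1^\lambda,\phi_2^\lambda$, multiplied by an archimedean contribution: the degree-$r$ cocycle $\phi^\lambda$ is evaluated on the generators of the relative-unit lattice, which at each $\sigma\in\Sigma_B$ gives an Eichler--Shimura-type local period integral whose shape depends on whether $\sigma\in\Sigma_T^\R$ or $\sigma\in\Sigma_T^\C$.

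The second step is to compute these archimedean local factors explicitly. Using the realization of $V(\underline k-2)=\bigotimes_{\tilde\sigma}{\rm Sym}^{k_{\tilde\sigma}-2}(\C^2)$ and the normalization of $\mathrm{ES}_\lambda$, the pairing of the archimedean component of $\phi^\lambda$ with the torus cycle twisted by $\chi\mid_{T(F_\infty)}(t)=\chi_0(t)\prod_{\tilde\sigma}t_{\tilde\sigma}^{m_{\tilde\sigma}}$ reduces at each archimedean place of $\Sigma_B$ to an elementary integral of Mellin (beta/gamma) type --- a period-polynomial coefficient at a real place and its two-variable analogue at a complex place --- which converges exactly because $\underline m$ lies in the stated critical range. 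Evaluating these integrals yields the local factors $\Gamma(\tfrac{k_{\tilde\sigma}}{2}-m_{\tilde\sigma})\Gamma(\tfrac{k_{\tilde\sigma}}{2}+m_{\tilde\sigma})\big/\big((-1)^{m_{\tilde\sigma}}(2\pi)^{k_{\tilde\sigma}}\big)$, the powers $4^{r_\R}$, $(32\pi)^{r_\C}$, $(1/\pi)^{d-r-r_\C}$, and the global sign $(-1)^{\sum_{\sigma\notin\Sigma_B}(k_\sigma-2)/2}$, which together assemble into $C(\underline k,\underline m)$.

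The vanishing dichotomy is read off at this stage and from the isotypic decomposition: equivariance for the sign character $\lambda$ forces the locally constant part $\chi_0$ to equal $\lambda$, for otherwise the archimedean pairing already vanishes; and the local Tunnell--Saito theorem, applied place by place to the finite toric integrals, forces $\cP(\phi^\lambda,\chi)=0$ unless $\epsilon(1/2,\pi_v,\chi_v)=\chi_v\eta_{T,v}(-1)\epsilon(B_v)$ for all $v\nmid\infty$. With the archimedean factor split off, what remains is the classical bilinear toric period of the forms $\Phi_1,\Phi_2$ (with $\mathrm{ES}_\lambda(\Phi_i)=\phi_i^\lambda$) divided by their Petersson product, so the explicit Waldspurger formula --- in the shape of \eqref{WFintro}, evaluated on the chosen test vectors as in \cite{CST} --- supplies $L^S(1/2,\Pi,\chi)$ together with the ratio $\prod_{v\in\mathfrak S}\beta_v(\phi_{1,v}^\lambda,\phi_{2,v}^\lambda)/\beta_v(\phi_{0,v}^\lambda,\phi_{0,v}^\lambda)$; the factor $\langle\Phi_1,\Phi_2\rangle/\langle\Psi,\Psi\rangle$ and the volume ratio $\vol(U_0(N))/\vol(U)$ enter when transporting the $B^\times$-side inner product to the normalized $\GL_2$-form $\Psi$ at level $U_0(N)$ via Jacquet--Langlands, while the remaining constants --- $L_c(1,\eta_T)^2$, the conductor factor $|c^2D|^{-1/2}$, the $2^{\#S_D}$ coming from the places dividing $(N,D)$ with $\ord_v(c)=0$, and $h^2$ --- are produced by the explicit local toric integrals at $v\notin\mathfrak S$, where the hypotheses on the conductors of $\pi$ and $\chi$ and the choice of the Eichler order $\cO_N$ make these integrals computable.

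I expect the main obstacle to be precisely the explicit archimedean cap-product computation: identifying the Eichler--Shimura-realized cocycle on the relative-unit generators and integrating it against the monomial part $t_{\tilde\sigma}^{m_{\tilde\sigma}}$ of $\chi$, most delicately at the complex places of $\Sigma_T^\C$, together with the careful bookkeeping of signs, powers of $2$ and $\pi$, and Tamagawa-measure normalizations required to match $C(\underline k,\underline m)$ and the numerical constants in \eqref{eqmainintro} exactly.
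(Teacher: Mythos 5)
Your overall architecture matches the paper's: reduce the cap product to a global toric period via the structure of the fundamental class $\eta$, then invoke the explicit Waldspurger and Petersson formulas of \cite{CST} together with explicit archimedean computations. One point in your first two paragraphs is misstated and would mislead in execution: capping against $\eta$ does not split the result into a finite-adelic period times per-place archimedean contributions, nor does it involve evaluating $\phi^\lambda$ on lattice generators to produce local integrals. Rather (Theorem \ref{mainTHM1}, built on Propositions \ref{partcapeta} and \ref{propESpart}) one obtains the exact identity
\[
\cP(\phi^\lambda,\chi)=h\cdot \ell\bigl(\Phi(\underline{\delta s}_\lambda(\mu_{\underline m})),\chi\bigr),
\]
that is, $h$ times a single global toric period of an automorphic form whose archimedean component is pinned down by applying $\underline{\delta s}_\lambda$ to the vector $\mu_{\underline m}$. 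The Gamma factors assembling into $C(\underline k,\underline m)$ therefore do not appear at the cap-product stage; they emerge only after applying Waldspurger's factorization to the product of two such periods and computing the local integrals $\beta_\sigma$ and Whittaker pairings $\alpha_\sigma$ for that specific archimedean test vector (Theorems \ref{calcLI1}, \ref{calcLI2} and the surrounding lemmas). Relatedly, $h^2$ is just the square of the factor $h$ in the displayed identity, not a contribution from the finite local toric integrals at $v\notin\mathfrak{S}$. With these two corrections your plan becomes exactly the paper's chain Theorem \ref{mainTHM1} $\Rightarrow$ Proposition \ref{WaldsforPhi2} $\Rightarrow$ Lemma \ref{lemexpli} $\Rightarrow$ Theorem \ref{THMwaldsHC1}.
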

In order to prove this formula, we will relate the value of $\cP(\phi^\lambda,\chi)$ to a period $\ell(f_{\phi^\lambda},\chi)$, for some automorphic form $f_{\phi^\lambda}\in \pi$ associated with $\phi^\lambda$ via the explicit description of the Harder-Eicher-Shimura morphism given in \cite{ESsanti}, and then we will apply the classical Waldspurger formula. We would like to highlight that, to obtain our result one has to compute the local terms $\beta_\sigma(f_{\phi_2^\lambda,\sigma},f_{\phi_1^\lambda,\sigma})$ for all archimedean places $\sigma$. To accomplish this, one needs the deep analysis of the archimedean representation $\pi_\sigma$ performed in \S \ref{loctheory}.

In addition to what was mentioned above regarding interpolation properties for anticyclotomic $p$-adic L-functions, we believe that our formula can have other interesting arithmetic applications.
In fact, it has been recently used in \cite{Venk} to show algebraicity of Stark-Heegner cycles associated to Bianchi modular forms that are base-change of classical elliptic cuspforms. Moreover, the formula can also be used to explore rationality of critical values of L-functions. Indeed, in case that $F$ is totally real and $G=\PGL_2$, by assumption \ref{assuSigmaSigma} the field $E$ must also  be totally real. 
Since all Eichler orders of level $N$ are locally conjugated, we have in this situation that $U=\gamma U_0(N)\gamma^{-1}$ for some $\gamma\in \PGL_2(\A_F^\infty)$, hence we can choose $\Phi=\Phi_1=\Phi_2=\gamma\Psi$. We will see that in this situation $\Psi$ corresponds to a normalized Hilbert newform (see remark \ref{classesASHilbertform}). Furthermore, in \cite{ESsanti} we prove that ${\rm ES}_\lambda(\Psi)$ coincides the classical construction of the Eichler-Shimura isomorphism (see remark \ref{difES}). Thus, by the theory of modular symbols 
\[
\frac{{\rm ES}_\lambda(\Psi)}{\Omega_\lambda}\in H^r(G(F)_+,\cA^\infty(V(\underline{k}-2)_{\bar\Q})^U)^\lambda=\bigoplus_{g\in {\rm Pic}(U)} H^r(\Gamma_g,V(\underline{k}-2)^\lambda_{\bar\Q});\qquad V(\underline{k}-2)_{\bar\Q}:=\bigotimes_{{\tilde\sigma}:F\hookrightarrow\bar\Q}{\rm Sym}^{k_{\tilde\sigma}-2}(\bar\Q),
\]
where $\Omega_\lambda$ is the classical period associated with $\Psi$. 
Since the morphism $\varphi$ of \eqref{introvarphi} preserves models over $\bar\Q$, we deduce using \eqref{eqmainintro} that if  $\chi\mid_{T(F_\infty)}(t)=\lambda(t)\cdot\prod_\sigma t^{m_\sigma}$ then
\[
\Omega_\lambda^{-2}\cdot \cP({\rm ES}_\lambda(\Phi),\chi)\cdot \cP({\rm ES}_\lambda(\Phi),\chi^{-1})\in\bar\Q;\qquad \mbox{and therefore}\qquad \Omega_\lambda^{-2}\cdot L(1/2,\Pi,\chi)\cdot\prod_\sigma(2\pi)^{-k_\sigma}\in\bar\Q.
\]
Although this result preceded by the work of Shimura over $\Q$ 
was already known (see for example \cite{JST}), 
one can try to replicate the above argument for other quaternion algebras. 
Indeed, if $G\neq\PGL_2$ one can normalize the $U$-invariant $\Phi\in (\pi^\infty)$ so that $\langle\Phi,\Phi\rangle=\langle\Psi,\Psi\rangle$, and define the period $\Omega_\lambda^G$ in such a way that ${\rm ES}_\lambda(\Phi)/\Omega_\lambda^{G} \in H^r(G(F)_+,\cA^\infty(V(\underline{k}-2)_{\bar\Q})^U)^\lambda$. Thus, if we assume that $\chi=\chi_F\circ{\rm Norm}_{E/F}$, for some quadratic character $\chi_F:\A_F^\times/F^\times\rightarrow\pm 1$, then by \eqref{eqmainintro}
\[
(\Omega_\lambda^G)^{-2}\cdot C(\underline k,0)\cdot L(1/2,\Pi,\chi)=(\Omega_\lambda^G)^{-2}\cdot C(\underline k,0)\cdot L(1/2,\Pi,\chi_F)\cdot L(1/2,\Pi,\chi_F\eta_T)\in\bar\Q,
\]
where the second equality follows from Artin formalism. In the work in progress \cite{GM} we expect to use the aforementioned link between $L(1/2,\Pi,\chi_F)$ and $\Omega_\lambda$ to obtain a correlation between $\Omega_\lambda^G$ and $\Omega_\lambda$. This could lead to new instances of the Oda conjecture stated in \cite{Oda}.

\subsection*{Acknowledgements.}
This project has received funding from the European Research Council
(ERC) under the European Union's Horizon 2020 research and innovation
programme (grant agreement No. 682152), and the projects PID2021-124613OB-I00 and PID2022-137605NB-I00 from Ministerio de Ciencia e innovaci\'on. I would also like to thank Guhan Venkat for his interesting comments.

\subsection{Notation}\label{not}

We write $\hat\Z:=\prod_{\ell}\Z_\ell$ and $\hat R:=R\otimes\hat\Z$, for any ring $R$.
Let $F$ be a number field with integer ring $\cO_F$, and let $\A_F$ and $\A_F^\infty=\hat\cO_F\otimes\Q$ be its ring of adeles and finite adeles, respectively. For any place $v$ of $F$, we write $F_v$ for the completion at $v$. Write $\Sigma_F$ for the set of archimedean places of $F$. We will sometimes write $\sigma\mid \infty$ to denote $\sigma\in\Sigma_F$. For any embedding $\tilde\sigma:F\hookrightarrow\C$ whose equivalence class is $\sigma\in\Sigma_F$, we will write $\tilde\sigma\mid\sigma$. We define 
\[
\Z^{\Sigma_F}:=\left\{\underline{k}=(k_\sigma)_{\sigma\mid\infty}; k_\sigma=(k_{\tilde\sigma})_{\tilde\sigma\mid\sigma}\in \Z^{[F_\sigma:\R]}\right\}\simeq\Z^{[F:\Q]}.
\]
Given $\underline{k}\in\Z^{\Sigma_F}$ and $x\in F_\infty=\prod_{\sigma\mid\infty}F_\sigma$, we write 
\[
x^{\underline{k}}:=\prod_{\sigma\mid\infty}\prod_{\tilde\sigma\mid\sigma} \tilde\sigma(x)^{k_{\tilde\sigma}}.
\]

Let $B$ be a quaternion algebra over $F$ with maximal order $\cO_B$.
Let $G$ be the algebraic group associated with the group of units of $B$ modulo scalars, namely, for any $\cO_F$-algebra $R$
\[
G(R):=(\cO_B\otimes_{\cO_F}R)^\times/R^\times.
\]
We denote by $\Sigma_B$ the set of split infinite places of $B$. For any $\sigma\mid\infty$, we write $G(F_\sigma)_+\subseteq G(F_\sigma)$ for the subgroup of elements of positive norm. We also write $G(F_\infty)_+:=\prod_{\sigma\mid\infty}G(F_\sigma)_+$ and $G(F)_+:=G(F_\infty)_+\cap G(F)$.

Assume that we have a fixed embedding $E\hookrightarrow B$ such that $\cO_{c_0}=E\cap \cO_B$ is an order of conductor $c_0$. We write 
\[
T(R):=(\cO_{c_0}\otimes_{\cO_F}R)^\times/R^\times.
\]
This implies that $T\subset G$ as algebraic groups. 

Write $\Sigma_T$ for the set of split infinite places of $E/F$. Thus,
\[
\Sigma_T=\Sigma_{T}^{\R}\cup\Sigma_T^\C,\qquad \Sigma_{T}^{\R}=\{\sigma\mid\infty,\;T(F_\sigma)=\R^\times\},\quad\Sigma_{T}^{\C}=\{\sigma\mid\infty,\;T(F_\sigma)=\C^\times\}.
\]
For any archimedean place $\sigma$, write $T(F_\sigma)_0\subset T(F_\sigma)$ to be the intersection of all connected subgroups $N$ for which the quotient $T(F_\sigma)/N$ is compact. Write also $T(F_\sigma)_+$ for the connected component of $1$ in $T(F_\sigma)$. We can visualize $T(F_\sigma)$, $T(F_\sigma)_+$, $T(F_\sigma)_0$ depending on the ramification type of $\sigma$ in the following table:
\[\]
\begin{center}
\begin{tabular}{ c| c c c c }
ramification &$T(F_\sigma)$ & $T(F_\sigma)_+$&$T(F_\sigma)_0$&$T(F_\sigma)/T(F_\sigma)_0$ \\ 
 \hline
 $\Sigma_{T}^{\R}$ & $\R^\times$ & $\R_+$& $\R_+$& $\pm 1$ \\  
 $\Sigma_{T}^{\C}$ & $\C^\times$ & $\C^\times$ & $\R_+$ & $S^1$ \\
  $\infty\setminus\Sigma_{T}$ & $\C^\times/\R^\times$ & $\C^\times/\R^\times$ & $1$ & $\C^\times/\R^\times$ 
\end{tabular}
\end{center}
\[\]

Given $t_\infty\in T(F_\infty)=E^\times_\infty/F^\times_\infty$, write $\tilde t_\infty\in E_\infty^\times$ for a representative. If $E_\sigma/F_\sigma$ splits, write $\lambda(\tilde t_\sigma)_1, \lambda(\tilde t_\sigma)_2 \in F_\sigma^\times$ for the two components of $\tilde t_\sigma\in F_\sigma^\times\times F_\sigma^\times$. If $E_\sigma/F_\sigma$ does not split, write $\lambda(\tilde t_\sigma)_1, \lambda(\tilde t_\sigma)_2 \in E_\sigma^\times $ for the image of $\tilde t_\sigma$ under the two $F_\sigma$-isomorphisms. In this last non-split case, we choose an embedding $\tilde\sigma_E:E_\sigma\rightarrow\C$ above $\tilde\sigma$. For any $\underline{m} \in	\Z^{\Sigma_F}$, we write
\[%
t_\infty^{\underline{m}}:=\prod_{\sigma\mid\infty}\prod_{\tilde\sigma\mid\sigma}\tilde\sigma_E\left(\frac{\lambda(\tilde t_\sigma)_1}{\lambda(\tilde t_\sigma)_2}\right)^{m_{\tilde\sigma}}\in\C.
\]

If $v$ is non-archimedean, write $\cO_{F_v}$ for the  integer ring of $F_v$, let ${\rm ord}_v(\cdot)$ be its valuation, $\kappa_v$ its residue field, $\varpi_v$ a fixed uniformizer, $d_{F_v}$ its different over $\Q_p$ and $q_v=\#\kappa_v$.

\subsection{Haar Measures}\label{haarmeasures}
For any number field $F$, let us consider the additive character $\psi:\A_F/F\rightarrow\R$,
\begin{equation}\label{defpsiad}
    \psi=\prod_v\psi_v,\qquad  \psi_v(a)=\left\{\begin{array}{lc}
       e^{2\pi i a},  &F_v=\R;  \\
        e^{2\pi i{\rm Re}(a)}, &F_v=\C;\\
        e^{-2\pi i[{\rm Tr}_{F_v/\Q_p}(a)]},&F_v/\Q_p;
    \end{array}\right.
\end{equation}
where $[\cdot]:\Q_p/\Z_p\hookrightarrow\Q/\Z$ is the natural inclusion. Let $dx_v$ be the Haar measure of $F_v$ normalized so that it is self-dual with respect to $\psi_v$, namely, $\hat{\hat\phi}(x_v)=\phi(-x_v)$, where $\hat\phi$ is the Fourier transform $\hat\phi(y_v)=\int_{F_v}\phi(x_v)\psi_v(x_vy_v)dx_v$.
Notice that $dx_v$ is $[F_v:\R]$ times the usual Lebesgue measure, if $v|\infty$, and the Haar measure satisfying ${\rm vol}(\cO_{F,v})=|d_{F_v}|_v^{1/2}$, if $v\nmid\infty$.
In any of these cases, we define $d^\times x_v=\zeta_v(1)|x_v|_v^{-1}dx_v$,
where $\zeta_v(s)=(1-q_v^{-s})^{-1}$, if $v\nmid\infty$, $\zeta_v(s)=\pi^{-s/2}\Gamma(s/2)$, if $F_v=\R$, and $\zeta_v(s)=2(2\pi)^{-s}\Gamma(s)$, if $F_v=\C$. One easily checks that, if $v$ is non-archimedean then ${\rm vol}(\cO_{F_v}^\times)=|d_{F_v}|_v^{1/2}$ as well.
The product of $d^\times x_v$ provides a Tamagawa measure $d^\times x$ on $\A_F^\times/F^\times$. In fact, such Haar measure satisfies
\[
{\rm Res}_{s=1}\int_{x\in\A_F^\times/F^\times,\;|s|\leq 1}|x|^{s-1}d^\times x={\rm Res}_{s=1}\Lambda_F(s),
\]
where $\Lambda_F(s)=\zeta_F(s)\prod_{v\mid\infty}\zeta_{v}(s)$ is the completed Riemann zeta function associated with $F$. This implies that, if we choose $d^\times t$ to be the quotient measure for $T(\A_F)/T(F)=\A_E^\times/\A_F^\times E^\times$, one has that ${\rm vol}(T(\A_F)/T(F))=2L(1,\eta_T)$.

Let us consider the Haar measure $db_v$ of $B_v$ which is self-dual with respect to $\psi_v$, in the sense that $\hat{\hat\phi}(b_v)=\phi(-b_v)$, where $\hat\phi$ is the Fourier transform $\hat\phi(a_v)=\int_{B_v}\phi(b_v)\psi_v(a_v\bar b_v+b_v\bar a_v)db_v$ and $(b_v\mapsto\bar b_v)$ is the usual involution on $B_v$. We define similarly as above
$d^\times g_v=\zeta_v(1)|g_v\bar g_v|_v^{-1}dg_v$. The product of such $d^\times g_v$ provides a Tamagawa measure for $G$ satisfying ${\rm vol}(G(F)\backslash G(\A_F))=2$ and (see \cite[lemma 3.5]{CST})
\[
\begin{array}{ll}
   {\rm vol}(\PGL_2(\cO_{F_v}))=\zeta_v(2)^{-1}|d_{F_v}|_v^{3/2},  &v\nmid\infty,\quad B_v=\M_2(F_v);  \\
   {\rm vol}(\cO_{B_v}^\times/\cO_{F_v}^\times)=\zeta_v(2)^{-1}(q_v-1)^{-1}|d_{F_v}|_v^{3/2},  & v\nmid\infty,\quad B_v\neq\M_2(F_v);\\
   {\rm vol}(B_\sigma^\times/F_\sigma^\times)=2\pi^2,&\sigma\mid\infty,\quad B_\sigma\neq\M_2(F_\sigma);
\end{array}
\]
Moreover, if $\sigma\in\Sigma_B$ and $F_\sigma=\R$, the measure $d^\times g_\sigma$ corresponds to 
\[
d^\times g_\sigma=\frac{dxdyd\theta}{y^2},\qquad g_\sigma=\big(\begin{smallmatrix}1&x\\&1\end{smallmatrix}\big) \big(\begin{smallmatrix}y^{1/2}&\\&y^{-1/2}\end{smallmatrix}\big)\big(\begin{smallmatrix}\cos\theta&\sin\theta\\-\sin\theta&\cos\theta\end{smallmatrix}\big),\qquad \theta\in [0,\pi).
\]

\subsection{Finite dimensional representations}\label{findimreps}

Let $k$ be a positive even integer.
Let $\cP(k)={\rm Sym}^k(\C^2)$ be the space of polynomials in 2 variables homogeneous of degree $k$ with $\PGL_2(\C)$-action:
\[
\left(\left(\begin{array}{cc}a&b\\c& d\end{array}\right)P\right)(X,Y)=(ad-bc)^{-\frac{k}{2}}P(aX+cY,bX+dY),\qquad P\in\cP(k).
\]  
Let us denote $V(k)=\cP(k)^\vee$ with dual $\PGL_2(\C)$-action:
\[
(g\mu)(P)=\mu(g^{-1}P),\qquad \mu\in V(k).
\]  
Notice that $V(k)\simeq \cP(k)$ by means of the isomorphism
\begin{equation}\label{dualVP}
V(k)\longrightarrow\cP(k),\qquad\mu\longmapsto\mu((Xy-Yx)^k).
\end{equation}

\subsubsection{Polynomials and torus}

The fixed embedding $\iota:E\hookrightarrow B$ provides an isomorphism $B\otimes_FE\simeq{\rm M}_2(E)$ that we will fix throughout the article. Indeed, $\iota$ induces a decomposition $B=E\oplus EJ$, where $J$ normalizes $E$ and $J^2\in F^\times$. Hence, we have the corresponding embedding
\begin{equation}\label{embEinB}
    B\hookrightarrow\M_2(E);\qquad e_1+e_2J\longmapsto \left(\begin{array}{cc}
    e_1 & J^2e_2 \\
    \bar e_2 & \bar e_1
\end{array}\right),
\end{equation}
where $(e\mapsto\bar e)\in \Gal(E/F)$ denotes the non-trivial automorphism.
For a given $\bar\sigma:F\hookrightarrow\C$, the composition $B\hookrightarrow B\otimes_FE\simeq {\rm M}_2(E)$ together with the fixed extension $\bar\sigma_E:E\hookrightarrow\C$ of $\bar\sigma$, gives rise to an embedding $G(F_\sigma)\hookrightarrow\PGL_2(\C)$, if $\bar\sigma\mid\sigma\mid\infty$. This provides an action of $G(F_\infty)$ on $V(\underline k)$ and $\cP(\underline k)$, for any $\underline k=(k_{\bar\sigma})\in (2\N)^{\Sigma_F}$. 

The composition $E\stackrel{\iota}{\hookrightarrow}B\hookrightarrow B\otimes_FE\simeq{\rm M}_2(E)$ maps $e$ to $\big(\begin{smallmatrix} e&\\&\bar e\end{smallmatrix}\big)$. Thus, we have a $T(F_\infty)$-equivariant morphism
\begin{equation}\label{PtoCoverC}
    \cP(\underline k)\longrightarrow C(T(F_\infty),\C);\qquad \bigotimes_{\bar\sigma} P_{\bar\sigma}\longmapsto\ipa{(t_\sigma)_{\sigma\mid\infty}\mapsto\prod_{\sigma\mid\infty}\prod_{\bar\sigma\mid\sigma}P_{\bar\sigma}\ipa{1,\bar\sigma_E\ipa{\frac{t_\sigma}{\bar t_\sigma}}}\bar\sigma_E\ipa{\frac{t_\sigma}{\bar t_\sigma}}^{-\frac{k_{\bar\sigma}}{2}}}.
\end{equation}

\section{Fundamental classes}

Let $\Sigma\subseteq\Sigma_F$ be a set of archimedean places, write $T(F_\Sigma)_0:=\prod_{\sigma\in\Sigma}T(F_\sigma)_0$ and let us consider the space of continuous functions
\[
C^\Sigma(T(\A_F),\C)=\{f\in C(T(\A_F),\C),\;f\mid_{T(F_\infty)}\mbox{ is $\cC^\infty$ and $T(F_{\Sigma_F\setminus\Sigma})_0$-invariant}\},
\]
with its natural $T(F)$-action.

For any $\sigma\in \Sigma_T$, the space $T(F_\sigma)$ is a $\R$-Lie group of dimension 1 or 2 depending if $\sigma\in \Sigma_T^\R$ or $\Sigma_T^\C$. In each of the cases, we can identify $\cL_{\sigma}={\rm Lie}(T(F_\sigma))$ with $\R$ or $\C$. The element $\delta_\sigma:=1\in \cL_{\sigma}$ generates the Lie algebra of $T(F_\sigma)_0$. This claim is clear for $\sigma\in\Sigma_T^\R$,  while if $\sigma\in\Sigma_T^\C$ it follows from
\begin{equation}\label{defdelta}
\delta_\sigma f(re^{i\theta})=\left.\frac{d}{dt}f(re^{i\theta}e^{t})\right|_{t=0}=r\frac{\partial}{\partial r}f(re^{i\theta}), \qquad f\in C^{\Sigma_F}(T(F_\sigma),\C).
\end{equation}
Notice that, for any $\sigma\in\Sigma$, the following $T(F)$-equivariant sequence is exact
\begin{equation}\label{exseqCs}
0\longrightarrow C^{\Sigma\setminus\sigma}(T(\A_F),\C)\longrightarrow C^\Sigma(T(\A_F),\C)\stackrel{\delta_\sigma}{\longrightarrow} C^\Sigma(T(\A_F),\C)\longrightarrow 0.
\end{equation}

\begin{definition}\label{partialT}
The composition of the connection morphisms of the corresponding long exact sequences in cohomology provides a morphism
\begin{eqnarray*}
&\partial:& H^0(T(F),C^{\Sigma_T}(T(\A_F),\C))\longrightarrow H^1(T(F),C^{\Sigma_T\setminus\sigma_1}(T(\A_F),\C))\longrightarrow\cdots\\
&\cdots&\longrightarrow H^{u-1}(T(F),C^{\sigma_u}(T(\A_F),\C))\longrightarrow H^u(T(F),C^\emptyset(T(\A_F),\C)),
\end{eqnarray*}
where $u:=\#\Sigma_T$.
\end{definition}

\subsection{Fundamental classes}

By the Dirichlet Unit Theorem the $\Z$-rank of $T(\cO_F)$ is $u$.  Write 
\[
T(\cO_F)_+:=T(\cO_F)\cap T(F_\infty)_+,\qquad T(F_\infty)_+:=\prod_{\sigma\mid\infty}T(F_\sigma)_+.
\] 
Notice that $T(F_{\infty})_0$ is isomorphic to $\R^{u}$ by means of the homomorphism $z\mapsto (\log|\sigma z|)_{\sigma\in\Sigma_T}$. Moreover, under such isomorphism the image of $T(\cO_F)_+$ becomes a $\Z$-lattice $\Lambda$ of $\R^r$. Thus $T(F_{\infty})_0/\Lambda$ is a $u$-dimensional torus and the \emph{fundamental class} $\xi$ is a generator of $H_{u}(T(F_{\infty})_0/\Lambda,\Z)\simeq \Z$. 
Notice that
\[
T(F_\infty)_+= T(F_{\infty})_0\times S,
\] 
where $S$ is isomorphic to a cartesian product of $S^1$. Let $H=T(\cO_F)_+\cap S$. Since $T(\cO_F)_+$ is discrete and $S$ is compact, $H$ is finite. It is easy to check that $H$ is the torsion subgroup of $T(\cO_F)_+$ and $T(\cO_F)_+\simeq\Lambda\times H$ (see \cite[Lemma 3.1]{HerMol1}). In particular,
\begin{equation}\label{eqaster}
T(F_\infty)_+/T(\cO_F)_+\simeq T(F_{\infty})_0/\Lambda\times S/H.
\end{equation}

Since $M:=T(F_{\infty})_0\simeq\R^u$, the de Rham complex $\Omega^\bullet_M$ is a resolution for $\R$. This implies that we have an edge morphism of the induced spectral sequence
\[
e:H^0(\Lambda,\Omega_M^u)\longrightarrow H^u(\Lambda,\R).
\]
We can identify $c\in H_{u}(M/\Lambda,\Z)$ with a group cohomology element $c\in H_{u}(\Lambda,\Z)$ by means of the relation
\[
\int_c \omega=e(\omega)\cap c,\qquad \omega\in H^0(\Lambda,\Omega_M^u)=\Omega_{M/\Lambda}^u.
\]
In particular, we can think of the fundamental class as an element $\xi\in H_u(\Lambda,\Z)$ satisfying
\begin{equation}\label{eqeint}
e(\omega)\cap \xi=\int_{T(F_\infty)_0/\Lambda} \omega,\qquad \omega\in H^0(\Lambda,\Omega_M^u).
\end{equation}

Let us consider the compact subgroup $U:=T(\hat\cO_F)\subset T(\A_F^\infty)$, and write
\[
T(F)_+:=T(F)\cap T(F_\infty)_+,\qquad {\rm Cl}(T)_+:=T(\A_F^\infty)/UT(F)_+.
\]
Clearly, the class group $ {\rm Cl}(T)_+$ is finite and, since $T(\cO_F)_+=T(F)_+\cap U$, we have an exact sequence 
\[
0\longrightarrow T(F)_+/T(\cO_F)_+\longrightarrow T(\A_F^\infty)/U\stackrel{p}{\longrightarrow} {\rm Cl}(T)_+\longrightarrow 0.
\]
Fix preimages in $\bar t_i\in T(\A_F^{\infty})$ of all the elements $t_i\in {\rm Cl}(T)_+$ and consider the compact set
\[
\cF:=\bigcup_i\bar t_i U\subset T(\A_F^\infty).
\]
By construction $T(F_\infty)_+/T(\cO_F)_+\times\cF$ is a fundamental domain of $T(\A_F)$ for the action of $T(F)$. Indeed,
for any $t\in T(\A_F)$, there is a unique $\tau_t\in T(F)/T(\cO_F)_+$ such that $\tau_t^{-1}t\in T(F_{\infty})_+\times\cF$ (see \cite[Lemma 3.2]{HerMol1}). In particular,
\begin{equation}\label{eqaster2}
T(\A_F)=\bigsqcup_{\tau\in T(F)/T(\cO_F)_+}\tau(T(F_\infty)_+\times\cF).
\end{equation}
Notice that the set of continuous functions $C(\cF,\Z)$ has an action of $T(\cO_F)_+$ (since $\cF$ is $U$-invariant) and the characteristic function $1_\cF$ is $T(\cO_F)_+$-invariant. 
Let us consider the canonical class
\[
\eta=1_\cF\cap\xi\in H_u(T(\cO_F)_+,C(\cF,\Z));\qquad 1_\cF\in H^0(T(\cO_F)_+,C(\cF,\Z)),
\]  
where $\xi\in H_r(T(\cO_F)_+,\Z)$ is by abuse of notation the image of $\xi$ through the correstriction morphism.

By \eqref{eqaster2}, the natural $T(\cO_F)_+$-equivariant embedding 
\[
\iota:C(\cF,\Z)\hookrightarrow C_c^\emptyset(T(\A_F),\Z),\qquad \iota \phi(t_\infty,t^\infty) =1_{T(F_\infty)_+}(t_\infty)\cdot\phi(t^\infty)\cdot1_\cF(t^\infty).
\]
provides an isomorphism of $T(F)$-modules
$\Ind_{T(\cO_F)_+}^{T(F)}(C(\cF,\Z))\simeq C_c^\emptyset(T(\A_F),\Z)$,
where $C_c^\emptyset(T(\A_F),\Z)$ is the set of functions in $C^\emptyset(T(\A_F),\C)$ that are $\Z$-valued and compactly supported when restricted to $T(\A_F^\infty)$ (see \cite[Lemma 3.3]{HerMol1}).
Thus, by Shapiro's Lemma one may regard 
\[
\eta\in H_u(T(F),C_c^{\emptyset}(T(\A_F),\Z)).
\]

\subsection{Pairings and fundamental classes}
Notice that, by our choices in \S\ref{haarmeasures}, we have that ($v=\sigma\mid\infty$)
\[
        d^\times x_\sigma=\frac{dx_\sigma}{|x_\sigma|}=\pm \frac{dr}{r},\quad F_\sigma=\R,\;r=|x_\sigma|;\qquad 
        d^\times x_\sigma=\frac{2dsdt}{\pi(s^2+t^2)}=\frac{2drd\theta}{\pi r},\quad F_\sigma=\C,\;x_v=s+it=re^{i\theta}.
\]
This implies that the restriction of $dt_\sigma^\times$ on $T(F_\sigma)_+$ is given by
\begin{equation}\label{defdtx}
    d^\times t_\sigma=\left\{\begin{array}{ll}
         r^{-1}dr,& T(F_\sigma)_+=\R_+,\quad r\in (0,\infty);\\
         \pi^{-1}2d\theta\cdot r^{-1}dr,& T(F_\sigma)_+=\C^\times,\quad r\in (0,\infty),\quad\theta\in [0,2\pi);\\
         \pi^{-1}2d\theta,&T(F_\sigma)_+=\C^\times/\R^\times,\quad\theta\in [0,\pi).
    \end{array}\right.
\end{equation}
Hence, given the decomposition $T(F_\sigma)_+=T(F_\sigma)_0\times S$, we have that $d^\times t_\sigma=d^\times t_{\sigma,0} d^\times s_\sigma$, where $d^\times t_{\sigma,0}=r^{-1}dr$ if $T(F_\sigma)_0\neq 1$. The product of $d^\times t_0:=\prod_{\sigma\in\Sigma_T}d^\times t_{\sigma,0}$ provides a Haar measure on $T(F_\infty)_0$. Similarly, the product $d^\times s:=\prod_{\sigma\mid\infty}d^\times s_{\sigma}$ provides a Haar measure on $S$.

On the other side, the product 
$d^\times t_f:=\prod_{v\nmid\infty}d^\times t_v$ provides a Haar measure on $T(\A_F^\infty)$. As seen in \S \ref{haarmeasures}, we have that 
\begin{equation}\label{volhatO}
    {\rm vol}(\hat\cO_E^\times/\hat\cO_F^\times)=\prod_{v\nmid\infty}{\rm vol}(\cO_{E_v}^\times){\rm vol}(\cO_{F_v}^\times)^{-1}=|d_F D|^{-\frac{1}{2}}
\end{equation}
where $d_{F}$ is the different of $F$. 

Given $\phi\in H^0(T(F),C^{\Sigma_T}(T(\A_F),\C))$, we can consider the complex number
\[
\partial\phi\cap \eta\in \C,
\]
where the cap product corresponds to the $T(F)$-invariant pairing
\[
C^\emptyset(T(\A_F),\C)\times C_c^\emptyset(T(\A_F),\Z)\longrightarrow \C \qquad
\langle f,\phi\rangle=\sum_{z\in T(F_\infty)/T(F_\infty)_+}\int_{S}\int_{T(\A_F^\infty)}f(zs,t_f)\phi(zs,t_f)d^\times t_f d^\times s 
\]

\begin{proposition}\label{partcapeta}
We have that
\[\partial\phi\cap \eta=h\int_{T(\A_F)/T(F)}\phi(t) d^\times t,\]
where $h=\#T(\cO_F)_{+,{\rm tors}}$.
\end{proposition}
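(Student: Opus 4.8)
The plan is to unwind the definitions on both sides and match them term by term. On the right-hand side, I would first decompose the Tamagawa integral over $T(\A_F)/T(F)$ using the fundamental domain \eqref{eqaster2}: writing $T(\A_F)=\bigsqcup_{\tau\in T(F)/T(\cO_F)_+}\tau(T(F_\infty)_+\times\cF)$ and using that $\phi$ is $T(F)$-invariant, the integral becomes $\int_{T(F_\infty)_+\times\cF}\phi(t)\,d^\times t$, up to the overcounting by the torsion subgroup; this is exactly where the factor $h=\#T(\cO_F)_{+,{\rm tors}}=\#H$ will enter, since the fundamental domain for $T(F)/T(\cO_F)_+$ wraps around the true $T(\A_F)/T(F)$ a total of $h$ times once we quotient $S$ by $H$ (compare \eqref{eqaster}). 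Then I would split $d^\times t = d^\times t_0\, d^\times s\, d^\times t_f$ according to the decomposition $T(F_\infty)_+=T(F_\infty)_0\times S$ and the archimedean/finite splitting, reducing the right-hand side to $h\int_{T(F_\infty)_0/\Lambda}\big(\int_S\int_{T(\A_F^\infty)}\text{(averaged }\phi)\big)\,d^\times t_0$ after folding the $\Lambda$-action back in.

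For the left-hand side, the key is to track the connecting morphism $\partial$ of Definition \ref{partialT} through the cap product with $\eta = 1_\cF\cap\xi$. Since $\partial$ is built from the connecting maps of the exact sequences \eqref{exseqCs}, each step being division by $\delta_\sigma$, and since by \eqref{defdelta} each $\delta_\sigma$ is the Lie-algebra generator acting as $r\,\partial/\partial r$, I would identify $\partial\phi$ with (a cocycle representing) the class obtained by successively "integrating" $\phi$ along the $u$ circle/line directions of $T(F_\infty)_0$. Concretely, on the subgroup $T(\cO_F)_+$, Shapiro's lemma reduces $\eta$ to $1_\cF\cap\xi$ with $\xi\in H_u(\Lambda,\Z)$ the fundamental class of the torus $M/\Lambda=T(F_\infty)_0/\Lambda$, characterized by \eqref{eqeint}: $e(\omega)\cap\xi=\int_{T(F_\infty)_0/\Lambda}\omega$ for top-degree forms $\omega$. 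The de Rham comparison is the bridge: the iterated connecting map $\partial$ corresponds under the edge morphism $e$ to contracting with the invariant volume form $d^\times t_0 = \bigwedge_\sigma d^\times t_{\sigma,0}$ on $M$, because each $\delta_\sigma^{-1}$ in group cohomology matches "wedge with $d(\log r_\sigma)$" in the de Rham complex. Pairing against $\xi$ then literally produces $\int_{T(F_\infty)_0/\Lambda}(\cdots)\,d^\times t_0$, and the remaining pairing $\langle\,,\,\rangle$ against $1_\cF$ reinstates the $\int_S\int_{T(\A_F^\infty)}$ over $\cF$, which after summing over ${\rm Cl}(T)_+$ via the $\bar t_i$ is exactly $\int_{T(\A_F^\infty)}$ against the full support.

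Assembling: both sides equal $h$ times $\int_{T(F_\infty)_0/\Lambda}\int_S\int_{T(\A_F^\infty)}\phi$, and this common quantity is $h\int_{T(\A_F)/T(F)}\phi(t)\,d^\times t$ by the fundamental-domain computation of the first paragraph, giving the identity. I expect the main obstacle to be the second paragraph — making the identification of the iterated group-cohomology connecting map $\partial$ with integration against the invariant volume form precise at the cochain level, i.e. checking that the de Rham edge morphism $e$ intertwines the algebraic connecting maps of \eqref{exseqCs} with wedging by $d^\times t_{\sigma,0}$, and that no stray sign or normalization constant appears. The normalization is pinned down by \eqref{defdtx} (so that $d^\times t_{\sigma,0}=r^{-1}dr$ is precisely the form dual to $\delta_\sigma$ via \eqref{defdelta}), which is the reason the constant on the right is exactly $h$ with no extra volume factors; the bookkeeping of $S/H$ versus $S$ and of $T(F)/T(\cO_F)_+$ versus $T(F)$ is what converts $h$ into the stated $\#T(\cO_F)_{+,{\rm tors}}$.
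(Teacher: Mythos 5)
Your proposal follows the same route as the paper: express $\eta=1_\cF\cap\xi$, push the cap product through to an integral over $S\times\cF$, identify the iterated connecting map $\partial$ with multiplication by the $\Lambda$-invariant volume form $d^\times t_0$ via the de Rham edge morphism $e$ and \eqref{eqeint}, and then track the factor $h$ from the comparison $S$ versus $S/H$ coming from \eqref{eqaster}. One small bookkeeping slip: after applying the cap product you get $\int_{M/\Lambda}\int_S\int_\cF\phi$ with no extra prefactor, and $h$ arises precisely from $\int_S = h\int_{S/H}$ when converting to a genuine fundamental domain for $T(\A_F)/T(F)$; your phrasing places an $h$ on both sides of the equation $\partial\phi\cap\eta = h\,(\cdots)$ and on the reduction of the right-hand side, which is a double error that happens to cancel — worth straightening out, but the argument is otherwise the paper's.
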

\begin{proof}
Recall that $\eta=\xi\cap \iota(1_{\cF})$, hence
\[
    \partial\phi\cap \eta=\sum_{z\in T(F_\infty)/T(F_\infty)_+}\left(\int_{S}\int_{T(\A_F^\infty)}1_{T(F_\infty)_+}(zs)1_{\cF}(t_f)(\partial\phi)(zs,t_f)d^\times t_fd^\times s\right)\cap\xi
    =\left(\int_{S}\int_{\cF}(\partial\phi)(s,t_f)d^\times t_f d^\times s\right)\cap\xi.
\]
As above, write $M=T(F_\infty)_0$ and notice that the function 
\[
M\ni t_0\mapsto\left(\int_{S}\int_{\cF}\phi(t_0s, t_f)d^\times t_fd^\times s\right)
\]
lies in $H^0(\Lambda,C^\infty(M,\C)) $
by the $T(F)$-invariance of $\phi$, and the $T(\cO_F)_+$-invariance of $\cF$. Multiplying by the $\Lambda$-invariant differential $d^\times t_0$ of $T(F_{\infty})_0$ we obtain a differential $\omega\in H^0(\Lambda,\Omega_M^r)$. It is clear by the definition of $\partial$, $d^\times t_0$ and $e$ that
\[
e(\omega)=\left(\int_{S}\int_{\cF}(\partial\phi)(s,t_f)d^\times t_fd^\times s\right)\in H^r(\Lambda,\C).
\]
Thus, using equations \eqref{eqaster} and \eqref{eqeint}, we deduce
\begin{eqnarray*}
\partial\phi\cap \eta&=&e(\omega)\cap\xi=\int_{M/\Lambda}\omega=\int_{M/\Lambda}\int_{S}\int_{\cF}\phi(t_0 s, t_f)d^\times t_f d^\times sd^\times t_0\\
&=&\#H\int_{M/\Lambda}\int_{S/H}\int_{\cF}\phi(t_0s, t_f)d^\times t_f d^\times sd^\times t_0=\#H\int_{T(F_\infty)_+/T(\cO_F)_+}\int_{\cF}\phi(t)d^\times t.
\end{eqnarray*}
Finally, the result follows from the fact that $\#H=\#T(\cO_F)_{+,{\rm tors}}=h$ (since $\Lambda$ is $\Z$-free), and $T(F_\infty)_+/T(\cO_F)_+\times\cF$ is a fundamental domain for $T(\A_F)/T(F)$.
\end{proof}

\section{Cohomology of arithmetic groups and Waldspurger formulas}

For any pair finite dimensional $G(F)$-representations $M$ and $N$ over $\C$ and an open compact subgroup $U\subset G(\A_F^\infty)$, we define $\cA^{\infty}(M,N)^U$ to be the set of functions
\[
\phi: G(\A_F^{\infty})/U\longrightarrow\Hom_{\C}(M,N).
\]
We write $\cA^{\infty}(N)^U:=\cA^{\infty}(\C,N)^U$. Notice that $\cA^{\infty}(M,N)^U$ has natural action of $G(F)$: 
\[
(\gamma\phi)(g)=\gamma\phi(\gamma^{-1}g);\qquad \phi\in \cA^{\infty}(M,N)^U,\quad \gamma\in G(F).
\]
We denote by $\cA^{\infty}(M,N)^U(\lambda)$ the twist of $\cA^{\infty}(M,N)^U$ by a character $\lambda:G(F)\rightarrow \C^\times$. 

\subsection{Lower Eichler-Shimura for central automorphic forms}\label{ES}

In this section we will study Harder-Eichler-Shimura morphisms for automorphic forms of even weight $\underline{k}=(k_\sigma)_{\sigma\mid\infty}\in (2\N)^{\Sigma_F}$. 

Given $U\subset G(\A_F^\infty)$ open compact subgroup, write $\cA(\C)^U$ for the set of functions $f:G(\A_F)/U\rightarrow\C$,
that are:
\begin{itemize}
\item $\cC^\infty$ when restricted to $G(F_\infty)$.

\item Right-$K_\sigma$-finite, for all $\sigma\in\Sigma_F$, where $K_\sigma$ is the maximal compact subgroup of $G(F_\sigma)$.

\item Right-$\mathcal{Z}_\sigma$-finite, for all $\sigma\in\Sigma_F$, where $\mathcal{Z}_\sigma$ is the centre of the universal enveloping algebra of $G(F_\sigma)$.
\end{itemize}

Notice that automorphic forms are left-$G(F)$-invariant elements of $\cA(\C)^U$, for some $U\subset G(\A_F^\infty)$.

For any $\sigma\in\Sigma_B$, let $\cG_\sigma$ be the Lie algebra of $G(F_\sigma)$ and $K_\sigma$ a maximal compact subgroup.
By a $(\cG_\infty,K_\infty)$-module we mean the tensor product of $(\cG_\sigma,K_\sigma)$-modules, for $\sigma\in\Sigma_B$, and finite-dimensional representations of $G(F_\sigma)$, for $\sigma\in\Sigma_F\setminus\Sigma_B$. Given a $(\cG_\infty,K_\infty)$-module $\cV$, we define
\[
\cA^{\infty}(\cV,\C)^U:=\Hom_{(\cG_\infty,K_\infty)}\left(\cV,\cA(\C)^U\right).
\]
This is consistent with the notation of the beginning of the section since, by \cite[Lemma 2.3]{guitart2017automorphic}, for $V$ a finite dimensional $G(F_\infty)$-representation,
$\cA^{\infty}(V,\C)^U=\cA^{\infty}(\cV,\C)^U$,
where $\cV$ is the $(\cG_\infty,K_\infty)$-module associated with $V$. 

Starting from this section we will assume that the ramification sets of $T$ and $G$ coincide, namely, $\Sigma_T=\Sigma_B$ and $u=r$. 
For any $\sigma\in \Sigma_B$, we fix isomorphisms $E_\sigma\simeq F_\sigma^2$. Recall that the embedding $B\hookrightarrow \M_2(E)$ of \eqref{embEinB} maps any $e\in E$ to the matrix
$\big(\begin{smallmatrix}
    e&\\&\bar e
\end{smallmatrix}\big)$. Thus, the above isomorphisms induce 
identifications $G(F_\sigma)\simeq\PGL_2(F_\sigma)$ sending $T(F_\sigma)$ to the diagonal torus. 
Due to this, for any $\underline{k}=(k_\sigma)_\sigma\in\Z^{\Sigma_F}$ with $k_\sigma\geq 2$ even, we can consider the $(\cG_\infty,K_\infty)$-module 
\[
D(\underline{k})=\bigotimes_{\sigma\in\Sigma_B}D(k_\sigma)\otimes\bigotimes_{\sigma\in\Sigma_F\setminus\Sigma_B}V(k_\sigma-2), \qquad \underline{k}=(k_\sigma),
\]
where $V(k_\sigma-2)$ and $D(k_\sigma)$ are the $(\cG_\sigma,K_\sigma)$-modules and finite dimensional representations described in \S \ref{findimreps}, \S \ref{GKR} and \S \ref{GKC}. 
\begin{definition}\label{defAFweightk}
    A modular form of weight $\underline{k}$ and level $U$ for $G$ is an element of
\[
M_{\underline k}(U):= H^0(G(F),\cA^{\infty}(D(\underline{k}),\C)^U)=\Hom_{(\cG_\infty,K_\infty)}\left(D(\underline{k}),\mathfrak{A}(G)^U\right),
\] 
where $\mathfrak{A}(G)^U=H^0(G(F),\cA(\C)^U)$ is the space of $U$-invariant automorphic forms of $G$.

\end{definition}

Fix a character $\lambda:G(F)/G(F)_+=G(F_\infty)/G(F_\infty)_+\rightarrow\pm1$.
For any $\sigma\in\Sigma_B$,
let us consider the exact sequence of $(\cG_\sigma,K_\sigma)$-modules described in \eqref{exseq1} and \eqref{exseq2}
\begin{equation}\label{ESDBV}
    0\longrightarrow D(k_\sigma)\stackrel{\imath_{\lambda_\sigma}}{\longrightarrow}\tilde\cB_\sigma^{\lambda_\sigma} \stackrel{\rho_{\lambda_\sigma}}{\longrightarrow}V(k_\sigma-2)(\lambda_\sigma) \longrightarrow 0.
\end{equation}
 For any $\Sigma\subset\Sigma_B$ and $\tau\in\Sigma$, we can also consider the intermediate $(\cG_\infty,K_\infty)$-modules
\begin{eqnarray*}
D_\Sigma^\lambda(\underline{k})&:=&\bigotimes_{\sigma\in\Sigma}D(k_\sigma)\otimes\bigotimes_{\sigma\in\Sigma_F\setminus\Sigma}V(k_\sigma-2)(\lambda_\sigma), \\ 
\tilde\cB^\lambda_{\Sigma,\tau}&:=&\bigotimes_{\sigma\in\Sigma\setminus\{\tau\}}D(k_\sigma)\otimes \tilde\cB_\tau^{\lambda_\tau}\otimes\bigotimes_{\sigma\in\Sigma_F\setminus\Sigma}V(k_\sigma-2)(\lambda_\sigma),
\end{eqnarray*}
lying in the exact sequence 
\[
0\longrightarrow D^\lambda_\Sigma(\underline{k})\longrightarrow\tilde\cB_{\Sigma,\tau}^\lambda \longrightarrow D^\lambda_{\Sigma\setminus\{\tau\}}(\underline{k}) \longrightarrow 0.
\]
Thus, we obtain the associated exact sequence of $G(F)$-modules
\begin{equation}\label{exseqglob}
0\longrightarrow \cA^{\infty}(D^\lambda_{\Sigma\setminus\{\tau\}}(\underline{k}),\C)^U\longrightarrow\cA^{\infty}(\tilde\cB^\lambda_{\Sigma,\tau},\C)^U \longrightarrow \cA^{\infty}(D^\lambda_\Sigma(\underline{k}),\C )^U\longrightarrow 0.
\end{equation}
Since $D(\underline{k})=D^\lambda_{\Sigma_B}(\underline{k})$ and $\Hom(D^\lambda_{\emptyset}(\underline{k}),\C)=V(\underline{k}-2)(\lambda):=\bigotimes_{\sigma\in\Sigma_F}V(k_\sigma-2)(\lambda_\sigma)$, the connection morphisms in the corresponding long exact sequences provide the \emph{Harder-Eichler-Shimura morphism} (see \cite{ESsanti})
\begin{eqnarray*}
&{\rm ES}_\lambda:&M_{\underline k}(U)=H^0(G(F),\cA^{\infty}(D(\underline{k}),\C)^U)\longrightarrow H^1(G(F),\cA^{\infty}(D^\lambda_{\Sigma_B\setminus\{\sigma_1\}}(\underline{k}),\C)^U)\longrightarrow \cdots\\
&\cdots&\longrightarrow H^{r-1}(G(F),\cA^{\infty}(D^\lambda_{\{\sigma_r\}}(\underline{k}),\C)^U)\longrightarrow H^{r}(G(F),\cA^{\infty}(V(\underline{k}-2))^U(\lambda)),
\end{eqnarray*}
where $r=\#\Sigma_B$. By means of ${\rm ES}_\lambda $ we realize automorphic representations of $G$ in the cohomology group 
\[
H_\ast^{r}(G(F),\cA^{\infty}(V(\underline{k}-2))(\lambda)) =\varinjlim_U H^{r}(G(F),\cA^{\infty}(V(\underline{k}-2))^U(\lambda)).
\]

\begin{remark}\label{difES}
    As explained in remark \ref{remarkonESconmor}, the exact sequence of \eqref{exseqglob} differs by a factor of $\pi$ from the one explained in \cite{ESsanti}. Hence, each connection morphism there is given by $\pi^{-1}$ times the connection morphism here. This implies that, if $F$ is totally real, the Harder-Eichler-Shimura morphism explained in \cite{ESsanti} is $\pi^{-r}{\rm ES}_\lambda$.
\end{remark}

\subsection{Classical Waldspurger formulas}\label{classWalds}

In this section we will recall the classical Waldspurger formula. We will essentially follow the reference \cite{CST}, but we will use slightly different test vectors at archimedean places.
Let $\pi$ be an automorphic representation of $G$ and let $\Pi$ be its Jacquet-Langlands lift to $\PGL_2/F$. Given the torus $T\subset G$ considered previously and a character $\chi$ of $T(\A_F)/T(F)$, we define the period integral
\begin{equation}\label{defellWalds}
    \ell(\cdot,\chi):\pi\longrightarrow \C,\qquad \ell(f,\chi)=\int_{T(\A_F)/T(F)}f(t)\cdot\chi(t)d^\times t.
\end{equation}
The aim of this section is to compute these period integrals. The first result that one can think of is the classical Waldspurger formula (see \cite{YZZ}) that asserts that, given decomposable $f_1=\bigotimes_v'f_{1,v},f_2=\bigotimes_v'f_{2,v}\in\pi$,
\begin{equation}\label{classicalWF}
    \frac{\ell(f_1,\chi)\cdot \ell(f_2,\chi^{-1})}{\langle f_1,f_2\rangle}=\frac{\Lambda_F(2)\cdot\Lambda(1/2,\Pi,\chi)}{2\Lambda(1,\Pi,{\rm ad})}\cdot\prod_v\beta_v(f_{1,v},f_{2,v}),
\end{equation}
where $\Lambda$ stands for complete global L-functions, that is, those
as products of local L-functions over all places, 
\begin{equation}\label{defalphaWalds}
     \beta_v(f_{1,v},f_{2,v}):=\frac{L(1,\eta_{T,v})\cdot L(1,\Pi_v,{\rm ad})}{\zeta_{v}(2)\cdot L(1/2,\Pi_v,\chi_v)}\int_{T(F_v)}\chi_v(t_v)\frac{\langle\pi_v(t_v)f_{1,v},f_{2,v}\rangle_v}{\langle f_{1,v},f_{2,v}\rangle_v} d^\times t_v
\end{equation}
and $\langle\;,\;\rangle=\prod_v\langle\;,\;\rangle_v$ is the inner product
\begin{equation}\label{defpetersson}
    \langle f_1,f_2\rangle=\int_{G(F)\backslash G(\A_F)}f_1(g)f_2(g)d^\times g.
\end{equation}

In order to get rid of the factor $2\Lambda(1,\Pi,{\rm ad})\Lambda_F(2)^{-1}$, one can use the Petersson pairing formula (see \cite[proposition 2.1]{CST}). It states that, given decomposable $\mathfrak{f}_1,\mathfrak{f}_2\in \Pi$,
\begin{equation}\label{Petform}
    \langle \mathfrak{f}_1,\mathfrak{f}_2\rangle=2\Lambda(1,\Pi,{\rm ad})\cdot \Lambda_F(2)^{-1}\cdot\prod_v\alpha_v(W_{\mathfrak{f}_1,v},W^-_{\mathfrak{f}_2,v}),
\end{equation}
where the product $\langle\;,\;\rangle$ is defined as in \eqref{defpetersson}, the elements of the Whittaker model
\begin{eqnarray*}
    \prod_vW_{\mathfrak{f}_i,v}&=&W_{\mathfrak{f}_i}=\int_{\A_F/F}\mathfrak{f}_i\left(\left(\begin{array}{cc}1&x\\&1\end{array}\right)g\right)\psi(-x)dx;\qquad \prod_vW^-_{\mathfrak{f}_i,v}=W^-_{\mathfrak{f}_i}=\int_{\A_F/F}\mathfrak{f}_i\left(\left(\begin{array}{cc}1&x\\&1\end{array}\right)g\right)\psi(x)dx;\\
\end{eqnarray*}
being $\psi$ the additive character defined in \eqref{defpsiad}, and 
\begin{equation}\label{calcWhitt}
    \alpha_v(W_{\mathfrak{f}_1,v},W^-_{\mathfrak{f}_2,v})=\frac{\zeta_v(2)\cdot\langle W_{\mathfrak{f}_1,v},W^-_{\mathfrak{f}_2,v} \rangle_v}{\zeta_v(1)\cdot L(1,\Pi_v,{\rm ad})};\qquad \langle W_{\mathfrak{f}_1,v},W^-_{\mathfrak{f}_2,v} \rangle_v=\int_{F_v^\times}W_{\mathfrak{f}_2,v}\left(\begin{array}{cc}a&\\&1\end{array}\right)W^-_{\mathfrak{f}_2,v}\left(\begin{array}{cc}a&\\&1\end{array}\right)d^\times a.
\end{equation}
Combining \eqref{classicalWF} and \eqref{Petform}, one obtains that for decomposable $f_1,f_2\in\pi$ and $\mathfrak{f}_1,\mathfrak{f}_2\in\Pi$,
\begin{equation}\label{WaldsconPet}
    \ell(f_1,\chi)\cdot \ell(f_2,\chi^{-1})=\Lambda(1/2,\Pi,\chi)\cdot\frac{\langle f_1,f_2\rangle}{\langle \mathfrak{f}_1,\mathfrak{f}_2\rangle}\cdot\prod_v\beta_v(f_{1,v},f_{2,v})\alpha_v(W_{\mathfrak{f}_1,v},W^-_{\mathfrak{f}_2,v}).
\end{equation}

Recall that, instead of working directly with automorphic forms, in \S \ref{ES} we have considered the space 
$M_{\underline k}(U)= H^0(G(F),\cA^{\infty}(D(\underline{k}),\C)^U)$, for some open compact subgroup $U\subset G(\A_{F}^\infty)$. Notice that, for any $V\in D(\underline{k})$ and $\Phi\in M_{\underline k}(U)$, we can construct the automorphic form $\Phi(V)$. Hence, for any automorphic representation such that $\pi_\infty=\pi\mid_{G(F_\infty)}\simeq D(\underline k)$, we can realize $\pi^\infty=\pi\mid G(\A_F^\infty)$ in the space
\[
\varinjlim_U M_{\underline k}(U).
\]
In the remainder of the section we will show a formula analogous to \eqref{WaldsconPet} but onvolving vectors in the above realization of $\pi^\infty$.

On the one hand, for a given character $\lambda:G(F_\infty)/G(F_\infty)_+\rightarrow\pm1$ and for any $\sigma\in\Sigma_B$, the morphism $\rho_{\lambda_\sigma}$ of \eqref{ESDBV} admits a $K_\sigma$-equivariant section (see \S\ref{splitGK} and lemma \ref{lemsect2})
\begin{eqnarray*}
s_{\lambda_\sigma}:V(k_\sigma-2)(\lambda_\sigma)&\longrightarrow& \tilde \cB_\sigma^{\lambda_\sigma}.
\end{eqnarray*}
On the other hand, we have the exponential map 
\begin{equation}\label{expmap}
    \exp_\sigma:F_\sigma\stackrel{\simeq}{\longrightarrow} {\rm Lie}(F_\sigma^\times)={\rm Lie}(T(F_\sigma))\subset\cG_\sigma.
\end{equation}
The element of the Lie algebra $\delta^T_\sigma:=\exp_\sigma(1)\in \cG_\sigma$ gives rise to a morphism 
\begin{equation}\label{deftildes}
 \delta s_{\lambda_\sigma}:V(k_\sigma-2)(\lambda_\sigma)\longrightarrow D(k_\sigma),\qquad \delta s_{\lambda_\sigma}(\mu)=\imath_{\lambda_\sigma}^{-1}(\delta^T_\sigma(s_{\lambda_\sigma}(\mu))-s_{\lambda_\sigma}(\delta^T_\sigma\mu)).
\end{equation}
The tensor product of such a $\delta s_{\lambda_\sigma}$ provides
\begin{equation}\label{primerdeltas}
    \underline{\delta s}_{\lambda}:V(\underline{k}-2)(\lambda)\longrightarrow D(\underline{k}).
\end{equation}
Assume now that $\chi$ is locally polynomial of degree less that $\frac{\underline k-2}{2}$, namely, $\chi(t)=\chi_0(t)\cdot t^{\underline m}$ with $t\in T(F_\infty)$, for some $\frac{2-\underline k}{2}\leq {\underline m}\leq \frac{\underline k-2}{2}$ and some locally constant character $\chi_0$. By means of the morphism \eqref{PtoCoverC}, the character $t\mapsto t^{\underline m}$ corresponds to $\mu_{\underline m}=\bigotimes_\sigma\mu_{m_\sigma}\in V(\underline{k}-2)$
\begin{equation}\label{defmumglobal}
    \mu_{m_\sigma}\left(\left|\begin{array}{cc}X& Y\\  x&y \end{array}\right|^{k_\sigma-2}\right)=x^{\frac{k_\sigma-2}{2}-m_\sigma}y^{\frac{k_\sigma-2}{2}+m_\sigma},\quad\mbox{or simply}\quad \mu_{\underline m}\left(\left|\begin{array}{cc}X& Y\\  x&y \end{array}\right|^{\underline k-2}\right)=x^{\frac{\underline k-2}{2}-\underline m}y^{\frac{\underline k-2}{2}+\underline m}.
\end{equation}
Hence, given a pair of forms $\Phi_1,\Phi_2\in M_{\underline k}(U)$, the formula \eqref{WaldsconPet} allows us to compute the product of period integrals $\ell(\Phi_1(\underline{\delta s}_\lambda(\mu_{\underline m})),\chi)\cdot \ell(\Phi_2(\underline{\delta s}_\lambda(\mu_{-\underline m}),\chi^{-1})$. 
Nevertheless, applying directly \eqref{WaldsconPet} one obtains a formula that involves the pairing $\langle \Phi_1(\underline{\delta s}_\lambda(\mu_{\underline m})),\Phi_2(\underline{\delta s}_\lambda(\mu_{-\underline m}))\rangle$ depending on $\underline m$. In order to introduce a product only depending on $\Phi_1$ and $\Phi_2$, we consider the canonical $G(F_\infty)$-invariant element
\[
\bigotimes_{\sigma\mid\infty}\Upsilon_\sigma=\Upsilon=\left|\begin{array}{cc}
    x_1 & y_1 \\
    x_2 & y_2
\end{array}\right|^{\underline k-2}\in \cP(\underline{k}-2)\otimes \cP(\underline{k}-2)\simeq V(\underline{k}-2)\otimes V(\underline{k}-2).
\]
Thus, once interpreting $\Phi_1\Phi_2$ as an element of $H^0(G(F),\cA^{\infty}(D(\underline{k})\otimes D(\underline{k}),\C)^U)$, we define
\begin{equation}\label{eqpeterssonprod}
    \langle\Phi_1,\Phi_2\rangle:=\int_{G(F)\backslash G(\A_F)}\Phi_1\Phi_2\left(\underline{\delta s}_1(\Upsilon)\right)(g,g)d^\times g.
\end{equation}
by lemmas \ref{lemoonpairupsilon1} ad \ref{lemoonpairupsilon}, this provides a non-zero $G(\A^\infty_F)$-equivariant pairing.
\begin{remark}\label{classesASHilbertform}
    Assume that $G=\PGL_2$ and $F$ is totally real. Given $U\subset\PGL_2(\A_F^\infty)$ an open compact subgroup, we can identify $M_{\underline k}(U)$ with the space of Hilbert modular forms of weight $\underline k$ and level $U$ by means of the map
    \begin{equation}\label{identCMF}
        \Phi\longmapsto f_\Phi(z,g_j):=\Phi\left(\bigotimes_{\sigma}f_{\frac{k_\sigma}{2}}\right)((g_\sigma)_\sigma,g_j)\prod_\sigma \frac{k_\sigma-1}{2i^{\frac{k_\sigma-2}{2}}f_{\frac{k_\sigma}{2}}(g_\sigma)};\qquad z=(g_\sigma i)_\sigma\in\mathfrak{H}^d,
    \end{equation}
    where $\mathfrak{H}$ is the upper-half plane, the elements $g_j$ are representatives of the finite group ${\rm Pic}(U)=G(F)_+\backslash G(\A_F)\slash U$, and $f_{\frac{k_\sigma}{2}}\in D(k_\sigma)$ are the vectors defined in equation \eqref{fn}.
    
    Notice that we also have the classical Petersson inner pairing on Hilbert modular forms:
    \[
    (f,f)_{U}=\sum_{g_j\in {\rm Pic}(U)}\iint_{\Gamma_j\backslash\mathfrak{H}^d}|f(z,g_j)|^2y^{\underline k-2}dxdy;\qquad d=[F:\Q],\quad z=x+iy\in \mathfrak{H}^d,\quad \Gamma_j=g_jUg_j^{-1}\cap G(F)_+.
    \]
    Thus, there must be a relation between both pairings.
    Indeed, if we consider $\bar\Phi\in M_{\underline k}(U)$ given by $\bar\Phi(f)=\overline{\Phi(\bar f)}$,
    since the chosen Haar measure at any infinite place $\sigma$ is $dg_\sigma^\times=\frac{dx_\sigma dy_\sigma d\theta_\sigma}{y_\sigma^2}$ for $g_\sigma=\big(\begin{smallmatrix}y_\sigma^{1/2}&x_\sigma y_\sigma^{-1/2}\\&y_\sigma^{-1/2}\end{smallmatrix}\big)\big(\begin{smallmatrix}\cos(\theta_\sigma)&\sin(\theta_\sigma)\\-\sin(\theta_\sigma)&\cos(\theta_\sigma)\end{smallmatrix}\big)$, we obtain by lemmas \ref{lemoonpairupsilon1}, \ref{pairv021} and proposition \ref{keylemma1}:
    \begin{eqnarray*}
        \langle\Phi,\bar\Phi\rangle
        &=&2^{{\underline k}-2}{\rm vol}(U)\sum_{g_j\in {\rm Pic}(U)}\int_{\Gamma_j\backslash G(F_\infty)_+}\Phi\bar\Phi\left(\bigotimes_\sigma\left(\frac{(k_{\sigma}-1)}{2}\left((-i)^{\frac{k_{\sigma}-2}{2}}f_{\frac{k_{\sigma}}{2}}+i^{\frac{k_{\sigma}-2}{2}}f_{-\frac{k_{\sigma}}{2}}\right)\right)\right)(g_\infty,g_j)d^\times g_\infty\\
        &=&2^{{\underline k}-2}{\rm vol}(U)\prod_{\sigma}\frac{(k_{\sigma}-1)^2}{2}\sum_{g_j\in {\rm Pic}(U)}\int_{\Gamma_i\backslash G(F_\infty)_+}\Phi\left(\bigotimes_\sigma f_{\frac{k_{\sigma}}{2}}\right)\overline{\Phi\left(\bigotimes_\sigma f_{\frac{k_{\sigma}}{2}}\right)}(g_\infty,g_j)d^\times g_\infty\\
        &=&2^{{\underline k}}{\rm vol}(U)\left(\frac{\pi}{2}\right)^d\sum_{g_j\in {\rm Pic}(U)}\iint_{\Gamma_j\backslash \mathfrak{H}^{d}}|f_\Phi(z,g_j)|^2y^{\underline k-2}dxdy=2^{{\underline k}}{\rm vol}(U)\left(\frac{\pi}{2}\right)^d(f_\Phi,f_\Phi)_U.
    \end{eqnarray*}
\end{remark}

Finally, regarding the automorphic representation $\Pi$ of $\PGL_2$, we would like to specify a normalized newform $\mathfrak{f}$ in order to provide a more explicit formula. Write $N\subseteq \cO_F$ for the conductor of $\Pi$, and write 
\[
U_0(N)=\left\{\big(\begin{smallmatrix} a&b\\c&d\end{smallmatrix}\big)\in \GL_2(\hat\cO_F),\;c\in N\hat\cO_F\right\}.
\]
Let $\mathfrak{f}=\bigotimes_v\mathfrak{f}_{v}\in \mathfrak{A}(\PGL_2)^{U_0(N)}$ be the $U_0(N)$-invariant automorphic form of $\Pi$ normalized so that $\mathfrak{f}_{\sigma}$ is fixed by the diagonal matrices $\big(\begin{smallmatrix} a&\\&1\end{smallmatrix}\big)\in \GL_2(F_\sigma)$, $|a|_\sigma=1$, with weight minimal for all $\sigma\in\Sigma_F$, and such that
\[
\Lambda(s,\Pi)=|d_F|^{s-1/2}\int_{\A_F^\times/F^\times}\mathfrak{f}\left(\begin{array}{cc}a&\\&1\end{array}\right)|a|^{s-1/2}d^\times a,
\] 
where $|\cdot|:\A_F^\times\rightarrow\R_+$ is the standard adelic absolute value. In \cite{CST}, the values $\langle W_{\mathfrak{f},\sigma},W_{\mathfrak{f},\sigma}^- \rangle_\sigma$ are computed explicitly for such a form:
\begin{lemma}\label{lemonWs}
We have that
    \[
\langle W_{\mathfrak{f},\sigma},W^-_{\mathfrak{f},\sigma} \rangle_\sigma=\left\{\begin{array}{ll}
    2(4\pi)^{-k_\sigma}\Gamma(k_\sigma); & F_\sigma=\R; \\
    8(2\pi)^{1-k_{\sigma_1}-k_{\sigma_2}}\Gamma\left(\frac{k_{\sigma_1}+k_{\sigma_2}}{2}\right)^2\frac{\Gamma(k_{\sigma_1})\Gamma(k_{\sigma_2})}{\Gamma(k_{\sigma_1}+k_{\sigma_2})}; & F_\sigma=\C,\;\sigma_1,\sigma_2\mid\sigma.
\end{array}\right.
\]
\end{lemma}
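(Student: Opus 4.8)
The plan is to compute the local Whittaker integrals $\langle W_{\mathfrak{f},\sigma},W^-_{\mathfrak{f},\sigma}\rangle_\sigma$ archimedean place by archimedean place, using the explicit realization of the minimal-weight vectors $\mathfrak{f}_\sigma$ in the discrete series (when $F_\sigma=\R$) and in the appropriate representation of $\GL_2(\C)$ (when $F_\sigma=\C$), together with the classical integral formula for the Mellin transform of a Whittaker function. First I would recall the shape of the archimedean Whittaker function attached to the normalized newform: for $F_\sigma=\R$, the lowest-weight vector in the weight-$k_\sigma$ discrete series has Whittaker function $W_{\mathfrak{f},\sigma}\big(\begin{smallmatrix}a&\\&1\end{smallmatrix}\big)=c\cdot a^{k_\sigma/2}e^{-2\pi a}$ for $a>0$ (and $0$ for $a<0$ in the appropriate sign convention), with the opposite-sign vector $W^-_{\mathfrak{f},\sigma}$ supported on the other half-line; the normalization constant $c$ is fixed by the condition that $\Lambda(s,\Pi)$ equals the indicated Mellin integral, which pins down $\zeta_\R(s+1/2-\ldots)$-type gamma factors. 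For $F_\sigma=\C$ I would use that the relevant representation is the principal series (or its Langlands quotient) whose Whittaker function is expressed via a $K$-Bessel function, and again normalize against $\Lambda(s,\Pi_\sigma)=\zeta_\C(\cdots)$.

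Once the explicit Whittaker functions are in hand, the integral $\int_{F_\sigma^\times}W_{\mathfrak{f},\sigma}\big(\begin{smallmatrix}a&\\&1\end{smallmatrix}\big)W^-_{\mathfrak{f},\sigma}\big(\begin{smallmatrix}a&\\&1\end{smallmatrix}\big)d^\times a$ reduces, in the real case, to $\int_0^\infty a^{k_\sigma}e^{-4\pi a}\,\frac{da}{a}\cdot(\text{const})$, which is a gamma integral evaluating to $\Gamma(k_\sigma)(4\pi)^{-k_\sigma}$ up to the normalization constants; tracking the constant from the Mellin normalization gives the factor $2$. In the complex case the integral becomes a Mellin–Barnes type integral of a product of two $K$-Bessel functions (or, after writing $W$ in terms of the two exponents $k_{\sigma_1},k_{\sigma_2}$, a Beta-function computation), and the classical formula $\int_0^\infty K_\mu(t)K_\nu(t)\,t^{s-1}dt$ — or equivalently the Barnes second lemma — produces the ratio $\Gamma\!\big(\tfrac{k_{\sigma_1}+k_{\sigma_2}}{2}\big)^2\Gamma(k_{\sigma_1})\Gamma(k_{\sigma_2})/\Gamma(k_{\sigma_1}+k_{\sigma_2})$ together with the power $(2\pi)^{1-k_{\sigma_1}-k_{\sigma_2}}$ and the leading $8$. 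One must be careful that the measure $d^\times a_\sigma$ used here is the one fixed in \S\ref{haarmeasures}, i.e.\ $d^\times a_\sigma=\zeta_\sigma(1)|a|_\sigma^{-1}da_\sigma$ with $da_\sigma$ self-dual for $\psi_\sigma$, since the factors of $\pi$ and of $2$ depend on this choice; likewise the normalization of $\mathfrak{f}_\sigma$ must be exactly the one making $\Lambda(s,\Pi)$ the stated Mellin integral, not the archimedean-$L$-factor normalization.

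The main obstacle I anticipate is bookkeeping the normalization constants consistently: the definition of $\mathfrak f$ fixes the product $\prod_v\mathfrak f_v$ via a global Mellin transform, so extracting the correct local constant at each $\sigma$ requires carefully matching the local zeta integral $\int_{F_\sigma^\times}W_{\mathfrak f,\sigma}\big(\begin{smallmatrix}a&\\&1\end{smallmatrix}\big)|a|_\sigma^{s-1/2}d^\times a$ against $\zeta_\sigma$-type factors, which in the complex case already involves a nontrivial gamma-function identity. A secondary point of care is the sign/support conventions for $W$ versus $W^-$: because $\psi_\sigma$ is $e^{2\pi i a}$ (resp.\ $e^{2\pi i\,\mathrm{Re}(a)}$) and $W^-$ is built from $\bar\psi$, one has to check that the product $W_{\mathfrak f,\sigma}\cdot W^-_{\mathfrak f,\sigma}$ is supported where both factors are nonzero and that no spurious sign or factor of $2$ from the $\R^\times$ versus $\R_{>0}$ distinction is dropped. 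Modulo these normalization checks — which for $F_\sigma=\R$ are a direct elementary computation and for $F_\sigma=\C$ reduce to a standard $K$-Bessel Mellin integral — the lemma follows; indeed this computation is carried out in \cite{CST} and the present statement only records it with the archimedean test vectors chosen in \S\ref{classWalds}, so I would ultimately cite \cite{CST} for the evaluation while indicating the above as the method.
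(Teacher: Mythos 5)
Your approach is essentially the same as the paper's: the paper's proof consists only of identifying $D(k_\sigma)$ at a complex place with the principal series $\pi(\mu_1,\mu_2)$, $\mu_1(t)=\mu_2(t)^{-1}=t^{(k_{\sigma_1}-1)/2}\bar t^{(1-k_{\sigma_2})/2}$, via \cite[Theorem 6.2]{JL}, and then pointing to the recipe stated before \cite[Lemma~3.14]{CST} for evaluating $\langle W_{\mathfrak f,\sigma},W^-_{\mathfrak f,\sigma}\rangle_\sigma$ in both the real discrete-series case and the complex principal-series case. Your sketch of the underlying gamma-integral (real) and $K$-Bessel Mellin-integral (complex) evaluations is a correct description of what \cite{CST} actually compute, and you correctly anticipate that the normalization of $\mathfrak f$ and the choice of $d^\times a_\sigma$ are the only delicate points; the one thing worth making explicit that you leave implicit is the precise Langlands parameters $\mu_1,\mu_2$ at a complex place, since that identification is what lets one read the answer directly off CST's formula.
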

\begin{proof}
    If $F_\sigma=\C$, we know by \cite[Theorem 6.2]{JL} that our representation $D(k_\sigma)$ is isomorphic to the principal series $\tilde\cB(\mu_1|\cdot |^{\frac{1}{2}})=\pi(\mu_1,\mu_2)$, where 
 \[
 \mu_1(t)=\mu_2(t)^{-1}=t^{\frac{k_{\sigma_1}-1}{2}}\bar t^{\frac{1-k_{\sigma_2}}{2}}=\ipa{t/|t|^{\frac{1}{2}}}^{\frac{k_{\sigma_1}+k_{\sigma_2}-2}{2}}|t|^{\frac{k_{\sigma_1}-k_{\sigma_2}}{4}};\qquad \sigma_1,\sigma_2\mid\sigma.
 \]
 In \cite[before lemma 3.14]{CST}, one can find a receipt to compute $\langle W_{\mathfrak{f},\sigma},W^-_{\mathfrak{f},\sigma} \rangle_\sigma$ in case $(F_\sigma,\Pi_\sigma)=(\R,D(k_\sigma))$ and $(F_\sigma,\Pi_\sigma)=(\C,\pi(\mu_1,\mu_2))$, hence, the result follows from the given computation. 
\end{proof}

Similarly as above, it is more consistent to consider a normalized element of $ M_{\underline k}(U_0(N))$ instead of the automorphic form $\mathfrak{f}$. 
Notice that $\underline {\delta s}_1(\mu_{\underline 0})\in D(\underline k)$ is fixed by the diagonal matrices $\big(\begin{smallmatrix} a&\\&1\end{smallmatrix}\big)\in \GL_2(F_\sigma)$, $|a|_\sigma=1$, with weight minimal. Hence, we can define the normalized element
\[
\Psi\in  M_{\underline k}(U_0(N))=H^0(\PGL_2(F),\cA^{\infty}(D(\underline{k}),\C)^{U_0(N)});\qquad \Psi(\underline {\delta s}_1(\mu_{\underline 0}))=\mathfrak{f},
\]
and we can consider the corresponding pairing $\langle\Psi,\Psi\rangle$ as in \eqref{eqpeterssonprod}. Notice that, in case $F$ totally real, $\Psi$ corresponds under the identification \eqref{identCMF} of remark \ref{classesASHilbertform} to the normalized Hilbert newform.
\begin{proposition}\label{WaldsforPhi2}
    Given decomposable $\Phi_1,\Phi_2\in \pi^\infty\subset \varinjlim_U M_{\underline k}(U)$ and a locally polynomial character $\chi$ such that $\chi\mid_{T(F_\infty)}=(\cdot)^{\underline m}\chi_0$, for some $\frac{2-\underline k}{2}\leq {\underline m}\leq \frac{\underline k-2}{2}$ and some locally constant character $\chi_0$, we have $\ell(\Phi_i(\underline{\delta s}_\lambda(\mu_{\underline m})),\chi)=0$ unless $\chi_0\mid_{T(F_\infty)}=\lambda$ and, if this is the case,
    \[
    \ell(\Phi_1(\underline{\delta s}_\lambda(\mu_{\underline m})),\chi)\cdot \ell(\Phi_2(\underline{\delta s}_\lambda(\mu_{-\underline m}),\chi^{-1})=C(\underline k,\underline m)\cdot L(1/2,\Pi,\chi)\cdot\frac{\langle \Phi_1,\Phi_2\rangle}{\langle \Psi,\Psi\rangle}\cdot\prod_{v\nmid\infty}\left(\beta_v(\Phi_{1,v},\Phi_{2,v})\cdot\alpha_{v}(W_{\mathfrak{f},v},W^-_{\mathfrak{f},v})\right),
    \]
    where $L(s,\Pi,\chi)$ is the L-function with the archimedean factors removed, $\langle\;,\;\rangle$ are the inner products of \eqref{eqpeterssonprod} and
    \[
    C(\underline k,\underline m)=(-1)^{\left(\sum_{\sigma\not\in\Sigma_B}\frac{k_{\sigma}-2}{2}\right)}4^{\#\Sigma_T^\R}\cdot(32\pi)^{\#\Sigma_T^\C}\left(\frac{1}{\pi}\right)^{\#(\infty\setminus\Sigma_B)}\prod_{\sigma\mid\infty}\prod_{\tilde\sigma\mid\sigma}\frac{\Gamma(\frac{k_{\tilde\sigma}}{2}-m_{\tilde\sigma})\Gamma(\frac{k_{\tilde\sigma}}{2}+m_{\tilde\sigma})}{(-1)^{m_{\tilde\sigma}}(2\pi)^{k_{\tilde\sigma}}}.
    \]
\end{proposition}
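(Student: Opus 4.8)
The plan is to reduce Proposition~\ref{WaldsforPhi2} to the combination \eqref{WaldsconPet} of the classical Waldspurger formula and the Petersson formula, and then to compute the archimedean local factors explicitly. Concretely, I would apply \eqref{WaldsconPet} to the automorphic forms $f_i=\Phi_i(\underline{\delta s}_\lambda(\mu_{\pm\underline m}))\in\pi$ (with $\chi$ and $\chi^{-1}$ respectively) and to the newform $\mathfrak f=\Psi(\underline{\delta s}_1(\mu_{\underline 0}))\in\Pi$. This immediately yields
\[
\ell(f_1,\chi)\cdot\ell(f_2,\chi^{-1})=\Lambda(1/2,\Pi,\chi)\cdot\frac{\langle f_1,f_2\rangle}{\langle\mathfrak f,\mathfrak f\rangle}\cdot\prod_v\beta_v(f_{1,v},f_{2,v})\,\alpha_v(W_{\mathfrak f,v},W^-_{\mathfrak f,v}).
\]
The first task is to separate the archimedean and non-archimedean contributions. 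At a finite place $v$ we have $f_{i,v}=\Phi_{i,v}$ under the identification $\pi_v^\infty\leftrightarrow\pi_v$ coming from $\Phi\mapsto\Phi(V)$, and $\langle W_{\mathfrak f,v},W^-_{\mathfrak f,v}\rangle_v$ feeds into $\alpha_v$; the product over $v\nmid\infty$ is exactly the factor $\prod_{v\nmid\infty}\beta_v(\Phi_{1,v},\Phi_{2,v})\alpha_v(W_{\mathfrak f,v},W^-_{\mathfrak f,v})$ in the statement, while the completed L-function $\Lambda(1/2,\Pi,\chi)$ contributes its archimedean factors, which must be absorbed into $C(\underline k,\underline m)$ together with all archimedean $\beta_\sigma$ and $\alpha_\sigma$ factors and the comparison of $\langle\;,\;\rangle$ (Petersson on $\Pi$) with $\langle\Phi_1,\Phi_2\rangle$ and $\langle\Psi,\Psi\rangle$.

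Next I would handle the vanishing statement and the archimedean $\beta_\sigma$-factors. The vanishing unless $\chi_0|_{T(F_\infty)}=\lambda$ comes from the fact that $\underline{\delta s}_\lambda$ lands in the $\lambda$-twisted part, so the local period integral $\int_{T(F_\sigma)}\chi_\sigma(t_\sigma)\langle\pi_\sigma(t_\sigma)f_{1,\sigma},f_{2,\sigma}\rangle_\sigma\,d^\times t_\sigma$ against a character whose restriction to $T(F_\sigma)/T(F_\sigma)_+$ differs from $\lambda_\sigma$ vanishes by orthogonality. For the nonzero case I would compute, for each $\sigma\in\Sigma_B$ (so $G(F_\sigma)$ is split) and for the complementary archimedean places $\sigma\notin\Sigma_B$, the quantity
\[
\beta_\sigma(f_{1,\sigma},f_{2,\sigma})=\frac{L(1,\eta_{T,\sigma})L(1,\Pi_\sigma,\mathrm{ad})}{\zeta_\sigma(2)L(1/2,\Pi_\sigma,\chi_\sigma)}\int_{T(F_\sigma)}\chi_\sigma(t_\sigma)\frac{\langle\pi_\sigma(t_\sigma)f_{1,\sigma},f_{2,\sigma}\rangle_\sigma}{\langle f_{1,\sigma},f_{2,\sigma}\rangle_\sigma}\,d^\times t_\sigma,
\]
using the explicit model of $D(k_\sigma)$ and the vectors $\underline{\delta s}_\lambda(\mu_{\pm\underline m})$. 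This is where the detailed local harmonic analysis of \S\ref{loctheory} (the sections \S\ref{GKR}, \S\ref{GKC}, \S\ref{splitGK}, lemma~\ref{lemsect2}, etc.) is essential: one needs to evaluate matrix coefficients $\langle\pi_\sigma(t_\sigma)\underline{\delta s}_\lambda(\mu_{\underline m}),\underline{\delta s}_\lambda(\mu_{-\underline m})\rangle_\sigma$ on the torus, and then integrate against $t_\sigma^{\underline m}\chi_{0,\sigma}(t_\sigma)$ over $T(F_\sigma)$ — which will produce Gamma-function ratios $\Gamma(k_{\tilde\sigma}/2-m_{\tilde\sigma})\Gamma(k_{\tilde\sigma}/2+m_{\tilde\sigma})$ together with powers of $2$, $\pi$ and the sign $(-1)^{m_{\tilde\sigma}}$. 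I expect \textbf{this local archimedean computation to be the main obstacle}: the integral over $T(F_\sigma)$ requires knowing the section $s_{\lambda_\sigma}$ and the operator $\delta s_{\lambda_\sigma}$ explicitly enough to pin down the matrix coefficient, and the three ramification types ($\sigma\in\Sigma_T^\R$, $\sigma\in\Sigma_T^\C$, $\sigma\notin\Sigma_B$) must be treated separately, accounting for the $4^{r_\R}$, $(32\pi)^{r_\C}$ and $\pi^{-(d-r-r_\C)}$ factors; the sign $(-1)^{\sum_{\sigma\notin\Sigma_B}(k_\sigma-2)/2}$ comes from the normalization of $V(k_\sigma-2)$ and $\mu_{m_\sigma}$ at the non-split archimedean places.

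Finally I would assemble the pieces. The archimedean $\alpha_\sigma(W_{\mathfrak f,\sigma},W^-_{\mathfrak f,\sigma})=\zeta_\sigma(2)\langle W_{\mathfrak f,\sigma},W^-_{\mathfrak f,\sigma}\rangle_\sigma/(\zeta_\sigma(1)L(1,\Pi_\sigma,\mathrm{ad}))$ is computed from lemma~\ref{lemonWs}, and together with the archimedean pieces of $\Lambda(1/2,\Pi,\chi)=L(1/2,\Pi,\chi)\prod_{\sigma\mid\infty}L(1/2,\Pi_\sigma,\chi_\sigma)$ and the archimedean $\beta_\sigma$-factors the $L$-factor cancellations occur exactly as designed in the definition of $\beta_v$. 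The pairing comparison is handled by lemmas~\ref{lemoonpairupsilon1}, \ref{lemoonpairupsilon}: by definition \eqref{eqpeterssonprod}, $\langle\Phi_1,\Phi_2\rangle$ is a multiple of $\langle f_1',f_2'\rangle$ for suitable choices, and the ratio $\langle\Phi_1,\Phi_2\rangle/\langle\Psi,\Psi\rangle$ absorbs any $\underline m$-dependence of $\langle f_1,f_2\rangle/\langle\mathfrak f,\mathfrak f\rangle$ into the explicit constant using the $G(F_\infty)$-invariant element $\Upsilon$; this requires comparing $\langle\underline{\delta s}_\lambda(\mu_{\underline m})\otimes\underline{\delta s}_\lambda(\mu_{-\underline m}),-\rangle$ with $\langle\underline{\delta s}_1(\Upsilon),-\rangle$, contributing further Gamma-factors and powers of $2$ to $C(\underline k,\underline m)$. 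Collecting all constants from the three sources — the archimedean $\beta_\sigma$, the archimedean $\alpha_\sigma$ (via lemma~\ref{lemonWs}), and the pairing normalization — and checking they combine into the stated $C(\underline k,\underline m)$ is the bookkeeping conclusion of the proof.
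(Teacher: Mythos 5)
Your proposal follows essentially the same route as the paper's proof: start from the combined Waldspurger--Petersson formula \eqref{WaldsconPet} applied to $f_i=\Phi_i(\underline{\delta s}_\lambda(\mu_{\pm\underline m}))$ and $\mathfrak f=\Psi(\underline{\delta s}_1(\mu_{\underline 0}))$, isolate the finite-place factors verbatim, derive the vanishing from the $\lambda$-parity of the matrix coefficient, and compute the archimedean constant $C(\underline k,\underline m)=\prod_\sigma C_\sigma$ by treating the three torus types $\R^\times$, $\C^\times$, and $\C^\times/\R^\times$ separately via the local results of \S\ref{loctheory} (Theorems~\ref{calcLI1}, \ref{calcLI2}, Lemmas~\ref{lemonWs}, \ref{lemoonpairupsilon1}, \ref{pairv021}, \ref{pairv02}) and the $\Upsilon$-normalization \eqref{eqpeterssonprod} of $\langle\Phi_1,\Phi_2\rangle/\langle\Psi,\Psi\rangle$. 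This matches the paper's argument; the only substance you defer is the explicit archimedean matrix-coefficient integrals, which you correctly identify as the technical core and which the paper handles precisely through the cited local lemmas.
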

\begin{proof}
For the proof of this proposition we will use the computations performed in \S \ref{loctheory}.
For all $\sigma\mid\infty$, we write $\beta_\sigma=\beta_\sigma(\Phi_1(\underline{\delta s}_\lambda(\mu_{\underline m}))_\sigma,\Phi_2(\underline{\delta s}_\lambda(\mu_{-\underline m}))_\sigma)$ and $\alpha_{\sigma}=\alpha_{\sigma}(W_{\mathfrak{f},\sigma},W^-_{\mathfrak{f},\sigma})$. Since $\Phi_i(\underline{\delta s}_\lambda(\mu_{\underline m}))_\sigma$ is proportional to $\delta s_{\lambda_\sigma}(\mu_{m_\sigma})$ if $\sigma\in\Sigma_B$ and to $\mu_{m_\sigma}$ otherwise, we have that
\[
\frac{\langle\Psi,\Psi\rangle}{\langle\Phi_1,\Phi_2\rangle}\frac{\langle\Phi_1(\underline{\delta s}_\lambda(\mu_{\underline m})),\Phi_2(\underline{\delta s}_\lambda(\mu_{-\underline m}))\rangle}{\langle\mathfrak{f},\mathfrak{f}\rangle}\prod_{\sigma\mid\infty}\beta_\sigma\alpha_{\sigma}=\prod_{\sigma\mid\infty}\frac{C_\sigma}{L(1/2,\Pi_\sigma,\chi_\sigma)},
\]
where
\[
C_\sigma=\left\{\begin{array}{ll}
    \langle\delta s_1\Upsilon_\sigma\rangle_\sigma\langle \Upsilon_\sigma\rangle_\sigma^{-1}\langle W_{\mathfrak{f},\sigma},W^-_{\mathfrak{f},\sigma} \rangle_\sigma L(1,\eta_{T,\sigma}) \zeta_\sigma(1)^{-1}\langle \delta s_1\mu_{0},\delta s_1\mu_{0}\rangle_\sigma^{-1}\int_{T(F_\sigma)}\chi_\sigma(t)\langle t\mu_{m_\sigma},\mu_{-m_\sigma}\rangle_\sigma d^\times t & \sigma\not\in\Sigma_B; \\
    \langle W_{\mathfrak{f},\sigma},W^-_{\mathfrak{f},\sigma} \rangle_\sigma L(1,\eta_{T,\sigma})\zeta_\sigma(1)^{-1}\langle \delta s_1\mu_{0},\delta s_1\mu_{0}\rangle_\sigma^{-1}\int_{T(F_\sigma)}\chi_\sigma(t)\langle t\delta s_{\lambda_\sigma}(\mu_{m_\sigma}),\delta s_{\lambda_\sigma}(\mu_{-m_\sigma})\rangle_\sigma d^\times t, &\sigma\in\Sigma_B. 
\end{array}\right.
\]
Notice that, in the above expression, we have freedom to choose the pairings $\langle\;,\;\rangle_\sigma$ except of the pairing \eqref{calcWhitt} in the Whittaker model. Indeed, all such $G(F_\sigma)$-invariant pairings are proportional. 

If $T(F_\sigma)=\C^\times/\R^\times$ then $L(1,\eta_{T,\sigma})\zeta_\sigma(1)^{-1}=\pi^{-1}$. Moreover, we have $\chi_\sigma=t^{m_\sigma}$, $\pi_\sigma(t)\mu_{m_\sigma}=t^{-m_\sigma}\mu_{m_\sigma}$ and ${\rm vol}(T(F_\sigma))=2$ with respect to the measure \eqref{defdtx}. Hence, by lemmas \ref{lemonWs}, \ref{lemoonpairupsilon1}, \ref{pairv021} and equation \eqref{relmummum},
\[
C_\sigma=2\langle\delta s_1\Upsilon_\sigma\rangle_\sigma\langle \Upsilon_\sigma\rangle_\sigma^{-1}\langle W_{\mathfrak{f},\sigma},W^-_{\mathfrak{f},\sigma} \rangle_\sigma \pi^{-1}\langle \delta s_1\mu_{0},\delta s_1\mu_{0}\rangle_\sigma^{-1}\langle\mu_{m_\sigma},\mu_{-m_\sigma}\rangle_\sigma=\frac{2\cdot\Gamma(\frac{k_\sigma}{2}-m_\sigma)\Gamma(\frac{k_\sigma}{2}+m_\sigma)}{(-1)^{m_\sigma+\frac{k_\sigma-2}{2}}(2\pi)^{k_\sigma+1}}.
\]

If $T(F_\sigma)=\R^\times$, then $L(1,\eta_{T,\sigma})\zeta_\sigma(1)^{-1}=1$. Hence, we obtain by theorem \ref{calcLI1}, and lemmas \ref{lemonWs} and \ref{pairv021},
\[
C_\sigma=\left\{\begin{array}{ll}4(-1)^{m_\sigma}(2\pi)^{-k_\sigma}\Gamma(\frac{k_\sigma}{2}-m_\sigma)\Gamma(\frac{k_\sigma}{2}+m_\sigma),&\chi_{0,\sigma}=\lambda_\sigma;\\
    0,&\chi_{0,\sigma}\neq\lambda_\sigma.
    \end{array}\right.
\]


Finally, if $T(F_\sigma)=\C^\times$ then we have as well $L(1,\eta_{T,\sigma})\zeta_\sigma(1)^{-1}=1$. Moreover, by theorem \ref{calcLI2} and lemmas \ref{pairv02} and \ref{lemonWs},
\[
C_\sigma=\left\{\begin{array}{ll}(-1)^{m_{\sigma_1}+m_{\sigma_2}}16(2\pi)^{1-k_{\sigma_1}-k_{\sigma_2}}\Gamma(\frac{k_{\sigma_1}}{2}-m_{\sigma_1})\Gamma(\frac{k_{\sigma_1}}{2}+m_{\sigma_1})\Gamma(\frac{k_{\sigma_2}}{2}-m_{\sigma_2})\Gamma(\frac{k_{\sigma_2}}{2}+m_{\sigma_2}),&\chi_{0,\sigma}=1;\\
    0,&\chi_{0,\sigma}\neq1.
    \end{array}\right.
\]
Hence, the result follows from \eqref{WaldsconPet} since $C(\underline k,\underline m)=\prod_{\sigma\mid\infty}C_\sigma$.
\end{proof}

In \cite{CST}, Cai, Shu and Tian obtain a more explicit formula by specifying concrete test vectors.
Let us consider $c\subseteq\cO_F$ the conductor of the character $\chi$, namely, the bigger ideal such that $\chi$ is trivial on $\hat\cO_{c}^\times/\hat\cO_{F}^\times$, where $\cO_c\subset E$ is the order of conductor $c$. 
We define $S_1:=\{v\mid N \mbox{ nonsplit in }T;\;\ord_v(c)<\ord_v(N)\}$. Let $c_1:=\prod_{\mathfrak{p}\mid c,\mathfrak{p}\not\in S_1}\mathfrak{p}^{\ord_{\mathfrak{p}}c}$ be the $S_1$-off part of $c$. Then, for any finite place $v$, there exists a $\cO_{F_v}$-order $\cO_{N,v}\subset B_v$ of discriminant $N\cO_{F_v}$ such that $\cO_{N,v}\cap E_v=\cO_{c_1,v}$. Such an order $\cO_{N,v}$ is called \emph{admissible for $(N,\chi_v)$} if at places $v\mid (N,c_1)$, it is the intersection of two maximal orders $\cO_{B,v}'$ and $\cO_{B,v}''$ of $B_v$ such that 
    \[
    \cO_{B,v}'\cap E_v=\cO_{c,v},\qquad \cO_{B,v}''\cap E_v=\left\{\begin{array}{ll}\cO_{c/N,v},&\mbox{if }\ord_v(c/N)\geq 0\\\cO_{E,v},&\mbox{otherwise.}\end{array}\right.
    \]
    \begin{remark}
    Our concept of admissibility coincides with that of \cite{CST} because their
    the condition (2) (see \cite[definition 1.3]{CST}) does not apply in our situation since $\chi_v\mid_{F_v^\times}$ is trivial.
\end{remark}
Let $\cO_N$ be an admissible $\cO_F$-order of $B$ for $(N,\chi)$ in the sense that, for any finite place $v$, $\cO_{N,v}$ is admissible for $(N,\chi_v)$. It follows that $\cO_N$ is an $\cO_F$-order of $B$ of discriminant $N$ such that $\cO_N\cap E=\cO_{c_1}$.

\begin{definition}\label{defV1}
Write $U^{S_1}=\prod_{v\not\in S_1}\cO_{N,v}^\times$.
Let $V(\pi^\infty,\chi)\subset \pi^\infty$ be the subspace of elements $\bigotimes'_vf_v\in \pi^{U^{S_1}}$  such that
$f_v$ is a $\chi_v^{-1}$-eigenform under $T(F_v)$, for all places $v\in S_1$.
    %
\end{definition}
If we assume that the root number $\epsilon(1/2,\pi_v,\chi_v)=\chi_v\eta_{T,v}(-1)\epsilon(B_v)$ for all $v\nmid\infty$ then the space $V(\pi^\infty,\chi)$ is actually one-dimensional by \cite[Proposition 3.7]{CST}. The following result computes the remaining local terms for such one dimensional space of test vectors.
\begin{lemma}\cite[Lemma 3.13]{CST}\label{lemexpli}
Write $U=\hat\cO_N^\times\subset G(\A_F^\infty)$ and let $\Phi_1\in V(\pi^\infty,\chi)$ and $\Phi_2\in V(\pi^\infty,\chi^{-1})$.
If the root number $\epsilon(1/2,\pi_v,\chi_v)=\chi_v\eta_{T,v}(-1)\epsilon(B_v)$ for all $v\nmid\infty$ then
     \[
     \prod_{v\nmid\infty}\left(\beta_v(\Phi_{1,v},\Phi_{2,v})\cdot\alpha_{v}(W_{\mathfrak{f},v},W^-_{\mathfrak{f},v})\right)=\frac{{\rm vol}(U_0(N))}{{\rm vol}(U)}|c_1^2 D|^{-\frac{1}{2}}2^{\#S_D}L_{c_1}(1,\eta_{T})^2\prod_{v\in S}L(1/2,\Pi_v,\chi_v)^{-1},
     \]
     where $S:=\{v\mid (N,Dc);\mbox{ if }v\parallel N\mbox{ then }\ord_v(c/N)\geq 0\}$, $S_D:=\{v\mid (N,D);\;\ord_v(c)<\ord_v(N)\}$, $|\cdot|$ denotes the norm of an ideal and $L_{c_1}(1,\eta_{T})=\prod_{v\mid c_1}L(1,\eta_{T,v})$.
\end{lemma}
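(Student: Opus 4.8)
The formula is \cite[Lemma 3.13]{CST}, so the only real work is to check that the normalizations fixed in \S\ref{haarmeasures} and the test vectors of Definition \ref{defV1} reproduce exactly the constants on the right-hand side. The plan is as follows. By \eqref{WaldsconPet} the quantity under study is the finite part of the product $\prod_v\beta_v(\Phi_{1,v},\Phi_{2,v})\,\alpha_v(W_{\mathfrak f,v},W^-_{\mathfrak f,v})$, with $\beta_v$ and $\alpha_v$ as in \eqref{defalphaWalds} and \eqref{calcWhitt}; I would evaluate this product one place at a time, partitioning the finite places of $F$ according to the ramification of $\Pi_v$, of $\chi_v$, and of $E_v/F_v$.

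First I would dispose of the places $v\nmid NDc$, where $\Pi_v$, $\chi_v$ and $E_v/F_v$ are all unramified and $\Phi_{i,v},\mathfrak f_v$ are the normalized spherical vectors: here $\beta_v(\Phi_{1,v},\Phi_{2,v})=1$ and $\alpha_v(W_{\mathfrak f,v},W^-_{\mathfrak f,v})=1$. The first identity is Waldspurger's unramified local integral (in Ichino--Ikeda normalized form), the second the unramified Rankin--Selberg/Whittaker computation; both hold for our Tamagawa measures because ${\rm vol}(\cO_{E_v}^\times/\cO_{F_v}^\times)=1$ when $E_v/F_v$ is unramified and the Whittaker pairing in \eqref{calcWhitt} is normalized precisely by $\zeta_v(2)\zeta_v(1)^{-1}L(1,\Pi_v,{\rm ad})^{-1}$. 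Thus the product collapses to the finite set $\{v\mid NDc\}$.

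Next I would treat the bad places one at a time, invoking the local computations of \cite[\S3]{CST} in three regimes: (i) $v\mid D$ with $v\nmid Nc$, where $\Pi_v,\chi_v$ are spherical but $E_v/F_v$ is ramified, so $L(1,\eta_{T,v})=1$ and the torus integral contributes the volume ${\rm vol}(\cO_{E_v}^\times/\cO_{F_v}^\times)$, producing the local part of $|D|^{-\frac{1}{2}}$ and a factor $2$ exactly when $v\in S_D$; (ii) $v\mid c_1$, where $\chi_v$ is ramified of conductor $\ord_v(c_1)$, producing $|c_1|_v^{-1}$ from the conductor together with a surviving $L(1,\eta_{T,v})$, so that these places assemble into $|c_1^2|^{-\frac{1}{2}}L_{c_1}(1,\eta_T)^2$; and (iii) $v\mid N$, where $\Pi_v$ is ramified (special, ramified principal series, or supercuspidal) and the test vector is the one attached to the admissible order $\cO_{N,v}$, the places $v\in S_1$ being handled with the $\chi_v^{-1}$-eigenvector of Definition \ref{defV1}, which forces the passage from $c$ to its $S_1$-off part $c_1$. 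These computations are responsible for $\prod_{v\in S}L(1/2,\Pi_v,\chi_v)^{-1}$ and for the comparison of ${\rm vol}(\cO_{N,v}^\times/\cO_{F_v}^\times)$ with ${\rm vol}(U_0(N)_v)$.

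Finally I would assemble the local contributions: the volume factors combine into the ratio ${\rm vol}(U_0(N))/{\rm vol}(U)$, using that $\cO_N$ has discriminant $N$ so that $U=\hat\cO_N^\times$ and $U_0(N)$ agree away from $N$ up to conjugacy and differ only at $v\mid N$; the conductors combine into $|c_1^2D|^{-\frac{1}{2}}$; the $\eta_T$-Euler factors into $L_{c_1}(1,\eta_T)^2$; the factors $2$ into $2^{\#S_D}$; and the ramified local L-factors into $\prod_{v\in S}L(1/2,\Pi_v,\chi_v)^{-1}$. The $|d_{F_v}|_v$-powers entering the normalizations of \S\ref{haarmeasures} cancel in pairs and leave no trace. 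I expect the main obstacle to be purely one of bookkeeping: matching \cite{CST}'s conventions (additive character, Haar measures, and their notion of admissibility, which coincides with ours by the remark preceding Definition \ref{defV1}) to the ones used here, and keeping careful track of local conductors and volumes. The one genuinely delicate local case is $v\in S_1$, where the eigenvector condition replaces $c$ by $c_1$; this is exactly what \cite[Proposition 3.7 and Lemma 3.13]{CST} carry out.
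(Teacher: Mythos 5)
The paper offers no proof of this lemma at all: it is stated as a direct citation of \cite[Lemma 3.13]{CST}, and the only surrounding commentary is the remark (just before Definition \ref{defV1}) explaining why the paper's notion of admissibility matches CST's. Your proposal correctly identifies this and turns the task into a normalization-matching exercise against CST; that is the right thing to do if one wants to actually verify the constants, and it is more than the paper itself does.

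One small imprecision in your bookkeeping: in your case~(i) you discuss places $v\mid D$ with $v\nmid Nc$ and say they produce ``a factor $2$ exactly when $v\in S_D$''. But $S_D=\{v\mid (N,D):\;\ord_v(c)<\ord_v(N)\}$ is contained in $\{v\mid N\}$, so it is disjoint from the set of places in case~(i). The factor $2^{\#S_D}$ should therefore be attributed to your case~(iii) (places $v\mid N$), specifically to those $v\mid(N,D)$ at which $\chi_v$ is less ramified than $\Pi_v$ and $E_v/F_v$ is ramified, where the torus $T(F_v)/T(\cO_{F_v})$ has an extra factor of $\Z/2$ coming from the uniformizer. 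This is a labeling error rather than a substantive gap; the assembly you describe is otherwise consistent with CST's computation, and the delicate point you flag about $c$ versus $c_1$ at $v\in S_1$ is exactly the one addressed by \cite[Proposition 3.7]{CST}.
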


\subsection{Restriction of the Harder-Eichler-Shimura morphism}

Recall that, for a fixed character $\lambda:G(F)/G(F)_+=G(F_\infty)/G(F_\infty)_+\rightarrow\pm1$, we have the morphism ${\rm ES}_\lambda$ of \S \ref{ES}:
\begin{equation}\label{eqES}
{\rm ES}_\lambda:M_{\underline k}(U)\longrightarrow  H^{r}(G(F),\cA^{\infty}(V(\underline{k}-2))^U(\lambda)).
\end{equation}

Similarly as in \eqref{primerdeltas}, the tensor product of the morphisms $\delta s_{\lambda_\sigma}:V(k_\sigma-2)(\lambda_\sigma)\rightarrow D(k_\sigma)$ of \eqref{deftildes}
 provides
\[
\underline{\delta s}_\lambda:V(\underline{k}-2)(\lambda)\longrightarrow D_\Sigma^\lambda(\underline{k}),
\] 
for every $\Sigma\subseteq\Sigma_B$.
Hence, we have $T(F)$-equivariant morphisms  
\begin{eqnarray*}
\varphi_\Sigma:C^{0}(T(\A_F),\C)\otimes V(\underline{k}-2)(\lambda)\otimes \cA^{\infty}(D^\lambda_\Sigma(\underline{k}),\C )^U&\longrightarrow& C^{\Sigma}(T(\A_F),\C)\\
\varphi_\Sigma((f\otimes \mu)\otimes \Phi)(z,t)&=& f(z,t)\cdot \Phi(\underline{\delta s}_\lambda(z^{-1}\mu))(z ,t),
\end{eqnarray*}
for any $z\in T(F_\infty)$ and $t\in T(\A_F^\infty)$.

\begin{remark}\label{canonicalphi}
In the special case $\Sigma=\emptyset$, the morphism $\varphi_\emptyset$ gives rise to a $T(F)$-equivariant
\begin{eqnarray*}
\varphi:C^{0}(T(\A_F),\C)\otimes V(\underline{k}-2)\otimes \cA^{\infty}(V(\underline{k}-2))^U(\lambda)&\longrightarrow& C^{\emptyset}(T(\A_F),\C),\\
\varphi((f\otimes\mu)\otimes\Phi)(z,t)&=&f(z,t)\cdot \lambda(z)\cdot\Phi(t)(\mu(Xy-Yx)^{\underline{k}-2})
\end{eqnarray*}
for all $z\in T(F_\infty)$, $t\in T(\A_F^{\infty})$. Indeed, for any $(f\otimes\mu)\in C^{0}(T(\A_F),\C)\otimes V(\underline{k}-2)$, we have $(f\lambda\otimes\mu)\in C^{0}(T(\A_F),\C)\otimes V(\underline{k}-2)(\lambda)$. Thus, $\varphi$ can be described as $\varphi(f\otimes\mu)=\varphi_\emptyset(f\lambda\otimes\mu)$.
\end{remark}

Recall $\partial:H^0(T(F),C^{\Sigma_T}(T(\A_F),\C))\rightarrow H^u(T(F),C^{\emptyset}(T(\A_F),\C))$ from Definition \ref{partialT}.

\begin{proposition}\label{propESpart}
We have that
\[
\partial\left(\varphi_{\Sigma_B}(F\cup \Phi)\right)=\varphi_\emptyset\left(F\cup {\rm ES}_\lambda\Phi\right)\in H^u(T(F),C^{\emptyset}(T(\A_F),\C)),
\]
for all $F\in H^0(T(F),C^{0}(T(\A_F),\C)\otimes V(\underline{k}-2)(\lambda))$ and $\Phi\in M_{\underline k}(U)$.
\end{proposition}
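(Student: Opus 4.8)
The plan is to exploit that both ${\rm ES}_\lambda$ and $\partial$ are iterated connecting homomorphisms indexed by the same set $\Sigma_B=\Sigma_T$: the morphism ${\rm ES}_\lambda$ is assembled, one place $\sigma\in\Sigma_B$ at a time, from the short exact sequences \eqref{exseqglob}, while $\partial$ is assembled in exactly the same way from the sequences \eqref{exseqCs}. I will interpolate between the two towers. For each $\Sigma\subseteq\Sigma_B$ and each $\tau\in\Sigma$ I want a morphism from the restricted and tensored form of \eqref{exseqglob} — that is, \eqref{exseqglob} restricted to $T(F)$ and tensored over $\C$ with $C^0(T(\A_F),\C)\otimes V(\underline{k}-2)(\lambda)$, which stays exact because \eqref{exseqglob} splits $\C$-linearly — to the sequence \eqref{exseqCs}, with outer vertical arrows $\varphi_{\Sigma\setminus\{\tau\}}$ and $\varphi_\Sigma$. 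Granting these, the naturality of the connecting homomorphism under morphisms of short exact sequences gives, at each rung, a commuting square relating the connecting map of the tensored–restricted \eqref{exseqglob} to the connecting map of \eqref{exseqCs} through $(\varphi_\Sigma)_\ast$ and $(\varphi_{\Sigma\setminus\{\tau\}})_\ast$; composing along the chain $\Sigma_B\supset\cdots\supset\emptyset$, and using that cup product with the degree-zero class $F$ and restriction along $T(F)\hookrightarrow G(F)$ are both morphisms of cohomological $\delta$-functors — hence commute with every connecting homomorphism occurring in ${\rm ES}_\lambda$ — one obtains precisely $\partial(\varphi_{\Sigma_B}(F\cup\Phi))=\varphi_\emptyset(F\cup{\rm ES}_\lambda\Phi)$.

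To build the middle vertical arrow I use the $K_\tau$-equivariant section $s_{\lambda_\tau}$ of $\rho_{\lambda_\tau}$ appearing in \eqref{deftildes}. I define $\varphi'_{\Sigma,\tau}$ on $C^0(T(\A_F),\C)\otimes V(\underline{k}-2)(\lambda)\otimes\cA^{\infty}(\tilde\cB^\lambda_{\Sigma,\tau},\C)^U$ by the same recipe as $\varphi_\Sigma$, namely $\varphi'_{\Sigma,\tau}((f\otimes\mu)\otimes\Phi)(z,t)=f(z,t)\cdot\Phi(\underline{\delta s}'_{\Sigma,\tau}(z^{-1}\mu))(z,t)$, where $\underline{\delta s}'_{\Sigma,\tau}\colon V(\underline{k}-2)(\lambda)\to\tilde\cB^\lambda_{\Sigma,\tau}$ inserts $s_{\lambda_\tau}$ at the place $\tau$ and $\delta s_{\lambda_\sigma}$ (resp.\ the identity) at the places $\sigma\in\Sigma\setminus\{\tau\}$ (resp.\ $\sigma\notin\Sigma$). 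Just as for $\varphi_\Sigma$, one checks that $\varphi'_{\Sigma,\tau}$ takes values in $C^\Sigma(T(\A_F),\C)$ and is $T(F)$-equivariant; the only change is at $\tau\in\Sigma$, where the output is not required to be $T(F_\tau)_0$-invariant, so the failure of $s_{\lambda_\tau}$ to be $\cG_\tau$-equivariant does no harm.

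Next I check the two squares. Over the inclusion $\cA^{\infty}(D^\lambda_{\Sigma\setminus\{\tau\}}(\underline{k}),\C)^U\hookrightarrow\cA^{\infty}(\tilde\cB^\lambda_{\Sigma,\tau},\C)^U$ (precomposition with $\rho_{\lambda_\tau}$ at the $\tau$-slot) and the inclusion $C^{\Sigma\setminus\{\tau\}}\hookrightarrow C^\Sigma$, commutativity is immediate from $\rho_{\lambda_\tau}\circ s_{\lambda_\tau}={\rm id}$, which makes $\underline{\delta s}'_{\Sigma,\tau}$ followed by precomposition with $\rho_{\lambda_\tau}$ equal to $\underline{\delta s}_\lambda$ into $D^\lambda_{\Sigma\setminus\{\tau\}}(\underline{k})$. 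Over the surjection $\cA^{\infty}(\tilde\cB^\lambda_{\Sigma,\tau},\C)^U\twoheadrightarrow\cA^{\infty}(D^\lambda_\Sigma(\underline{k}),\C)^U$ (precomposition with $\imath_{\lambda_\tau}$) and the operator $\delta_\tau$ of \eqref{exseqCs}, I must show $\delta_\tau(\varphi'_{\Sigma,\tau}((f\otimes\mu)\otimes\Phi))=\varphi_\Sigma((f\otimes\mu)\otimes(\Phi\circ\imath_{\lambda_\tau}))$. Writing $\delta_\tau$ as the action of $\delta^T_\tau=\exp_\tau(1)$ and applying the Leibniz rule, the factor $f$ contributes nothing since $\delta^T_\tau$ lies in the Lie algebra of $T(F_\tau)_0$ and $f$ is locally constant there, the argument $z^{-1}\mu$ contributes the term carrying $-s_{\lambda_\tau}(\delta^T_\tau\,\cdot\,)$ in the $\tau$-slot, and right translation of the $\cA(\C)^U$-argument contributes, through the $(\cG_\infty,K_\infty)$-equivariance of $\Phi$, the term carrying $\delta^T_\tau(s_{\lambda_\tau}(\,\cdot\,))$ in the $\tau$-slot. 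By the defining identity \eqref{deftildes}, $\delta^T_\tau(s_{\lambda_\tau}(\,\cdot\,))-s_{\lambda_\tau}(\delta^T_\tau\,\cdot\,)=\imath_{\lambda_\tau}(\delta s_{\lambda_\tau}(\,\cdot\,))$, while the remaining slots still carry $\delta s_{\lambda_\sigma}$ or the identity, so the sum equals $\Phi(\imath_{\lambda_\tau}(\underline{\delta s}_\lambda(z^{-1}\mu)))(z,t)$, which is exactly $\varphi_\Sigma((f\otimes\mu)\otimes(\Phi\circ\imath_{\lambda_\tau}))(z,t)$.

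I expect this last computation to be the main obstacle: it is the point where the abstract connecting homomorphism of \eqref{exseqCs}, realized by the Lie-derivative $\delta_\tau$, must be matched against the "derivative of a section" $\delta s_{\lambda_\tau}$ hard-wired into ${\rm ES}_\lambda$, and making it transparent requires carefully keeping track of the $\cG_\infty$-action on $\cA(\C)^U$ against the twist by $z^{-1}$ encoding the $T(F_\infty)$-equivariance of $\varphi'_{\Sigma,\tau}$ — the signs in \eqref{deftildes} are precisely what makes these cancel. Once the two squares commute, the remainder is the standard formalism of connecting homomorphisms under morphisms of short exact sequences, of cup products in a degree-zero argument, and of restriction in group cohomology, applied place by place.
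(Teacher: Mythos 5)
Your proposal is correct and follows essentially the same route as the paper: you build the same intermediate map (the paper calls it $\bar\varphi_\Sigma$, you call it $\varphi'_{\Sigma,\tau}$, inserting $s_{\lambda_\tau}$ at $\tau$ and $\delta s_{\lambda_\sigma}$ elsewhere), verify the upper square via $\rho_{\lambda_\tau}\circ s_{\lambda_\tau}={\rm id}$ and the lower square via the Leibniz rule together with the defining relation \eqref{deftildes}, and then conclude by naturality of connecting homomorphisms. You are somewhat more explicit than the paper about the $\delta$-functor bookkeeping (tensoring with the degree-zero class $F$ and restriction along $T(F)\hookrightarrow G(F)$), but the mathematical content is the same.
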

\begin{proof}
For any  $\tau\in\Sigma\subseteq\Sigma_B=\Sigma_T$, we can construct the $T(F)$-equivariant morphism
\begin{eqnarray*}
\bar\varphi_\Sigma:C^{0}(T(\A_F),\C)\otimes V(\underline{k}-2)(\lambda)\otimes \cA^{\infty}(\cB^\lambda_{\Sigma,\tau},\C )^U&\longrightarrow& C^{\Sigma}(T(\A_F),\C)\\
\bar\varphi_\Sigma((f\otimes \mu)\otimes \Phi)(z,t)&=& f(z,t)\cdot \Phi(\underline{s}_\lambda( z^{-1}\mu))(z ,t),
\end{eqnarray*}
where $\underline{s}_\lambda:V(\underline{k}-2)(\lambda)\longrightarrow \cB_{\Sigma,\tau}^\lambda$ is defined by $s_{\lambda_\tau}$ at $\tau$, $\delta s_{\lambda_\sigma}$ at $\sigma\in\Sigma\setminus\tau$ and the identity elsewhere.
We claim that $\bar\varphi_\Sigma$ fits in the following commutative diagram of $T(F)$-modules
\[
\xymatrix{
0\ar[d]&&0\ar[d]\\C^{0}(T(\A_F),\C)\otimes V(\underline{k}-2)\otimes \cA^{\infty}(D^\lambda_{\Sigma\setminus\{\tau\}}(\underline{k}),\C)^U\ar[d]_{\rho_{\lambda_\tau}^\ast}\ar[rr]^{\qquad\qquad\qquad\varphi_{\Sigma\setminus\{\tau\}}}&&C^{\Sigma\setminus\{\tau\}}(T(\A_F),\C)\ar[d]^\iota\\C^{0}(T(\A_F),\C)\otimes V(\underline{k}-2)\otimes \cA^{\infty}(\tilde\cB^\lambda_{\Sigma,\tau},\C)^U \ar[rr]^{\qquad\qquad\qquad\bar\varphi_\Sigma}\ar[d]&&C^{\Sigma}(T(\A_F),\C)\ar[d]^{\delta_\tau}\\C^{0}(T(\A_F),\C)\otimes V(\underline{k}-2)\otimes \cA^{\infty}(D^\lambda_\Sigma(\underline{k}),\C )^U\ar[d]\ar[rr]^{\qquad\qquad\qquad\varphi_\Sigma}&&C^{\Sigma}(T(\A_F),\C)\ar[d]\\0&&0
}
\]
where $\iota$ is the natural inclusion and the left column is the exact sequence \eqref{exseqglob}. Indeed, the upper square commutes since
\begin{eqnarray*}
\bar\varphi_{\Sigma}(\rho_{\lambda_\tau}^\ast((f\otimes \mu)\otimes \Phi))(z,t)&=&f(z,t)\cdot \rho_{\lambda_\tau}^\ast\Phi(\underline{s}_\lambda(z^{-1}\mu))(z,t)\\
&=&f(z,t)\cdot \Phi(\underline{\delta s}_\lambda( z^{-1}\mu))(z ,t),
\end{eqnarray*}
because $s_{\lambda_\tau}$ is a section of $\rho_{\lambda_\tau}$. On the other side, to prove that the lower square commutes, we have
\begin{eqnarray*}
\delta_\tau\bar\varphi_\Sigma((f\otimes \mu)\otimes \Phi)(z,t)&=& f(z,t)\cdot \delta_\tau\left(\Phi(\underline{s}_\lambda(z^{-1}\mu))(z,t)\right)\\
&=& f(z,t)\cdot \frac{d}{ds}\left(\Phi(\underline{s}_\lambda(e^{-s}z^{-1}\mu))(z e^s,t)\right)_{s=0}.
\end{eqnarray*}
If we write $F(x,y):=\Phi(\underline{s}_\lambda(e^{-y}z^{-1}\mu))(ze^x,t)$, we obtain
\[
\frac{\partial F}{\partial x}(0,0)=\delta^T_\tau\Phi(\underline{s}_\lambda(z^{-1}\mu))(z,t)=\Phi(\delta^T_\tau \underline{s}_\lambda(z^{-1}\mu))(z,t),\quad \frac{\partial F}{\partial y}(0,0)=-\Phi( \underline{s}_\lambda( z^{-1}\delta^T_\tau\mu))(z,t).
\]
Hence,
\[
\delta_\tau\bar\varphi_\Sigma((f\otimes \mu)\otimes \Phi)(z,t)= f(z,t)\cdot \left(\frac{\partial F}{\partial x}(0,0)+\frac{\partial F}{\partial y}(0,0)\right)=f(z,t)\cdot \left(\Phi( \underline{\delta s}_\lambda(z^{-1}\mu))(z,t)\right) =\varphi_\Sigma((f\otimes \mu)\otimes \Phi)(z,t).
\]
We deduce the result from the commutativity of the above diagram and the definitions of $\partial$ and ${\rm ES}_\lambda$.
\end{proof}

\subsection{Higher cohomology classes and global period integrals}\label{s:WaldsForm}

As in \S \ref{classWalds},
let us consider a locally polynomial character $\chi:T(\A_F)/T(F)\rightarrow\C^\times$ of degree at most $\frac{\underline{k}-2}{2}$. Namely, a character such that, when restricted to a small neighborhood of 1 in $T(F_\infty)$, it is of the form 
\[
\chi(t_\infty)=t_\infty^{\underline{m}},\qquad \underline{m}\in\Z^{\Sigma_F},\quad \frac{2-\underline{k}}{2}\leq \underline{m}\leq\frac{\underline{k}-2}{2}.
\]
By means of the morphisms \eqref{dualVP} and \eqref{PtoCoverC}, we can see $\chi$ as an element $\chi\in H^0(T(F),C^{0}(T(\A_F),\C)\otimes V(\underline{k}-2))$. Hence, given $\Phi\in M_{\underline k}(U)$, we can consider the following cup product with respect to $\varphi$ of remark \ref{canonicalphi}:
\[
\varphi\left(\chi\cup {\rm ES}_\lambda(\Phi)\right)\in H^u(T(F),C^\emptyset(T(\A_F),\C)).
\]
\begin{theorem}\label{mainTHM1}
Let $\chi:T(\A_F)/T(F)\rightarrow\C^\times$ be a locally polynomial character  such that  $\chi(t_\infty)=t_\infty^{\underline{m}}\chi_0(t_\infty)$, for some $\underline{m}\in\Z^{\Sigma_F}$, with $\frac{2-\underline{k}}{2}\leq \underline{m}\leq\frac{\underline{k}-2}{2}$ and some locally constant character $\chi_0$.
Then we have that, for all $\Phi\in  M_{\underline k}(U)$,
\[
\varphi\left(\chi\cup {\rm ES}_\lambda(\Phi)\right)\cap\eta=h\cdot \ell(\Phi(\underline{\delta s}_\lambda(\mu_{\underline{m}})),\chi)\in\C,
\]
where $h=\#T(\cO_F)_{+,{\rm tors}}$. 
\end{theorem}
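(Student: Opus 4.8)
## Proof plan for Theorem \ref{mainTHM1}

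**Overview of the strategy.**
The plan is to reduce the statement to Proposition \ref{partcapeta} via Proposition \ref{propESpart}. The two results before this theorem are exactly tailored to this purpose: Proposition \ref{propESpart} tells us that the operator $\partial$ (the iterated connecting map of Definition \ref{partialT}) intertwines the cup product of a class in $H^0(T(F), C^{\Sigma_T}(T(\A_F),\C))$ with $\Phi$ against the cup product with ${\rm ES}_\lambda(\Phi)$, while Proposition \ref{partcapeta} computes $\partial\phi\cap\eta$ as $h$ times the Tamagawa integral of $\phi$ over $T(\A_F)/T(F)$. So the whole game is to produce the right class $F$ in $H^0(T(F), C^{\Sigma_T}(T(\A_F),\C))$ whose $\partial$-image, capped with $\eta$, unwinds to the period $\ell(\Phi(\underline{\delta s}_\lambda(\mu_{\underline m})),\chi)$.

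**Step 1: build the lifted cocycle at level $\Sigma_B$.**
First I would take $\chi$, viewed via \eqref{dualVP} and \eqref{PtoCoverC} as an element of $H^0(T(F), C^0(T(\A_F),\C)\otimes V(\underline k-2))$. Twisting by $\lambda$ as in remark \ref{canonicalphi}, I get a class we may feed into $\varphi_{\Sigma_B}$. Then I form
\[
F_\chi := \varphi_{\Sigma_B}(\chi \cup \Phi) \in H^0(T(F), C^{\Sigma_T}(T(\A_F),\C)),
\]
recalling $\Sigma_T = \Sigma_B$ by the standing hypothesis of \S\ref{ES}. Here $\Phi \in M_{\underline k}(U) = H^0(G(F), \cA^\infty(D(\underline k),\C)^U)$, so $\chi \cup \Phi$ lives in $H^0(T(F), C^0(T(\A_F),\C)\otimes V(\underline k-2)(\lambda)\otimes \cA^\infty(D^\lambda_{\Sigma_B}(\underline k),\C)^U)$, and $\varphi_{\Sigma_B}$ lands in $C^{\Sigma_T}(T(\A_F),\C)$ as required; one should check it genuinely lands in the smooth, $T(F_{\Sigma_F\setminus\Sigma_T})_0$-invariant subspace, which is immediate from the definition of $\varphi_\Sigma$ and the fact that $D(k_\sigma)$ for $\sigma\in\Sigma_B$ carries a $\cC^\infty$ $G(F_\sigma)$-action while the other archimedean factors are finite-dimensional.

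**Step 2: apply Propositions \ref{propESpart} and \ref{partcapeta}.**
By Proposition \ref{propESpart}, $\partial F_\chi = \varphi_\emptyset(\chi\cup {\rm ES}_\lambda\Phi) = \varphi(\chi \cup {\rm ES}_\lambda\Phi)$ in $H^u(T(F), C^\emptyset(T(\A_F),\C))$, using $\varphi = \varphi_\emptyset$ composed with the $\lambda$-twist as in remark \ref{canonicalphi}. Capping with $\eta\in H_u(T(F), C^\emptyset_c(T(\A_F),\Z))$ and invoking Proposition \ref{partcapeta} gives
\[
\varphi(\chi\cup {\rm ES}_\lambda\Phi)\cap\eta = \partial F_\chi \cap \eta = h\int_{T(\A_F)/T(F)} F_\chi(t)\, d^\times t.
\]

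**Step 3: identify the integral with the period.**
The remaining work — which I expect to be the only genuinely computational point — is to show $F_\chi(t) = \Phi(\underline{\delta s}_\lambda(\mu_{\underline m}))(t)\cdot \chi(t)$ as a function on $T(\A_F)$, so that the integral on the right becomes exactly $\ell(\Phi(\underline{\delta s}_\lambda(\mu_{\underline m})),\chi)$ from \eqref{defellWalds}. Unwinding $\varphi_{\Sigma_B}$: for $z\in T(F_\infty)$, $t\in T(\A_F^\infty)$ we have $F_\chi(z,t) = (\lambda(z)\cdot\chi^\flat)(z,t)\cdot \Phi(\underline{\delta s}_\lambda(z^{-1}\mu_{\underline m}))(z,t)$, where $\chi^\flat$ is the $C^0\otimes V(\underline k-2)$-incarnation of $\chi$ paired off against the polynomial variable. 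Now I use $T(F_\infty)$-equivariance of $\underline{\delta s}_\lambda$ (each $\delta s_{\lambda_\sigma}$ is built $K_\sigma$-equivariantly and from the Lie element $\delta^T_\sigma$, hence is $T(F_\sigma)$-equivariant up to the $\lambda_\sigma$-twist) to move $z^{-1}$ out: $\Phi(\underline{\delta s}_\lambda(z^{-1}\mu_{\underline m}))(z,t) = \lambda(z)^{-1}\,z\cdot\big(\Phi(\underline{\delta s}_\lambda(\mu_{\underline m}))(z,t)\big)$, and since $\Phi(\underline{\delta s}_\lambda(\mu_{\underline m}))$ is scalar-valued the $G(F_\infty)$-action on it is trivial. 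The $\lambda(z)$ and $\lambda(z)^{-1}$ cancel, and what multiplies $\Phi(\underline{\delta s}_\lambda(\mu_{\underline m}))(z,t)$ is precisely the value of the character $t\mapsto t^{\underline m}_\infty\cdot\chi_0$, i.e. $\chi(z,t)$ — this is the content of \eqref{PtoCoverC} together with \eqref{defmumglobal}, which were set up so that pairing $\mu_{\underline m}$ against the canonical polynomial recovers the monomial character $t^{\underline m}$. Assembling, $F_\chi(t) = \chi(t)\cdot \Phi(\underline{\delta s}_\lambda(\mu_{\underline m}))(t)$, and the theorem follows.

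**Main obstacle.**
The conceptual skeleton (Steps 1–2) is formal once Propositions \ref{propESpart} and \ref{partcapeta} are granted. The delicate point is Step 3: correctly tracking the $\lambda$-twist through $\varphi_\emptyset$ versus $\varphi$ (remark \ref{canonicalphi}), checking that the $z^{-1}$ inside $\underline{\delta s}_\lambda$ exactly produces the archimedean part $t_\infty^{\underline m}$ of $\chi$ via \eqref{PtoCoverC}–\eqref{defmumglobal} with the right sign/normalization of $k_\sigma/2$-shifts, and confirming that no stray factor of the similitude character survives. I would do this one place $\sigma$ at a time, splitting into $\sigma\in\Sigma_B$ (where $\mu_{m_\sigma}$ is pushed through $\delta s_{\lambda_\sigma}$ into $D(k_\sigma)$) and $\sigma\notin\Sigma_B$ (where $\underline{\delta s}_\lambda$ acts as identity and the matching is directly \eqref{PtoCoverC}), and then multiply.
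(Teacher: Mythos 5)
Your overall strategy coincides with the paper's: reduce the statement to Proposition \ref{partcapeta} via Proposition \ref{propESpart}, producing the level-$\Sigma_B$ cocycle $F_\chi=\varphi_{\Sigma_B}(\chi\cup\Phi)$, and then identify the integrand with $\chi\cdot\Phi(\underline{\delta s}_\lambda(\mu_{\underline m}))$. Steps 1 and 2 are exactly what the paper does, and the stated final identity $F_\chi(z,t)=\chi(z,t)\,\Phi(\underline{\delta s}_\lambda(\mu_{\underline m}))(z,t)$ is correct.

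The justification you give in Step 3, however, is wrong. You invoke ``$T(F_\infty)$-equivariance of $\underline{\delta s}_\lambda$'' to pull $z^{-1}$ out, on the grounds that each $\delta s_{\lambda_\sigma}$ is built $K_\sigma$-equivariantly and from the Lie element $\delta^T_\sigma$. But $K_\sigma$-equivariance of $s_{\lambda_\sigma}$ does not give $T(F_\sigma)$-equivariance of $\delta s_{\lambda_\sigma}$; indeed for $\sigma\in\Sigma_B$ the target $D(k_\sigma)$ is a $(\cG_\sigma,K_\sigma)$-module, not a $G(F_\sigma)$-representation, so the non-compact group $T(F_\sigma)$ does not even act on it, and the phrase ``$\delta s_{\lambda_\sigma}$ is $T(F_\sigma)$-equivariant'' has no meaning. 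Moreover your intermediate formula $\Phi(\underline{\delta s}_\lambda(z^{-1}\mu_{\underline m}))(z,t)=\lambda(z)^{-1}\,z\cdot\bigl(\Phi(\underline{\delta s}_\lambda(\mu_{\underline m}))(z,t)\bigr)$ loses the crucial factor $z^{\underline m}$ once you declare the action on the scalar to be trivial: then $\lambda(z)\cdot\lambda(z)^{-1}=1$ and nothing supplies $z^{\underline m}$, so you would end with $\chi_0(z)\chi_f(t)$ rather than $\chi(z,t)$.

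The correct and much simpler point, which is what the paper uses, requires no equivariance of $\underline{\delta s}_\lambda$ at all: $\mu_{\underline m}$ is a $T(F_\infty)$-eigenvector, and from Definitions \ref{defmumR} and \ref{defmum} together with the $\lambda$-twist one has the scalar identity $z^{-1}\mu_{\underline m}=\lambda(z)\,z^{\underline m}\,\mu_{\underline m}$ in $V(\underline k-2)(\lambda)$. Linearity of $\underline{\delta s}_\lambda$ and of $\Phi$ then immediately gives $\Phi(\underline{\delta s}_\lambda(z^{-1}\mu_{\underline m}))(z,t)=\lambda(z)\,z^{\underline m}\,\Phi(\underline{\delta s}_\lambda(\mu_{\underline m}))(z,t)$; the two copies of $\lambda(z)$ cancel since $\lambda^2=1$, and $z^{\underline m}\chi_0(z)\chi_f(t)=\chi(z,t)$. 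With this repair your argument is the paper's.
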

\begin{proof}
One can check using \eqref{dualVP} and \eqref{PtoCoverC} that $\chi$ corresponds to $\chi_0\chi_f\otimes\mu_{\underline{m}}\in C^{0}(T(\A_F),\C)\otimes V(\underline{k}-2)$, where $\chi_f=\chi\mid_{T(\A_F^\infty)}$.
By remark \ref{canonicalphi}, proposition \ref{propESpart} and proposition \ref{partcapeta}, 
\begin{eqnarray*}
    \varphi\left(\chi\cup{\rm ES}_\lambda\Phi\right)\cap\eta&=&\varphi_\emptyset\left((\lambda\chi_0\chi_f\otimes\mu_{\underline{m}})\cup{\rm ES}_\lambda\Phi\right)\cap\eta=\partial\left(\varphi_{\Sigma_B}((\lambda\chi_0\chi_f\otimes\mu_{\underline{m}})\otimes\Phi)\right)\cap\eta\\
    &=&h\int_{T(\A_F)/T(F)}\varphi_{\Sigma_B}\left((\lambda\chi_0\chi_f\otimes\mu_{\underline{m}})\otimes\Phi\right)(\tau)d^\times \tau.
\end{eqnarray*}
Moreover, $z^{-1}\mu_{\underline{m}}=\lambda(z)z^{\underline{m}}\mu_{\underline{m}}$.
Hence, for all $ z\in T(F_\infty)$, $t\in T(\A^\infty)$,
\begin{eqnarray*}
\varphi_{\Sigma_B}\left((\lambda\chi_0\chi_f\otimes\mu_{\underline{m}})\otimes\Phi\right)(z,t)&=&\lambda(z)\chi_0(z)\chi_f(t)\Phi(\underline{\delta s}_\lambda(z^{-1}\mu_{\underline{m}}))(z,t)=z^{\underline{m}}\chi_0(z)\chi_f(t)\Phi(\underline{\delta s}_\lambda(\mu_{\underline{m}}))(z,t)\\
&=&\chi(z,t)\Phi(\underline{\delta s}_\lambda(\mu_{\underline{m}}))(z,t).
\end{eqnarray*}
Thus, we obtain
\[
\varphi\left(\chi\cup{\rm ES}_\lambda\Phi\right)\cap\eta=h\int_{T(\A_F)/T(F)}\chi(\tau)\cdot\Phi(\underline{\delta s}_\lambda(\mu_{\underline{m}}))(\tau )d^\times \tau=h\cdot \ell(\Phi(\underline{\delta s}_\lambda(\mu_{\underline{m}})),\chi),
\]
and the result follows.
\end{proof}

\subsection{Waldspurger formulas in higher cohomology}\label{Finalsection}

As in previous sections, fix $\lambda:G(F)/G(F)_+\rightarrow\pm 1$, let $\pi$ be an automorphic representation of $G$ of even weight $\underline{k}$ and conductor $N$.
Thanks to the Harder-Eichler-Shimura morphism ${\rm ES}_\lambda$ explained in \S\ref{ES}, we can realize $\pi^\infty$
in the space $H_\ast^r(G(F),\cA^\infty(V(\underline{k}-2))(\lambda))$. Given a locally polynomial character $\chi$ of degree at most $\frac{\underline{k}-2}{2}$, we can consider 
\[
\cP(\cdot,\chi):\pi^\infty\subset H_\ast^r(G(F),\cA^\infty(V(\underline{k}-2))(\lambda))\longrightarrow\C,\qquad\cP(\phi^\lambda,\chi):=\varphi\left(\chi\cup\phi^\lambda\right)\cap\eta\in\C.
\]
The complex number $\cP(\phi^\lambda,\chi)$ can be seen as an analogy in higher cohomology of the periods $\ell(f,\chi)$ introduced in \S \ref{classWalds}. Furthermore, we can express $\cP(\phi^\lambda,\chi)$ in terms of certain period $\ell(f,\chi)$ thanks to Theorem \ref{mainTHM1}. We will use this fact together with classical Waldspurger formulas introduced in \S \ref{classWalds} to obtain formulas analogous to that of proposition \ref{WaldsforPhi2} involving $\cP(\phi_\lambda,\chi)$. Moreover, we will use lemma \ref{lemexpli} to make those formulas explicit. Hence, for a module $\cO_N\subset B$ admissible for $(N,\chi)$, we recall the space $V(\pi^\infty,\chi)$ of definition \ref{defV1}, we consider its realization in $H^r(G(F),\cA^\infty(V(\underline{k}-2)^U)(\lambda))$, for $U=\hat\cO_N^\times$, and we fix non-zero elements $\phi_{01}^\lambda\in V(\pi^\infty,\chi)$ and $\phi_{02}^\lambda\in V(\pi^\infty,\chi^{-1})$. Write $L^S(1/2,\Pi,\chi)$ for the L-function with the local factors at places of a finite set $S$ removed. The following result is a direct consequence of proposition \ref{WaldsforPhi2}, lemma \ref{lemexpli} and theorem \ref{mainTHM1}:
\begin{theorem}[Waldspurger formula in higher cohomology]\label{THMwaldsHC1}
Let $\chi:T(\A_F)/T(F)\rightarrow\C^\times$ be a locally polynomial character  such that  $\chi(t_\infty)=t_\infty^{\underline{m}}\chi_0(t_\infty)$, for some $\underline{m}\in\Z^{\Sigma_F}$, with $\frac{2-\underline{k}}{2}\leq \underline{m}\leq\frac{\underline{k}-2}{2}$ and some locally constant character $\chi_0$.  
Given decomposable $\phi_1^\lambda=\bigotimes_{v\nmid\infty}'\phi_{1,v}^\lambda,\phi_2^\lambda=\bigotimes_{v\nmid\infty}'\phi_{2,v}^\lambda\in\pi^\infty\subset H_\ast^r(G(F),\cA^\infty(V(\underline{k}-2))(\lambda))$ ,
we have $\cP(\phi_i^\lambda,\chi)=0$ unless $\chi_{0}=\lambda$ and the root number $\epsilon(1/2,\pi_v,\chi_v)=\chi_v\eta_{T,v}(-1)\epsilon(B_v)$, for all $v\nmid\infty$. In that case, if $\phi_i^\lambda$ differ from $\phi_{0i}^\lambda$ in a finite set of places $\mathfrak{S}$ then  
\[
\cP(\phi_1^\lambda,\chi)\cdot \cP(\phi_2^\lambda,\chi^{-1})=\frac{2^{\#S_D}L_{c_1}(1,\eta_{T})^2h^2
C(\underline k,\underline m)}{|c_1^2 D|^{\frac{1}{2}}}\cdot L^S(1/2,\Pi,\chi)\cdot\frac{\langle \Phi_1,\Phi_2\rangle}{\langle \Psi,\Psi\rangle}\cdot\frac{{\rm vol}(U_0(N))}{{\rm vol}(U)}\prod_{v\in\mathfrak{S}}\frac{\beta_{v}(\phi_{1,v}^\lambda,\phi_{2,v}^\lambda)}{\beta_{v}(\phi_{01,v}^\lambda,\phi_{02,v}^\lambda)},
\]
where $S:=\{v\mid (N,Dc);\mbox{ if }v\parallel N\mbox{ then }\ord_v(c/N)\geq 0\}$, $S_D:=\{v\mid (N,D);\;\ord_v(c)<\ord_v(N)\}$, $\Phi_i\in H^0_\ast(G(F),\cA^\infty(D(\underline k),C))$ are such that ${\rm ES}_\lambda(\Phi_i)=\phi_i^\lambda$ and
    \[
    C(\underline k,\underline m)=(-1)^{\left(\sum_{\sigma\not\in\Sigma_B}\frac{k_{\sigma}-2}{2}\right)}4^{r_\R}\cdot(32\pi)^{r_\C}\left(\frac{1}{\pi}\right)^{d-r-r_\C}\prod_{\sigma\mid\infty}\prod_{\tilde\sigma\mid\sigma}\frac{\Gamma(\frac{k_{\tilde\sigma}}{2}-m_{\tilde\sigma})\Gamma(\frac{k_{\tilde\sigma}}{2}+m_{\tilde\sigma})}{(-1)^{m_{\tilde\sigma}}(2\pi)^{k_{\tilde\sigma}}}.
    \]
\end{theorem}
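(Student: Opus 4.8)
The plan is to chain together the three results quoted just before the statement; once they are in place the argument is essentially one of bookkeeping. First I would reduce the cohomological quantity $\cP(\phi_i^\lambda,\chi)$ to a classical toric period. Since $\pi^\infty$ is realized inside $H_\ast^r(G(F),\cA^\infty(V(\underline k-2))(\lambda))$ through ${\rm ES}_\lambda$, each $\phi_i^\lambda$ admits a lift $\Phi_i$ with ${\rm ES}_\lambda(\Phi_i)=\phi_i^\lambda$; working inside the $\pi^\infty$-component, on which ${\rm ES}_\lambda$ is injective, this lift is well-defined, may be taken decomposable, and outside the finite set $\mathfrak{S}$ agrees up to scalars with the admissible test vectors $\phi_{0i}^\lambda\in V(\pi^\infty,\chi^{\pm 1})$ of Definition \ref{defV1}. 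Applying Theorem \ref{mainTHM1} to the pairs $(\Phi_1,\chi)$ and $(\Phi_2,\chi^{-1})$ gives
\[
\cP(\phi_1^\lambda,\chi)=h\cdot\ell\big(\Phi_1(\underline{\delta s}_\lambda(\mu_{\underline m})),\chi\big),\qquad \cP(\phi_2^\lambda,\chi^{-1})=h\cdot\ell\big(\Phi_2(\underline{\delta s}_\lambda(\mu_{-\underline m})),\chi^{-1}\big),
\]
so it suffices to compute the product of the two right-hand periods and multiply by $h^2$.

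Next I would feed this product into Proposition \ref{WaldsforPhi2}. It immediately yields the vanishing of $\cP(\phi_i^\lambda,\chi)$ unless $\chi_0=\lambda$, and in the remaining case expresses the product of periods as $C(\underline k,\underline m)\cdot L(1/2,\Pi,\chi)\cdot\langle\Phi_1,\Phi_2\rangle\langle\Psi,\Psi\rangle^{-1}\cdot\prod_{v\nmid\infty}\beta_v(\Phi_{1,v},\Phi_{2,v})\alpha_v(W_{\mathfrak f,v},W^-_{\mathfrak f,v})$, where in that proposition $L(1/2,\Pi,\chi)$ means the $L$-function with only the archimedean factors removed. The remaining finite-place product I would factor through $\mathfrak{S}$: since $\beta_v$ as defined in \eqref{defalphaWalds} is invariant under rescaling either of its arguments and $\alpha_v(W_{\mathfrak f,v},W^-_{\mathfrak f,v})$ does not depend on the $\Phi_i$, one has
\[
\prod_{v\nmid\infty}\beta_v(\Phi_{1,v},\Phi_{2,v})\alpha_v(W_{\mathfrak f,v},W^-_{\mathfrak f,v})=\Big(\prod_{v\nmid\infty}\beta_v(\phi_{01,v}^\lambda,\phi_{02,v}^\lambda)\alpha_v(W_{\mathfrak f,v},W^-_{\mathfrak f,v})\Big)\prod_{v\in\mathfrak{S}}\frac{\beta_v(\phi_{1,v}^\lambda,\phi_{2,v}^\lambda)}{\beta_v(\phi_{01,v}^\lambda,\phi_{02,v}^\lambda)},
\]
and the first bracket is exactly what Lemma \ref{lemexpli} evaluates for $U=\hat\cO_N^\times$, namely $\tfrac{{\rm vol}(U_0(N))}{{\rm vol}(U)}|c_1^2D|^{-1/2}2^{\#S_D}L_{c_1}(1,\eta_T)^2\prod_{v\in S}L(1/2,\Pi_v,\chi_v)^{-1}$. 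The same lemma (equivalently, the Tunnell--Saito local dichotomy, which makes $\beta_v$ identically zero and $V(\pi^\infty,\chi)=0$ otherwise) shows this bracket vanishes unless $\epsilon(1/2,\pi_v,\chi_v)=\chi_v\eta_{T,v}(-1)\epsilon(B_v)$ at every finite place $v$, which supplies the non-archimedean part of the exceptional locus asserted in the statement.

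Finally I would assemble the pieces: substituting the last two displays into $h^2\,\ell(\cdot,\chi)\,\ell(\cdot,\chi^{-1})$ and using that $L(1/2,\Pi,\chi)\cdot\prod_{v\in S}L(1/2,\Pi_v,\chi_v)^{-1}$ removes, on top of the archimedean local factors, exactly the places of $S$ and hence equals $L^S(1/2,\Pi,\chi)$, one recovers the displayed identity together with the constant $C(\underline k,\underline m)$. I do not expect a deep obstacle here, since the genuine content sits in Theorem \ref{mainTHM1} (the cohomology-to-period comparison), Proposition \ref{WaldsforPhi2} (the archimedean local computations), and Lemma \ref{lemexpli} (the non-archimedean test-vector computation from \cite{CST}). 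The one point that will require real care is the bookkeeping: keeping the three $L$-function normalizations (complete, partial $L^S$, and ``archimedean factors removed'') and the conductor data ($c$, its $S_1$-off part $c_1$, and the sets $S$, $S_D$, $S_1$) consistent throughout, and checking that the two independent vanishing phenomena — the archimedean condition $\chi_0=\lambda$ coming from Proposition \ref{WaldsforPhi2} and the finite root-number condition coming from Lemma \ref{lemexpli} — together cut out precisely the exceptional locus claimed in the theorem.
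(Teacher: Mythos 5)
Your proposal is correct and follows exactly the paper's intended argument: the paper presents Theorem \ref{THMwaldsHC1} as an immediate consequence of Theorem \ref{mainTHM1} (which converts $\cP(\phi^\lambda,\chi)$ into $h\cdot\ell(\Phi(\underline{\delta s}_\lambda(\mu_{\underline m})),\chi)$), Proposition \ref{WaldsforPhi2} (the explicit Waldspurger formula with the archimedean constants $C(\underline k,\underline m)$), and Lemma \ref{lemexpli} (the nonarchimedean test-vector evaluation), and your chaining of these three, including the factoring of the finite product through $\mathfrak{S}$ via scale-invariance of $\beta_v$ and the Tunnell--Saito dichotomy for the vanishing locus, is precisely the bookkeeping the paper has in mind.
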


\section{Local archimedean theory}\label{loctheory}

In this section we will study the local archimedean representations generated by an automorphic form of even weight $\underline{k}\in(2\N)^{[F:\Q]}$.

\subsection{The $(\cG,K)$-modules (of discrete series) of even weight $k$: Case $\PGL_2(\R)$}\label{GKR}

Let us consider $\PGL_2(\R)$ as a real Lie group. Its maximal compact subgroup $K$ is the image of ${\rm O}(2)=\SO(2)\ltimes\left\langle\mbox{\tiny$\left(\begin{array}{cc}1&\\&-1\end{array}\right)$}\right\rangle$, where
\[
\SO(2):=\left\{\left(\begin{array}{cc}a&b\\- b& a\end{array}\right):\;|a|^2+|b|^2=1\right\}\subset \SL_2(\R).
\]
Notice that $\SO(2)\simeq S^1$. For any $\theta\in S^1$ we will write
\[
\kappa(\theta):=\left(\begin{array}{cc}\cos\theta&\sin\theta\\-\sin\theta&\cos\theta\end{array}\right).
\]

Notice that
any $g\in\GL_2(\R)_+$ admits a decomposition
\begin{equation}\label{eqstar}
g=u\left(\begin{array}{cc}y&x\\&y^{-1}\end{array}\right)\kappa(\theta),\qquad y\in\R^\times,\;u\in\R_+,\;x\in\R,\;\theta\in S^1.
\end{equation}

For any character $\chi:\R^\times\rightarrow\C^\times$, let us consider the induced representation of $\PGL_2(\R)_+$
\[
\cB(\chi):=\left\{f:\GL_2(\R)_+\rightarrow\C:\;f\left(\left(\begin{array}{cc}t_1&x\\&t_2\end{array}\right)g\right)=\chi(t_1/t_2)\cdot f(g)\right\}.
\]
By the decomposition \eqref{eqstar}, we have a correspondence
\begin{equation}\label{eqBI}
\cB(\chi)\stackrel{\simeq}{\longrightarrow}I(\chi)=\{f:S^1\rightarrow\C:\;f(\theta+\pi)= f(\theta)\}.
\end{equation}

For any integer $n\in\Z$, let us consider the character
$e^{in\theta}:S^1\longrightarrow\C^\times$. 
Notice that the only characters appearing in $I(\chi)$ are those of the form $e^{2in\theta}$. The corresponding Lie algebra is $\cG_\R={\rm Lie}(\PGL_2(\R))\simeq\{g\in\M_2(\R),\;{\rm Tr}g=0\}$. 
\begin{definition}
Write $I(\chi,n)$ for the subspace $\C e^{2in\theta}$ in $I(\chi)$, then we consider
\[
\tilde I(\chi):=\bigoplus_{n\in\Z}I(\chi,n)\subseteq I(\chi).
\] 
Finally, we write $\tilde\cB(\chi)\subset\cB(\chi)$ for the preimage 
under the isomorphism \eqref{eqBI} of $\tilde I(\chi)$.
\end{definition}

\begin{proposition}
Assume that $\chi(t)=\chi_k(t):=t^{\frac{k}{2}}$, for an even integer $k\in 2\Z$. We have a morphism of $\GL_2(\R)_+$-representations:
\[
\rho:\cB(\chi_k)\longrightarrow V(k-2); \qquad \rho(f)(P):=\int_{S^1}f(\kappa(\theta))P(-\sin\theta,\cos\theta)d\theta
\]
\end{proposition}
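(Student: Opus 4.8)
The plan is to check two things: that $\rho(f)$ genuinely lands in $V(k-2)=\cP(k-2)^\vee$, and that $\rho$ intertwines the (right‑translation) action of $\GL_2(\R)_+$ on $\cB(\chi_k)$, $(g\cdot f)(h):=f(hg)$, with the given action on $V(k-2)$. The first point is essentially formal: $\rho(f)$ is by construction linear in $P\in\cP(k-2)$ and $V(k-2)$ is the \emph{full} dual, so all one needs is that the integrand $\theta\mapsto f(\kappa(\theta))P(-\sin\theta,\cos\theta)$ descends to a continuous (hence integrable over the compact $\SO(2)$) function on $S^1$. This holds because $f$ is invariant under scalar matrices ($\chi_k(1)=1$), so $f(\kappa(\theta+\pi))=f(-\kappa(\theta))=f(\kappa(\theta))$, while $P(-\sin(\theta+\pi),\cos(\theta+\pi))=(-1)^{k-2}P(-\sin\theta,\cos\theta)=P(-\sin\theta,\cos\theta)$ since $k$ is even.

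For equivariance the key tool is the Iwasawa‑type factorization $\GL_2(\R)_+=B^+\cdot\SO(2)$, with $B^+$ the upper‑triangular matrices of positive determinant. Fixing $g\in\GL_2(\R)_+$, for each $\theta$ I would write
\[
\kappa(\theta)\,g=b(\theta,g)\,\kappa(\theta\cdot g),\qquad b(\theta,g)=\big(\begin{smallmatrix}a(\theta,g)&\ast\\0&d(\theta,g)\end{smallmatrix}\big),
\]
the factorization being unique up to simultaneously replacing $b$ by $-b$ and $\theta\cdot g$ by $\theta\cdot g+\pi$, an ambiguity that is harmless below since every quantity appearing occurs to an even power or is $\pi$‑periodic. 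Comparing determinants gives $a(\theta,g)\,d(\theta,g)=\det g$. Two elementary computations are then needed. First, reading off bottom rows in $\kappa(\theta\cdot g)=b(\theta,g)^{-1}\kappa(\theta)g$ (using $(0,1)b(\theta,g)^{-1}=(0,d(\theta,g)^{-1})$) gives
\[
(-\sin(\theta\cdot g),\cos(\theta\cdot g))=d(\theta,g)^{-1}\,(-\sin\theta,\cos\theta)\,g .
\]
Second, $\theta\mapsto\theta\cdot g$ is an orientation‑preserving diffeomorphism of $S^1$ (orientation‑preserving because $\GL_2(\R)_+$ is connected), whose Jacobian I would compute by differentiating the explicit expression for $\tan(\theta\cdot g)$ in the entries of $g$:
\[
\frac{d(\theta\cdot g)}{d\theta}=\frac{\det g}{d(\theta,g)^{2}}.
\]

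Granting these, I would compute $\rho(g\cdot f)(P)=\int_{S^1}f(\kappa(\theta)g)\,P(-\sin\theta,\cos\theta)\,d\theta$ as follows, writing $a=a(\theta,g)$, $d=d(\theta,g)$: the defining relation of $\cB(\chi_k)$ gives $f(\kappa(\theta)g)=(a/d)^{k/2}f(\kappa(\theta\cdot g))=(\det g)^{k/2}d^{-k}f(\kappa(\theta\cdot g))$; substituting $\phi=\theta\cdot g$ turns $d\theta$ into $d^{2}(\det g)^{-1}d\phi$; and the first identity together with homogeneity of degree $k-2$ gives $P(-\sin\theta,\cos\theta)=d^{k-2}P\big((-\sin\phi,\cos\phi)g^{-1}\big)$. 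All powers of $d$ cancel and one is left with $\rho(g\cdot f)(P)=(\det g)^{(k-2)/2}\int_{S^1}f(\kappa(\phi))\,P\big((-\sin\phi,\cos\phi)g^{-1}\big)\,d\phi$, which is exactly $\rho(f)(g^{-1}P)=(g\cdot\rho(f))(P)$ because $(g^{-1}P)(w)=(\det g)^{(k-2)/2}P(wg^{-1})$. Since $P\in\cP(k-2)$ is arbitrary, $\rho(g\cdot f)=g\cdot\rho(f)$, and $\rho$ is clearly linear in $f$, so it is a morphism of $\GL_2(\R)_+$‑representations.

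I expect the one genuine obstacle to be the bookkeeping: pinning down the Jacobian and then checking that the powers of $\det g$ and of $d(\theta,g)$ conspire to cancel — conceptually this is just the fact that $\cB(\chi_k)$, $\chi_k(t)=t^{k/2}$, is precisely the parabolic induction whose $\SO(2)$‑integration pairing against the dual realization of $\cP(k-2)$ is $\GL_2(\R)_+$‑invariant. An equivalent but more structural route would first build a $\GL_2(\R)_+$‑equivariant embedding $\cP(k-2)\hookrightarrow\cB(\chi')$ for the character $\chi'(t)=t^{1-k/2}$, sending $P$ to the element determined on $\SO(2)$ by $\theta\mapsto P(-\sin\theta,\cos\theta)$, and then invoke $\GL_2(\R)_+$‑invariance of the pairing $\cB(\chi_k)\times\cB(\chi')\to\C$, $(f_1,f_2)\mapsto\int_{S^1}f_1(\kappa(\theta))f_2(\kappa(\theta))\,d\theta$; but the direct change of variables above is self‑contained and is what I would write up.
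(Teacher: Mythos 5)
Your proposal is correct, but its main line of argument is genuinely different from the paper's. The paper's proof is a one‑line reduction: it observes that the function $h(g)=f(g)\cdot P(c,d)\cdot\det(g)^{\frac{2-k}{2}}$ (for $g=\big(\begin{smallmatrix}a&b\\c&d\end{smallmatrix}\big)$) lies in the representation induced from the modular quasicharacter $\big(\begin{smallmatrix}t_1&x\\&t_2\end{smallmatrix}\big)\mapsto t_1/t_2$, and then invokes a standard lemma (Bump, Lemma~2.6.1) asserting that integrating such an $h$ over the maximal compact yields a $\GL_2(\R)_+$‑invariant functional. You instead run the full Iwasawa change‑of‑variables computation from scratch: factor $\kappa(\theta)g=b(\theta,g)\kappa(\theta\cdot g)$, deduce $(-\sin(\theta\cdot g),\cos(\theta\cdot g))=d(\theta,g)^{-1}(-\sin\theta,\cos\theta)g$, prove the Jacobian identity $d(\theta\cdot g)/d\theta=\det g/d(\theta,g)^2$, and check that the powers of $d(\theta,g)$ and $\det g$ cancel so that $\rho(g\cdot f)(P)=\rho(f)(g^{-1}P)$. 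I verified your Jacobian and cancellation bookkeeping and they are right (the key numerator identity collapses to $\det g$ by the Pythagorean identity, and with $(g^{-1}P)(w)=(\det g)^{(k-2)/2}P(wg^{-1})$ the exponents match exactly). Your approach buys a self‑contained proof that avoids citing Bump, at the cost of some explicit trigonometric calculus; the paper buys brevity by recognizing the pairing as the canonical $K$‑integration pairing between $\cB(\chi)$ and $\cB(\delta\chi^{-1})$. Notably, the ``more structural route'' you sketch at the end — embedding $\cP(k-2)\hookrightarrow\cB(\chi')$ with $\chi'(t)=t^{1-k/2}$ via $P\mapsto P(c,d)\det(g)^{(2-k)/2}$ and then appealing to invariance of $\int_{S^1}f_1(\kappa(\theta))f_2(\kappa(\theta))\,d\theta$ on $\cB(\chi_k)\times\cB(\chi')$ — is exactly the paper's argument in disguise, since $\chi_k\chi'$ is the modular quasicharacter; so you did independently identify the paper's route even if you chose to write up the computational one.
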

\begin{proof}
The result follows from \cite[Lemma 2.6.1]{Bump} since $\tiny{\left(\begin{array}{cc}t_1&x\\&t_2\end{array}\right)}\mapsto t_1/t_2$ is the modular quasicharacter and the function
\[
h:\GL_2(\R)_+\longrightarrow\C,\qquad h(g)=f(g)\cdot P(c,d)\cdot\det(g)^{\frac{2-k}{2}},\quad g=\left(\begin{array}{cc}a&b\\c&d\end{array}\right),
\]
is in the induced representation by the quasicharacter. 
\end{proof}
The preimage under \eqref{eqBI} of the subspace $I(\chi_k,n)$ is generated  by the function 
\begin{equation}\label{fn}
f_n\left(u\left(\begin{array}{cc}y&x\\&y^{-1}\end{array}\right)\kappa(\theta)\right)=y^k\cdot e^{2in\theta}\in\cB(\chi_k).
\end{equation}
Notice that $\tilde \cB(\chi_k)$ has a natural structure of $(\cG_\R,\SO(2))$-module. To extend $\tilde \cB(\chi_k)$ to a $(\cG_\R,K)$-module one has to define an action of $w=\big(\begin{smallmatrix}1&\\&-1\end{smallmatrix}\big)$. We have two possibilities providing two extensions  $\cB(\chi_k)^\pm$ by defining $w(f_n)=\pm(-1)^{\frac{k-2}{2}} f_{-n}$  (see \cite[\S 1.3]{ESsanti}). It turns out that $D(k) :=\tilde \cB(\chi_k)\cap \ker(\rho) $ is the unique sub-$(\cG_\R,K)$-module for both $\tilde\cB(\chi_k)^\pm$. We have the exact sequences of $(\cG,K)$-modules:
\begin{eqnarray*}
0\longrightarrow &D(k)&\longrightarrow \tilde \cB(\chi_k)^+\stackrel{\rho_+}{\longrightarrow} V(k-2)\longrightarrow 0,\\
0\longrightarrow &D(k)&\longrightarrow \tilde \cB(\chi_k)^-\stackrel{\rho_-}{\longrightarrow} V(k-2)(-1)\longrightarrow 0,
\end{eqnarray*}
where $V(k-2)(-1)$ is the representation $V(k-2)$ twisted by the character $g\mapsto{\rm sign}\det(g)$. Indeed,
\begin{eqnarray*}
\rho(wf_n)(P)&=& (-1)^{\frac{k-2}{2}}\rho(\pm f_{-n})(P) =\frac{(-1)^{\frac{k-2}{2}}}{\pm 1}\int_{S^1}e^{-2in\theta}P(-\sin\theta,\cos\theta)d\theta= \frac{(-1)^{\frac{k-2}{2}}}{\pm 1}\int_{S^1}e^{2in\theta}P(\sin\theta,\cos\theta)d\theta\\
&=& \pm\int_{S^1}e^{2in\theta}(wP)(-\sin\theta,\cos\theta)d\theta= \pm(w\rho(f_n))(P).
\end{eqnarray*}

If we write $P_m(X,Y):=(Y+iX)^{m}(Y-iX)^{k-2-m}\in\cP(k-2)$, for $0\leq m\leq k-2$, we compute
\begin{equation}\label{norm1}
\rho(f_n)(P_m)=\int_{S^1}e^{(k-2-2m+2n)i\theta} d\theta=\left\{\begin{array}{ll}1,&n=m-\frac{k-2}{2},\\0,&n\neq m-\frac{k-2}{2},\end{array}\right.
\end{equation}
for the good normalization of the Haar measure of $S^1$. This implies that the kernel of $\rho_\pm$ is generated by $f_n$ with $|n|\geq\frac{k}{2}$, and we deduce that 
\begin{equation}\label{exseq1}
0\longrightarrow D(k)=\sum_{|n|\geq \frac{k}{2}}I(\chi_k,n)\stackrel{\iota_\pm}{\longrightarrow} \tilde \cB(\chi_k)^\pm=\sum_{n\in \Z}I(\chi_k,n) \stackrel{\rho_\pm}{\longrightarrow} V(k-2)(\pm)\longrightarrow 0,
\end{equation}
where $\iota_\pm(f_n)=(\pm 1)^{\frac{1-{\rm sign}(n)}{2}}f_n$.
\begin{remark}\label{remarkonESconmor}
    Notice that the morphism $\rho$ and, hence, the exact sequences \eqref{exseq1} depend on the choice of the Haar measure $d\theta$. In this note we have chosen $d\theta$ so that ${\rm vol}(S^1)=1$. Nevertheless, in \cite{ESsanti} it is chosen so that ${\rm vol}(S^1)=\pi$, so the second morphism in the exact sequence is $\pi\cdot\rho$ there.
\end{remark}

\subsubsection{Splittings of the $(\cG,K)$-module exact sequences}\label{splitGK}

In this section we aim to construct sections to the exact sequences \eqref{exseq1} and their dual counterparts
\[
0\longrightarrow V(k-2)(\pm)\stackrel{\rho_\pm^\vee}{\longrightarrow} \tilde \cB(\chi_{2-k})^\pm\stackrel{\imath^\vee_{\pm}}{\longrightarrow} D(k) \longrightarrow 0,
\]
where 
\[
\rho_\pm^\vee(\mu)\left(\begin{array}{cc}a&b\\c&d\end{array}\right)=\mu\left(\left|\begin{array}{cc}X&Y\\c&d\end{array}\right|^{k-2}\right)\cdot(ad-bc)^{\frac{2-k}{2}}.
\]
Of course such exact sequences do not split as $(\cG,{\rm O}(2))$-modules, but they split as ${\rm O}(2)$-modules. Hence, 
there exist unique ${\rm O}(2)$-equivariant sections of $\rho_\pm$ and $\imath^\vee_{\pm}$, namely, ${\rm O}(2)$-equivariant morphisms  
\[
s_\pm:V(k-2)(\pm)\longrightarrow \tilde\cB(\chi_k)^\pm,\qquad s'_\pm:D(k)\longrightarrow \tilde\cB(\chi_{2-k})^\pm
\]
such that $\rho_\pm\circ s_\pm={\rm id}$ and $s'_\pm\circ \imath^\vee_{\pm}={\rm id}$.
\begin{remark}\label{normalizationD}
    We still have to specify the morphism $\imath^\vee_{\pm}$ (and therefore $s_\pm'$). For now we only have that $\tilde \cB(\chi_{2-k})^\pm/\rho_\pm^\vee(V(k-2))$ is isomorphic to $ D(k)$, but the isomorphism is not described. By Schur's lemma this choice is unique up to constant.  
\end{remark}

We aim to write down $s_\pm$ explicitly.
Notice that, as $\SO(2)$-module, $V(k-2)(\pm)\simeq \sum_{\frac{2-k}{2}\leq n\leq\frac{k-2}{2}}I(\chi_k,n)$. Thus, for any $\mu\in V(k-2)(\pm)$, we must impose 
\[
s_\pm(\mu)\in  \sum_{\frac{2-k}{2}\leq n\leq\frac{k-2}{2}}I(\chi_k,n)\subset \tilde\cB(\chi_k)^\pm,\qquad s_\pm(\mu)(\kappa(\theta))=\sum_{\frac{2-k}{2}\leq n\leq\frac{k-2}{2}}a_n(\mu)\cdot e^{2in\theta}.
\]
Hence, finding $s_\pm(\mu)$ is equivalent to finding coefficients $a_n(\mu)$ for $\frac{2-k}{2}\leq n\leq\frac{k-2}{2}$. Since $\rho_\pm\circ s_\pm={\rm id}$ we have 
\[
\mu(P)=\rho_\pm(s_\pm(\mu))(P)=\int_{S^1}s_\pm(\mu)(\kappa(\theta))P(-\sin\theta,\cos\theta)d\theta=\sum_{\frac{2-k}{2}\leq n\leq\frac{k-2}{2}}a_n(\mu)\int_{S^1}e^{2ni\theta}P(-\sin\theta,\cos\theta)d\theta.
\]
Evaluating at the base $P_m(X,Y)$, $m=0,\cdots,k-2$, and using orthogonality we deduce that $a_n(\mu)=\mu\left(P_{n+\frac{k-2}{2}}\right)$. 
We conclude that
\begin{equation}\label{eqsmu}
s_\pm(\mu)=\sum_{\frac{2-k}{2}\leq n\leq\frac{k-2}{2}}\mu\left(P_{n+\frac{k-2}{2}}\right)f_n.
\end{equation}


\subsubsection{Diagonal torus and splittings
}\label{diagTor}

Write $\imath:T(\R)\hookrightarrow\PGL_2(\R)$ for the split diagonal torus 
\[
\imath(t)=\left(\begin{array}{cc}t&\\&1\end{array}\right).
\]
Write also 
\[
\cT_\R:={\rm Lie}(T(\R))=
\R \delta\subset \cG_\R,\qquad \delta:=\left(\begin{array}{cc}1&0\\0&0\end{array}\right),
\]
satisfying $\exp(t\delta)=\imath(e^t)$.
For the section $s_\pm:V(k-2)(\pm)\longrightarrow \tilde\cB(\chi_k)^\pm$ provided in  \S \ref{splitGK}, let us consider 
\[
\delta s_\pm:V(k-2)(\pm)\longrightarrow \iota_{\pm}D(k)\subset \cB(\chi_k)^\pm,\qquad \delta s_\pm(\mu):=\delta (s_\pm(\mu))-s_\pm(\delta\mu).
\]
Indeed, $\delta s_\pm(\mu)\in\iota_{\pm}D(k)$ since $\rho_\pm\circ\delta s_\pm=0$ because $\rho_\pm$ is a morphism of $(\cG,K)$-modules and $\rho_\pm\circ s_\pm={\rm id}$.

\begin{definition}\label{defmumR}
Let $\mu_m\in V(k-2)$ be such that
\[
\mu_m\left(\left|\begin{array}{cc}X& Y\\x&y\end{array}\right|^{k-2}\right)=x^{\frac{k-2}{2}-m}y^{\frac{k-2}{2}+m},\qquad \frac{2-k}{2}\leq m\leq\frac{k-2}{2}.
\]
It is easy to check that $\imath(t)\mu_m=t^{-m}\mu_m$, for all $t\in T(\R)$.
\end{definition}
\begin{remark}\label{remarkmumxy}
    Notice that $\mu_{m}$ corresponds to $x^{\frac{k-2}{2}-m}y^{\frac{k-2}{2}+m}$ under the isomorphism \eqref{dualVP}.
\end{remark}

\begin{proposition}\label{keylemma1}
For any $\frac{2-k}{2}\leq m\leq\frac{k-2}{2}$, we have that
\[
\delta s_\pm(\mu_m)=\frac{(k-1)}{2}\left((-i)^{\frac{k-2}{2}+m}f_{\frac{k}{2}}+i^{\frac{k-2}{2}+m}f_{-\frac{k}{2}}\right).
\]
\end{proposition}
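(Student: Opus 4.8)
The plan is to evaluate $\delta s_\pm(\mu_m) = \delta\bigl(s_\pm(\mu_m)\bigr) - s_\pm(\delta\mu_m)$ by first locating it among the weight vectors $f_n$ and then computing the two surviving coefficients. As already observed before Definition~\ref{defmumR}, $\rho_\pm$ is a $(\cG_\R,K)$-morphism with $\rho_\pm\circ s_\pm = {\rm id}$, so $\rho_\pm(\delta s_\pm(\mu_m)) = 0$ and hence $\delta s_\pm(\mu_m)\in\iota_\pm D(k)$, i.e. it involves only the $f_n$ with $|n|\ge k/2$ by \eqref{exseq1}. On the other hand $\delta$ corresponds, modulo the center, to $\frac12 H$ with $H=\big(\begin{smallmatrix}1&0\\0&-1\end{smallmatrix}\big)$, and $H=\frac12(E^++E^-)$ where $E^{\pm}=\big(\begin{smallmatrix}1&\pm i\\ \pm i&-1\end{smallmatrix}\big)$ are the weight $\pm2$ vectors for $\SO(2)$; in particular $\delta$ shifts the $\SO(2)$-weight $n$ of $f_n$ by exactly $\pm1$. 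Since \eqref{eqsmu} exhibits $s_\pm(\mu_m)$ and $s_\pm(\delta\mu_m)$ as supported on weights $|n|\le\frac{k-2}{2}$, it follows that $\delta s_\pm(\mu_m)$ is supported on weights $|n|\le k/2$. Combining the two bounds, $\delta s_\pm(\mu_m) = \alpha f_{k/2}+\beta f_{-k/2}$ for scalars $\alpha,\beta$ to be determined.

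Next I would compute the Lie algebra action on the $f_n$. Using $f_n\big(\big(\begin{smallmatrix}t_1&x\\0&t_2\end{smallmatrix}\big)\kappa(\theta)\big)=(t_1/t_2)^{k/2}e^{2in\theta}$ and decomposing $I+tE^{\pm}$ in the form $\big(\begin{smallmatrix}t_1&x\\0&t_2\end{smallmatrix}\big)\kappa(\theta)$ up to $O(t^2)$, a short first-order computation gives $E^{\pm}f_n=(k\pm 2n)f_{n\pm1}$, whence $\delta f_n=\frac14\bigl((k+2n)f_{n+1}+(k-2n)f_{n-1}\bigr)$. A convenient internal check is $E^-f_{k/2}=0=E^+f_{-k/2}$, reflecting that $f_{\pm k/2}$ are the extremal vectors of $D(k)$. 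In particular the $f_{k/2}$-coefficient of $\delta f_{(k-2)/2}$ and the $f_{-k/2}$-coefficient of $\delta f_{-(k-2)/2}$ both equal $\frac{k-1}{2}$, and no $\delta f_n$ with $|n|\le\frac{k-2}{2}$ other than these reaches weight $\pm k/2$; since $s_\pm(\delta\mu_m)$ also has no component in weight $\pm k/2$, we obtain from \eqref{eqsmu} that $\alpha=\frac{k-1}{2}\,\mu_m(P_{k-2})$ and $\beta=\frac{k-1}{2}\,\mu_m(P_0)$.

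It remains to compute $\mu_m(P_{k-2})=\mu_m((Y+iX)^{k-2})$ and $\mu_m(P_0)=\mu_m((Y-iX)^{k-2})$. Expanding $(Xy-Yx)^{k-2}$ and comparing with Definition~\ref{defmumR} shows that $\mu_m$ annihilates every monomial $X^iY^{k-2-i}$ with $i\neq\frac{k-2}{2}+m$ and takes the value $(-1)^{\frac{k-2}{2}-m}\binom{k-2}{\frac{k-2}{2}+m}^{-1}$ on $X^{\frac{k-2}{2}+m}Y^{\frac{k-2}{2}-m}$; inserting this into the binomial expansions of $(Y\pm iX)^{k-2}$ and using $(-1)^{\frac{k-2}{2}-m}=(-1)^{\frac{k-2}{2}+m}$ yields $\mu_m(P_{k-2})=(-i)^{\frac{k-2}{2}+m}$ and $\mu_m(P_0)=i^{\frac{k-2}{2}+m}$. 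Substituting back gives
\[
\delta s_\pm(\mu_m)=\frac{k-1}{2}\left((-i)^{\frac{k-2}{2}+m}f_{\frac{k}{2}}+i^{\frac{k-2}{2}+m}f_{-\frac{k}{2}}\right),
\]
as claimed. The only real obstacle is the bookkeeping in the identity $E^{\pm}f_n=(k\pm 2n)f_{n\pm1}$: one must be careful with the normalization of the $f_n$ and of the Haar measure on $S^1$ (Remark~\ref{remarkonESconmor}), with the sign conventions in $\kappa(\theta)$, and with the passage from $\GL_2$ to $\PGL_2$ and the two extensions $\tilde\cB(\chi_k)^{\pm}$.
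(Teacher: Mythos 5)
Your proof is correct and follows essentially the same strategy as the paper: compute $\delta f_n$ in the weight basis (your $\delta f_n=\tfrac14\bigl((k+2n)f_{n+1}+(k-2n)f_{n-1}\bigr)$ agrees with the paper's $\tfrac12(\tfrac{k}{2}+n)f_{n+1}+\tfrac12(\tfrac{k}{2}-n)f_{n-1}$), use the containment $\delta s_\pm(\mu_m)\in\iota_\pm D(k)$ to isolate the $f_{\pm k/2}$ components, and evaluate $\mu_m$ on $P_0$ and $P_{k-2}$. Your bookkeeping is slightly leaner than the paper's — you obtain the upper weight bound $|n|\le k/2$ conceptually from $\delta=\tfrac14(E^++E^-)$ and then compute only the two needed values $\mu_m(P_0),\mu_m(P_{k-2})$ directly from the binomial expansion of $(Y\pm iX)^{k-2}$, whereas the paper first derives a closed form for all $\mu_m(P_{n+\frac{k-2}{2}})$ and then observes the cancellation — but this is a presentational variant of the same argument, not a different route.
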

\begin{proof}
We compute on the one hand, 
\[
\delta \mu_m=\frac{d}{dx}{\rm exp}(x\delta)\mu_m\mid_{x=0}=\frac{d}{dx}\imath(e^x)\mu_m\mid_{x=0}=\frac{d}{dx}e^{-mx}\mid_{x=0}\mu_m=-m\mu_m.
\]
On the other hand, by equation \eqref{eqsmu} we have
\[
s_\pm(\mu_m)=\sum_{\frac{2-k}{2}\leq n\leq\frac{k-2}{2}}\mu_m\left(P_{n+\frac{k-2}{2}}\right)f_n.
\]
Notice that 
\[
\left|\begin{array}{cc}X& Y\\x&y\end{array}\right|^{k-2}=
2^{2-k}\left|\begin{array}{cc}Y+iX& Y-iX\\x-yi&-iy-x\end{array}\right|^{k-2}=2^{2-k}\sum_{j=0}^{k-2}\binom{k-2}{j}P_{j}(X,Y)z^j\bar z^{k-2-j},
\]
where $z=(x+yi)$ and $\bar z=(x-yi)$. This implies by definition that
\begin{equation}\label{eqauxmum}
\frac{x^{\frac{k-2}{2}-m}y^{\frac{k-2}{2}+m}}{2^{2-k}}=\sum_{\frac{2-k}{2}\leq n\leq\frac{k-2}{2}}\binom{k-2}{n+\frac{k-2}{2}}\mu_m\left(P_{n+\frac{k-2}{2}}\right)z^{n+\frac{k-2}{2}}\bar z^{\frac{k-2}{2}-n}.
\end{equation}
On the other side, we compute
\[
\frac{x^{\frac{k-2}{2}-m}y^{\frac{k-2}{2}+m}}{2^{2-k}}=(z+\bar z)^{\frac{k-2}{2}-m}(z-\bar z)^{\frac{k-2}{2}+m}i^{\frac{2-k}{2}-m}=i^{\frac{2-k}{2}-m}\sum_{\frac{2-k}{2}\leq n\leq\frac{k-2}{2}}C(n)\cdot z^{n+\frac{k-2}{2}}\bar z^{\frac{k-2}{2}-n}
\]
where 
\[
C(n)=\sum_{j}(-1)^{n+j+m}\binom{\frac{k-2}{2}-m}{j}\binom{\frac{k-2}{2}+m}{m-n+j}.
\]
Comparing both expressions, we deduce 
\[
\mu_m(P_{n+\frac{k-2}{2}})=i^{\frac{2-k}{2}-m}C(n)\binom{k-2}{n+\frac{k-2}{2}}^{-1}=i^{\frac{2-k}{2}-m}\sum_{j}\frac{(-1)^{n+j+m}\binom{\frac{k-2}{2}-m}{j}\binom{\frac{k-2}{2}+m}{m-n+j}}{\binom{k-2}{n+\frac{k-2}{2}}}.
\]

Since $\imath(t)\mu_m=t^{-m}\mu_m$, we have that
\[
\delta s_\pm(\mu_m)=\delta(s_\pm(\mu_m))+ms_\pm(\mu_m)=\sum_{\frac{2-k}{2}\leq n\leq\frac{k-2}{2}}\mu_m(P_{n+\frac{k-2}{2}})\left(\delta(f_n)+mf_n\right)
\]

It is easy to check that, for any {\tiny$\left(\begin{array}{cc}x&y\\m&n\end{array}\right)$}$\in\GL_2(\R)_+$ of determinant $\Delta$, 
\[
f_n\left(\begin{array}{cc}x&y\\m&n\end{array}\right)=\frac{\Delta^{\frac{k}{2}}(m^2+n^2)^{n-\frac{k}{2}}}{(mi+n)^{2n}}.
\]
This implies that,
\begin{eqnarray*}
\delta f_n(\kappa(\theta))&=&\frac{d}{dt}f_n(\kappa(\theta)\cdot{\rm exp}(t\delta))\mid_{t=0}=\frac{d}{dt}f_n\left(\kappa(\theta)\cdot\imath(e^t)\right)\mid_{t=0}=\frac{d}{dt}f_n\left(\begin{array}{cc}\ast&\ast\\-e^t\sin(\theta)&\cos(\theta)\end{array}\right)_{t=0}\\
&=&\frac{d}{dt}\left(\frac{e^{\frac{kt}{2}}\left(e^{2t}\sin^2\theta+\cos^2\theta\right)^{n-\frac{k}{2}}}{\left(\cos\theta-e^{t}i\sin\theta \right)^{2n}}\right)_{t=0}=e^{2ni\theta}\left(\frac{k}{2}+(2n-k)\sin^2\theta+2nie^{i\theta}\sin\theta\right)
\end{eqnarray*}
Hence,
\[
\delta f_n=\frac{1}{2}\left(\frac{k}{2}+n\right)f_{n+1}+\frac{1}{2}\left(\frac{k}{2}-n\right)f_{n-1}.
\]
Thus, 
\[
\delta s_\pm(\mu_m)=\sum_{\frac{2-k}{2}\leq n\leq\frac{k-2}{2}}\mu_m(P_{n+\frac{k-2}{2}})\left(\frac{1}{2}\left(\frac{k}{2}+n\right)f_{n+1}+\frac{1}{2}\left(\frac{k}{2}-n\right)f_{n-1}+mf_n\right).
\]
Recall that $\delta s_\pm(\mu_m)\in \iota_\pm D(k)$, hence $\delta s_\pm(\mu_m)\in I(\chi_k,-\frac{k}{2})\oplus  I(\chi_k,\frac{k}{2})$. This implies that almost all terms cancel out and the only terms that survive are
\[
\delta s_\pm(\mu_m)=\frac{1}{2}\mu_m(P_{k-2})(k-1)f_{\frac{k}{2}}+\frac{1}{2}\mu_m(P_{0})(k-1)f_{-\frac{k}{2}}.
\]
Using the above computations 
\begin{eqnarray*}
\mu_m(P_{0})&=&i^{\frac{2-k}{2}-m}\sum_{j}(-1)^{\frac{k-2}{2}+j+m}\binom{\frac{k-2}{2}-m}{j}\binom{\frac{k-2}{2}+m}{m+\frac{k-2}{2}+j}=i^{\frac{k-2}{2}+m},\\
\mu_m(P_{k-2})&=&i^{\frac{2-k}{2}-m}\sum_{j}(-1)^{\frac{k-2}{2}+j+m}\binom{\frac{k-2}{2}-m}{j}\binom{\frac{k-2}{2}+m}{m-\frac{k-2}{2}+j}=i^{\frac{2-k}{2}-m},
\end{eqnarray*}
and the result follows.
\end{proof}

\subsubsection{Inner products and pairings
}\label{calcI(chi)}

Let us consider the $\PGL_2(\R)$-invariant pairing
\[
\langle\;,\;\rangle_\cB:\cB(\chi_k)^\pm\times \cB(\chi_{2-k})^\pm\longrightarrow\C;\qquad \langle f,h\rangle_\cB:=\int_{S^1}f(\kappa(\theta))\cdot h(\kappa(\theta))d\theta.
\]
It satisfies the property
\[
\langle \imath_\pm f,\rho_\pm^\vee\mu\rangle_\cB=0, \qquad\mbox{for all }\quad f\in D(k),\;\mu\in V(k-2)(\pm).
\]
Thus, it provides (the unique up to constant) $\PGL_2(\R)$-invariant bilinear inner products on $D(\underline{k})$ and $V(\underline{k}-2)(\pm)$: 
\begin{eqnarray*}
\langle\;,\;\rangle:D(k)\times D(k)\longrightarrow\C;&\qquad& \langle f,h\rangle:=\langle  \iota_\pm f,s'_\pm h\rangle_\cB,\\
\langle\;,\;\rangle_V:V(k-2)(\pm)\times V(k-2)(\pm)\longrightarrow\C;&\qquad& \langle \mu_1,\mu_2\rangle_V:=\langle s_\pm(\mu_1),\rho_\pm^\vee(\mu_2)\rangle_\cB.
\end{eqnarray*}
\begin{remark}
    As discussed in remark \ref{normalizationD}, the morphism $s_\pm'$ is not completely specified. Notice that, since $w(f_{\frac{k}{2}})=\pm (-1)^{\frac{k-2}{2}}f_{-\frac{k}{2}}$, we have that $s'_\pm(f_{\frac{k}{2}})=Cf_{\frac{k}{2}}$ and $s'_\pm(f_{-\frac{k}{2}})=Cf_{-\frac{k}{2}}$  for some non-zero constant $C$. We will normalize $s_\pm'$ so that $C=1$. Notice that this implies $\langle f_{\frac{k}{2}},f_{-\frac{k}{2}}\rangle_\cB=1$.   
\end{remark}
\begin{remark}\label{rempairVR}
Notice that the isomorphism \eqref{dualVP} gives rise to a different $\GL_2(\R)$-invariant pairing 
\[
\langle\;,\;\rangle':V(k-2) (\pm)\times V(k-2) (\pm)\longrightarrow\C,\qquad \langle\mu_1,\mu_2\rangle'=\mu_2\mu_1\left(\left|\begin{array}{cc}X_1&Y_1\\X_2&Y_2\end{array}\right|^{k-2}\right).
\]
Given $\mu_m,\mu_{m'}$ as in Definition \ref{defmumR}, we have that
\begin{equation}\label{relmummum}
\langle\mu_m,\mu_{m'}\rangle'=\left\{\begin{array}{ll}
(-1)^{m+\frac{k-2}{2}}\binom{k-2}{\frac{k-2}{2}-m}^{-1},&m=-m',\\
0,&m\neq-m'.
\end{array}\right.
\end{equation}
By Schur's Lemma both pairings $\langle\cdot,\cdot\rangle'$ and $\langle\cdot,\cdot\rangle_V$ must coincide up to constant. To compute such a constant let us consider $(y^{k-2})^\vee\in V(k-2)(\pm)$, the element of the dual basis of $\{x^iy^{k-2-i}\}_i$ corresponding to $y^{k-2}$. It is clear that 
\[
\left\langle\mu_{\frac{k-2}{2}},(y^{k-2})^\vee\right\rangle'=(y^{k-2})^\vee\mu_{\frac{k-2}{2}}\left(\left|\begin{array}{cc}X&Y\\x&y\end{array}\right|^{k-2}\right)=(y^{k-2})^\vee\left(y^{k-2}\right)=1.
\]
On the other side, 
\begin{eqnarray*}
\left\langle\mu_{\frac{k-2}{2}},(y^{k-2})^\vee\right\rangle_V&=&\int_{S^1}s_\pm\left(\mu_{\frac{k-2}{2}}\right)(\kappa(\theta))\cdot \rho_\pm^\vee((y^{k-2})^\vee)(\kappa(\theta))d\theta\\
&=&\sum_{\frac{2-k}{2}\leq n\leq \frac{k-2}{2}}\mu_{\frac{k-2}{2}}(P_{n+\frac{k-2}{2}})\int_{S^1}e^{2in\theta}\cdot (y^{k-2})^\vee\left(\left|\begin{array}{cc}x&y\\-\sin\theta&\cos\theta\end{array}\right|^{k-2}\right)d\theta\\
&=&\sum_{\frac{2-k}{2}\leq n\leq \frac{k-2}{2}}\mu_{\frac{k-2}{2}}(P_{n+\frac{k-2}{2}})(2i)^{2-k}\binom{k-2}{\frac{k-2}{2}-n}(-1)^{\frac{k-2}{2}-n}=1,
\end{eqnarray*}
by equation \eqref{eqauxmum}.
This implies that both pairings coincide, and relations \eqref{relmummum} also hold for $\langle\cdot,\cdot\rangle_V$.
\end{remark}

Given the morphism $\delta s_\pm:V(k-2)(\pm)\longrightarrow \iota_{\pm}D(k)$, 
we aim to compute
\[
\cI(\chi,m):=\int_{T(\R)}\chi(t)\left\langle \imath(t)\delta s_\pm(\mu_m),\delta s_\pm(\mu_{-m})\right\rangle_{\cB} t^md^\times t, 
\]
where $\chi$ is a locally constant character and $dt^\times$ is the measure of \eqref{defdtx}.

Recall that $\delta s_\pm(\mu_m):=\delta (s_\pm(\mu_m))-s_\pm(\delta\mu_m)$. Moreover, 
\[
\delta\mu_m=\frac{d}{dx}(\imath(e^x)\mu_m)\mid_{x=0}=\frac{d}{dx}(e^{-mx}\mu_m)\mid_{x=0}=-m\cdot\mu_m.
\] 
Hence, if we consider the function 
\[
F(t)=t^m\langle \imath(t)s_\pm(\mu_m),s'_\pm{\delta s_\pm(\mu_{-m})}\rangle_\cB,
\]
we compute
\begin{eqnarray*}
t\frac{d}{dt}F(t)&=&mt^m\langle \imath(t) s_\pm(\mu_m),s'_\pm{\delta s_\pm(\mu_{-m})}\rangle_\cB+t^m\frac{d}{dx}\langle \imath(te^x) s_\pm(\mu_m),s'_\pm{\delta s_\pm(\mu_{-m})}\rangle_\cB\mid_{x=0}\\
&=&t^m\left(-\langle \imath(t) s_\pm(\delta\mu_m),s'{\delta s_\pm(\mu_{-m})}\rangle_\cB+\langle \imath(t) \delta(s_\pm(\mu_m)),s'_\pm{\delta s_\pm(\mu_{-m})}\rangle_\cB\right)\\
&=&t^m\langle \imath(t)\delta s_\pm(\mu_m),\delta s_\pm(\mu_{-m})\rangle_\cB.
\end{eqnarray*}

\begin{proposition}\label{calcinfty0}
We have that 
\[
F(\infty)-F(0^+)=(-1)^{m}2^{k-1}(k-1)\binom{k-2}{\frac{k-2}{2}-m}^{-1}.
\]
where $F(\ast):=\lim_{t\to\ast} F(t)$.
\end{proposition}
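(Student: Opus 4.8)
The plan is to recover $F(\infty)-F(0^{+})$ by integrating $F'$ over $(0,\infty)$. By the computation carried out immediately before the statement,
\[
t\,\frac{d}{dt}F(t)=t^{m}\,\bigl\langle \imath(t)\,\delta s_{\pm}(\mu_{m}),\ \delta s_{\pm}(\mu_{-m})\bigr\rangle_{\cB},
\]
and by Proposition~\ref{keylemma1} we have $\delta s_{\pm}(\mu_{\pm m})=\tfrac{k-1}{2}\bigl((-i)^{\frac{k-2}{2}\pm m}f_{k/2}+i^{\frac{k-2}{2}\pm m}f_{-k/2}\bigr)$. Using the closed formula for $f_{\nu}$ on a matrix from the proof of Proposition~\ref{keylemma1}, together with $\kappa(\theta)\imath(t)=\big(\begin{smallmatrix}t\cos\theta&\sin\theta\\-t\sin\theta&\cos\theta\end{smallmatrix}\big)$, one gets $f_{k/2}(\kappa(\theta)\imath(t))=t^{k/2}(\cos\theta-ti\sin\theta)^{-k}$ and $f_{-k/2}(\kappa(\theta)\imath(t))=t^{k/2}(\cos\theta+ti\sin\theta)^{-k}$, while $f_{\pm k/2}(\kappa(\theta))=e^{\pm ik\theta}$. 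Hence $\langle \imath(t)\delta s_{\pm}(\mu_{m}),\delta s_{\pm}(\mu_{-m})\rangle_{\cB}$ equals $\tfrac{(k-1)^{2}}{4}t^{k/2}$ times a linear combination, with coefficients products of powers of $\pm i$, of the four integrals $\int_{S^{1}}(\cos\theta\mp ti\sin\theta)^{-k}e^{\pm ik\theta}\,d\theta$.

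The core step is to evaluate these four integrals. Writing $\cos\theta\mp ti\sin\theta=\tfrac12 e^{-i\theta}\bigl((1\mp t)e^{2i\theta}+(1\pm t)\bigr)$ and substituting $w=e^{2i\theta}$ (which traverses $|w|=1$ once as $\theta$ runs over a half period, turning $\int_{S^{1}}g(e^{2i\theta})\,d\theta$ into $\tfrac{1}{2\pi i}\oint_{|w|=1}g(w)\,\tfrac{dw}{w}$), each integral becomes the integral of a rational function of $w$. Its only finite pole sits at $w=-\tfrac{1\pm t}{1\mp t}$: outside $|w|=1$ for one choice of sign and inside, with multiplicity $k$, for the other — this is the only point at which the cases $0<t<1$ and $t>1$ need be treated separately, with the same outcome in both. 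A residue computation then shows that two of the four integrals vanish and the other two each equal $\bigl(\tfrac{2}{1+t}\bigr)^{k}$. Combining this with $(-i)^{\frac{k-2}{2}+m}i^{\frac{k-2}{2}-m}=i^{\frac{k-2}{2}+m}(-i)^{\frac{k-2}{2}-m}=(-1)^{m}$ (valid because $k$ is even), one obtains
\[
\frac{dF}{dt}(t)=(-1)^{m}\,\frac{(k-1)^{2}}{2}\cdot\frac{2^{k}\,t^{\,m+k/2-1}}{(1+t)^{k}}.
\]

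Since $\tfrac{2-k}{2}\le m\le\tfrac{k-2}{2}$ forces $1\le m+\tfrac{k}{2}$ and $m-\tfrac{k}{2}-1\le-2$, this function is bounded as $t\to0^{+}$ and is $O(t^{-2})$ as $t\to\infty$, hence integrable on $(0,\infty)$; in particular $F$ has finite one-sided limits at $0^{+}$ and at $\infty$ and $F(\infty)-F(0^{+})=\int_{0}^{\infty}\tfrac{dF}{dt}\,dt$. By the Beta integral $\int_{0}^{\infty}t^{a-1}(1+t)^{-k}\,dt=\Gamma(a)\Gamma(k-a)/\Gamma(k)$ with $a=m+\tfrac{k}{2}$,
\[
F(\infty)-F(0^{+})=(-1)^{m}\,\frac{(k-1)^{2}}{2}\,2^{k}\,\frac{\Gamma\bigl(\tfrac{k}{2}+m\bigr)\Gamma\bigl(\tfrac{k}{2}-m\bigr)}{\Gamma(k)},
\]
and rewriting $\Gamma(\tfrac{k}{2}\pm m)=\bigl(\tfrac{k-2}{2}\pm m\bigr)!$ and $\Gamma(k)=(k-1)(k-2)!$ converts $(k-1)^{2}\Gamma(\tfrac{k}{2}+m)\Gamma(\tfrac{k}{2}-m)/\Gamma(k)$ into $(k-1)\binom{k-2}{\frac{k-2}{2}-m}^{-1}$, which is exactly the asserted value.

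I expect the residue bookkeeping of the second paragraph to be the main obstacle: deciding which two of the four contour integrals vanish (this hinges on the position of the pole $w=-\tfrac{1\pm t}{1\mp t}$ relative to $|w|=1$, and requires differentiating through a pole of order $k$ at a nonzero point), and propagating the powers of $\pm i$ without error. The matrix evaluation of $f_{\pm k/2}$, the integrability estimate, and the final Gamma/factorial rewriting are routine.
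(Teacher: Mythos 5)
Your proof is correct, and it takes a genuinely different route from the paper's. The paper evaluates $F(t)$ itself: it writes out $\imath(t^{-1})s'_\pm\delta s_\pm(\mu_{-m})(\kappa(\theta))$ from proposition \ref{keylemma1}, pairs it against the full section $s_\pm(\mu_m)$ in $\langle\,\cdot\,,\,\cdot\,\rangle_\cB$, expands $(\sin^2\theta+t^2\cos^2\theta)^{-1}$ as a geometric series in the two regimes $t\to 0^+$ and $t\to\infty$, and then recognizes the resulting Fourier coefficients as the pairings $\langle\mu_m,\mu_{n-\frac{k-2}{2}}\rangle_V$, which by remark \ref{rempairVR} vanish except for a single index; the difference $F(\infty)-F(0^+)$ then collapses to a fixed-parity binomial sum equal to $2^{k-1}$. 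You instead compute the derivative $F'(t)$ in closed form: since $t\,F'(t)$ pairs $\imath(t)\delta s_\pm(\mu_m)$ against $s'_\pm\delta s_\pm(\mu_{-m})$, both of which are two-term combinations of $f_{\pm k/2}$ by proposition \ref{keylemma1}, the pairing reduces to four contour integrals, and residue calculus shows two vanish (the unique pole lies outside $|w|=1$ for all $t>0$) while the other two equal $(2/(1+t))^k$, giving $dF/dt=(-1)^m\tfrac{(k-1)^2}{2}\,2^k\,t^{m+k/2-1}(1+t)^{-k}$, after which the Beta integral finishes. Your route buys a cleaner finish — a single Euler integral, no separate asymptotic analysis at $0^+$ and $\infty$, and no appeal to the binomial-sum identity — and it never needs to touch $s_\pm(\mu_m)$ or the pairing $\langle\cdot,\cdot\rangle_V$; the paper's route avoids contour integration and stays within the real-variable pairing machinery already set up in \S\ref{splitGK} and \S\ref{calcI(chi)}. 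Both yield the same answer, of course.
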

\begin{proof}
By proposition \ref{keylemma1}, we have that 
\[
s'_\pm{\delta s_\pm(\mu_{-m})}=\frac{(k-1)}{2}\left((-i)^{\frac{k-2}{2}-m}f_{\frac{k}{2}}+i^{\frac{k-2}{2}-m}f_{-\frac{k}{2}}\right),
\]
where now $f_{\pm\frac{k}{2}}$ are the corresponding functions in $\cB(\chi_{2-k})^\pm$. In fact, for any $f_n$ in $\cB(\chi_{2-k})^\pm$,
\[
f_n\left(\begin{array}{cc}x&y\\r&s\end{array}\right)=\Delta^{\frac{2-k}{2}}\frac{(s-ri)^{n-\frac{2-k}{2}}}{(s+ri)^{n+\frac{2-k}{2}}},\qquad \Delta=(xs-ry)>0,
\]
This implies that, if $t>0$,
\begin{eqnarray*}
&&\imath(t^{-1})s'_\pm{\delta s_\pm(\mu_{-m})}(\kappa(\theta))=s'_\pm{\delta s_\pm(\mu_{-m})}\left(\begin{array}{cc}t^{-1}\cos\theta&\sin\theta\\-t^{-1}\sin\theta&\cos\theta\end{array}\right)=\\
&=&\frac{(k-1)t^{\frac{k-2}{2}}}{2}\left((-i)^{\frac{k-2}{2}-m}\frac{(\cos\theta+t^{-1}\sin\theta i)^{k-1}}{(\cos\theta-t^{-1}\sin\theta i)}+i^{\frac{k-2}{2}-m}\frac{(\cos\theta-t^{-1}\sin\theta i)^{k-1}}{(\cos\theta+t^{-1}\sin\theta i)}\right)\\
&=&\frac{(k-1)t^{\frac{k-2}{2}}}{i^{\frac{k-2}{2}-m}}\sum_{0\leq j\equiv \frac{k-2}{2}-m\;({\rm mod}\;2)}\binom{k}{j}\left(\frac{i}{t}\right)^j\frac{\cos\theta^{k-j}\sin\theta^j}{(\cos^2\theta+t^{-2}\sin^2\theta)}
\end{eqnarray*}
Hence we have that $F(t)=t^m\langle s_\pm(\mu_m),\imath(t^{-1})s'_\pm{\delta s_\pm(\mu_{-m})}\rangle_\cB$ is given by
\[
F(t)=(-1)^{\frac{k}{2}}(it)^{\frac{2-k}{2}+m}(k-1)\sum_{0\leq j\equiv \frac{k-2}{2}-m\;({\rm mod}\;2)}\binom{k}{j}\int_{S^1}s_\pm(\mu_m)(\kappa(\theta))\frac{(it)^j(-\sin\theta)^{k-j}\cos\theta^j}{(\sin^2\theta+t^2\cos^2\theta)}d\theta.
\]

If $t\to 0^+$, we have that $(\sin^2\theta+t^2\cos^2\theta)^{-1}=\sin^{-2}\theta\sum_{s\geq 0}(it\cos\theta\sin^{-1}\theta)^{2s}$, hence 
\[
F(t)=(-1)^{\frac{k}{2}}(it)^{\frac{2-k}{2}+m}(k-1)\sum_{0\leq j\equiv \frac{k-2}{2}+m\;({\rm mod}\;2)}\binom{k}{j}\sum_{s\geq 0}C(j+2s)(it)^{j+2s}
\]
where 
\[
C(n):=\int_{S^1}s_\pm(\mu_m)(\kappa(\theta))(-\sin\theta)^{k-2-n}\cos\theta^{n}d\theta.
\]
When $0\leq n\leq k-2$, we have
\[
C(n)=\int_{S^1}s_\pm(\mu_m)(\kappa(\theta))\rho_\pm^\vee\left(\mu_{n-\frac{k-2}{2}}\right)(\kappa(\theta))d\theta=\left\langle \mu_m,\mu_{n-\frac{k-2}{2}}\right\rangle_V.
\]
Hence, by remark \ref{rempairVR} and equation \eqref{relmummum}, we have that $C(n)=0$ except when $-m=n-\frac{k-2}{2}$ where $C(\frac{k-2}{2}-m)=(-1)^{m+\frac{k-2}{2}}\binom{k-2}{\frac{k-2}{2}-m}^{-1}$. This implies that
\[
F(0^+)=(-1)^{m-1}(k-1)\binom{k-2}{\frac{k-2}{2}-m}^{-1}\sum_{s\geq 0}\binom{k}{\frac{k-2}{2}-m-2s}.
\]

If $t\to \infty$, we have that $(\sin^2\theta+t^2\cos^2\theta)^{-1}=(t\cos\theta)^{-2}\sum_{s\geq 0}((it\cos\theta)^{-1}\sin\theta)^{2s}$, hence 
\[
F(t)=-(-1)^{\frac{k}{2}}(it)^{\frac{2-k}{2}+m}(k-1)\sum_{0\leq j\equiv \frac{k-2}{2}+m\;({\rm mod}\;2)}\binom{k}{j}\sum_{s\geq 0}C(j-2-2s)(it)^{j-2-2s}.
\]
Using the same calculations, we obtain
\[
F(\infty)=(-1)^{m}(k-1)\binom{k-2}{\frac{k-2}{2}-m}^{-1}\sum_{s\geq 0}\binom{k}{\frac{k-2}{2}-m+2s+2}.
\]

We conclude that 
\[
F(\infty)-F(0^+)=\frac{(-1)^{m}(k-1)}{\binom{k-2}{\frac{k-2}{2}-m}}\sum_{s\in\Z}\binom{k}{\frac{k-2}{2}-m+2s}=\frac{(-1)^{m}\frac{(k-1)}{2}}{\binom{k-2}{\frac{k-2}{2}-m}}\left((1+1)^k+\frac{(1-1)^k}{(-1)^{\frac{k-2}{2}+m}}\right)=\frac{2^{k-1}(-1)^{m}(k-1)}{\binom{k-2}{\frac{k-2}{2}-m}},
\]
and the result follows.
\end{proof}

We write $ \tilde\cB(\chi_k)^\lambda$, for $\lambda=\pm 1$. The following result computes the local integral we are looking for: 

\begin{theorem}\label{calcLI1}
Given a locally constant character $\chi:T(\R)\rightarrow\pm 1$, we consider 
\[
\cI(\chi,m):=\int_{T(\R)}\chi(t)\langle\imath(t)\delta s_\lambda(\mu_m),\delta s_\lambda(\mu_{-m})\rangle t^md^\times t,
\]
where $d^\times t$ is the Haar measure \eqref{defdtx}. Then 
\[
\cI(\chi,m)=\left\{\begin{array}{ll}(-1)^{m}2^{k}(k-1)\binom{k-2}{\frac{k-2}{2}-m}^{-1},&\chi(-1)=\lambda,\\0,&\chi(-1)=\lambda.\end{array}\right.
\]
\end{theorem}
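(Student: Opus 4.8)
The plan is to reduce the integral over $T(\R)=\R^\times$ to the integral over the identity component $T(\R)_+=\R_+$, which is exactly what Proposition~\ref{calcinfty0} computes, and then to control the contribution of the other component $\imath(-1)T(\R)_+$ by an equivariance argument. First I would split $T(\R)=T(\R)_+\sqcup\imath(-1)T(\R)_+$; by \eqref{defdtx} the Haar measure $d^\times t$ restricts to $dr/r$ (with $r=|t|$) on each component, and $\chi$ is trivial on $\R_+$ since $\R_+$ is connected, $\chi$ is locally constant, and $\chi(1)=1$. On $T(\R)_+$ the relation $t\frac{d}{dt}F(t)=t^m\langle\imath(t)\delta s_\lambda(\mu_m),\delta s_\lambda(\mu_{-m})\rangle$ established just before Proposition~\ref{calcinfty0} turns the integrand $\langle\imath(t)\delta s_\lambda(\mu_m),\delta s_\lambda(\mu_{-m})\rangle\,t^m\,d^\times t$ into $dF$, so the $T(\R)_+$-contribution equals $\int_0^\infty F'(t)\,dt=F(\infty)-F(0^+)$, which by Proposition~\ref{calcinfty0} is $(-1)^m2^{k-1}(k-1)\binom{k-2}{\frac{k-2}{2}-m}^{-1}$.

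For the other component I would substitute $t=\imath(-1)r$ with $r\in\R_+$, using that in $\PGL_2(\R)$ we have $\imath(-1)=\big(\begin{smallmatrix}-1&\\&1\end{smallmatrix}\big)=\big(\begin{smallmatrix}1&\\&-1\end{smallmatrix}\big)=w\in K$, so that $\imath(t)=w\,\imath(r)$, $\chi(t)=\chi(-1)$ and $t^m=(-1)^mr^m$. Since the pairing $\langle\,,\,\rangle$ on $D(\underline k)$ is $\PGL_2(\R)$-invariant (\S\ref{calcI(chi)}) and $w^2=1$, one may move $w$ to the second argument: $\langle w\,\imath(r)\delta s_\lambda(\mu_m),\delta s_\lambda(\mu_{-m})\rangle=\langle\imath(r)\delta s_\lambda(\mu_m),w\,\delta s_\lambda(\mu_{-m})\rangle$. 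From the explicit description of $\delta s_\lambda(\mu_{-m})$ in Proposition~\ref{keylemma1} and the action $w f_n=\lambda(-1)^{\frac{k-2}{2}}f_{-n}$ on $\tilde\cB(\chi_k)^\lambda$ (\S\ref{GKR}) a short computation gives $w\,\delta s_\lambda(\mu_{-m})=\lambda(-1)^m\,\delta s_\lambda(\mu_{-m})$; equivalently one can use that $\delta s_\lambda$ commutes with $w$ together with $\imath(-1)\mu_{-m}=\lambda(-1)^m\mu_{-m}$ in $V(k-2)(\lambda)$, the $(-1)^m$ coming from Definition~\ref{defmumR} and the $\lambda$ from the twist at the determinant-$(-1)$ element. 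Therefore the $\imath(-1)T(\R)_+$-contribution equals $\chi(-1)\cdot(-1)^m\cdot\lambda(-1)^m$ times the $T(\R)_+$-integral, that is, $\chi(-1)\lambda$ times it, the two factors $(-1)^m$ cancelling.

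Adding the two pieces yields
\[
\cI(\chi,m)=\bigl(1+\chi(-1)\lambda\bigr)\bigl(F(\infty)-F(0^+)\bigr),
\]
which vanishes when $\chi(-1)=-\lambda$ and equals $2\bigl(F(\infty)-F(0^+)\bigr)=(-1)^m2^k(k-1)\binom{k-2}{\frac{k-2}{2}-m}^{-1}$ when $\chi(-1)=\lambda$; this is the assertion (correcting the evident typo, the second alternative of the displayed formula should read $\chi(-1)\neq\lambda$). The main obstacle is the sign bookkeeping in the second step: one must check that the change of variables contributes exactly one $(-1)^m$ (from $t^m$) while the $w$-equivariance of $\delta s_\lambda$ combined with the $\lambda$-twist on $V(k-2)(\lambda)$ contributes $\lambda(-1)^m$ (and not $\lambda$ or $(-1)^m$ alone), so that the two powers of $-1$ cancel and the vanishing is governed precisely by whether $\chi(-1)$ agrees with $\lambda$; this is also where one genuinely needs invariance of the pairing on $D(\underline k)$ under all of $\PGL_2(\R)$ and not merely under its identity component.
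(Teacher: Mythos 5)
Your proposal is correct and takes essentially the same approach as the paper's proof: decomposing $T(\R)=\R_+\sqcup(-\R_+)$, using the $w$-eigenvalue of $\delta s_\lambda(\mu_{\pm m})$ (the paper's equation \eqref{eqauxwdeltas}, derived from Proposition~\ref{keylemma1}) to show the contribution of the non-identity component is $\chi(-1)\lambda$ times that of $\R_+$, and then identifying the $\R_+$ integral as $F(\infty)-F(0^+)$ via Proposition~\ref{calcinfty0}. The only cosmetic difference is that you move $w$ to the second slot of the pairing by $\PGL_2(\R)$-invariance, while the paper acts with $w$ directly on the first argument; both rely on the same identity $w\,\delta s_\lambda(\mu_{\pm m})=\lambda(-1)^m\delta s_\lambda(\mu_{\pm m})$, and you are right that the second alternative in the stated formula should read $\chi(-1)\neq\lambda$.
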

\begin{proof}
By the explicit description of $\delta s_\lambda(\mu_m)$ given in proposition \ref{keylemma1}, we compute
\begin{equation}\label{eqauxwdeltas}
    w\delta s_\lambda(\mu_m)=\lambda(-1)^{m}\delta s_\lambda(\mu_m),\qquad w=\left(\begin{array}{cc}1&\\&-1\end{array}\right).
\end{equation}
This implies that, for $t<0$,
\begin{eqnarray*}
\chi(t)\langle\imath(t)\delta s_\lambda(\mu_m),\delta s_\lambda(\mu_{-m})\rangle t^m&=&\chi(-1)\langle\imath(-t)w\delta s_\lambda(\mu_m),\delta s_\lambda(\mu_{-m})\rangle t^m\\
&=&\chi(-1)\lambda\langle\imath(-t)\delta s_\lambda(\mu_m),\delta s_\lambda(\mu_{-m})\rangle (-t)^m.
\end{eqnarray*}
Since $d^\times t=|t|^{-1}dt$, we obtain
\[
I(\chi,m)=\left(1+\chi(-1)\lambda\right)\int_0^\infty t^m\langle\imath(t)\delta s_\lambda(\mu_m),\delta s_\lambda(\mu_{-m})\rangle d^\times t.
\]
We have seen that 
$t\frac{dF(t)}{dt}=t^m\langle\imath(t)\delta s_\lambda(\mu_m),\delta s_\lambda(\mu_{-m})\rangle$.
Thus, by the above calculations,
\[
I(\chi,m)=\left(1+\chi(-1)\lambda\right)\int_0^{\infty}t\frac{dF(t)}{dt}\frac{dt}{t}=\left(1+\chi(-1)\lambda\right)\left(F(\infty)-F(0^+)\right)
\]
hence the result follows from proposition \ref{calcinfty0}.
\end{proof}


\subsubsection{Pairings of canonical sections}

Let us consider the canonical $\PGL_2(\R)$-invariant element
\[
\Upsilon=\iota^{-1}\left|\begin{array}{cc}
    x_1 & y_1 \\
     x_2 &  y_2
\end{array}\right|^{k-2}\in V(k-2)\otimes V(k-2),
\]
where $\iota:V(k-2)\otimes V(k-2)\stackrel{\simeq}{\rightarrow}\cP(k-2)\otimes \cP(k-2)$ is the isomorphism of \eqref{dualVP}.
Since we have the morphism $\delta s_+:V(k-2)\longrightarrow D(k)$, we can consider $\delta s_+(\Upsilon)\in D(k)\otimes D(k)$ and apply the inner product $\langle\;;\;\rangle$ defined in the beginning of \S \ref{calcI(chi)}. Similarly, we can apply the pairing $\langle\;,\;\rangle'$ of remark \ref{rempairVR} to $\Upsilon$. 
\begin{lemma}\label{lemoonpairupsilon1}
    We have that 
    \[
    \frac{\langle\delta s_+(\Upsilon)\rangle}{\langle\Upsilon\rangle'}=(k-1)\cdot2^{k-3}.
    \]
\end{lemma}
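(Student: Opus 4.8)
The strategy is to compute both sides explicitly by decomposing $\Upsilon$ in a suitable basis of $V(k-2)\otimes V(k-2)$ and then applying the two pairings term by term. First I would expand $\Upsilon$ in terms of the vectors $\mu_m$ of Definition \ref{defmumR}. Using the identity \eqref{eqauxmum} (or directly the expansion of $\left|\begin{smallmatrix}X&Y\\x&y\end{smallmatrix}\right|^{k-2}$ in the basis $P_j$), one gets $\Upsilon=\sum_{m}c_m\,\mu_m\otimes\mu_{-m}$ for explicit constants $c_m$ determined by the binomial coefficients appearing there; this is forced by $\PGL_2(\R)$-invariance, since $\imath(t)\mu_m=t^{-m}\mu_m$ means only the $\mu_m\otimes\mu_{-m}$ terms can survive under the diagonal torus, and the $\SO(2)$-structure pins down everything else.

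Next I would evaluate $\langle\Upsilon\rangle'=\langle\Upsilon,\cdot\rangle'$ — more precisely the number obtained by feeding $\Upsilon$ into $\langle\;,\;\rangle'$ of Remark \ref{rempairVR}, which by \eqref{relmummum} reduces to $\sum_m c_m^2\cdot(-1)^{m+\frac{k-2}{2}}\binom{k-2}{\frac{k-2}{2}-m}^{-1}$, while for the left side I would use Proposition \ref{keylemma1}, which gives $\delta s_+(\mu_m)=\frac{k-1}{2}\big((-i)^{\frac{k-2}{2}+m}f_{k/2}+i^{\frac{k-2}{2}+m}f_{-k/2}\big)$ explicitly, together with the normalization $\langle f_{k/2},f_{-k/2}\rangle_\cB=1$ (and the vanishing of the diagonal $\langle f_{k/2},f_{k/2}\rangle$, $\langle f_{-k/2},f_{-k/2}\rangle$ pairings, since those pair a weight-$k$ vector with a weight-$(-k)$ vector under $\SO(2)$). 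This turns $\langle\delta s_+(\Upsilon)\rangle$ into $\sum_m c_m^2\cdot\frac{(k-1)^2}{4}\cdot\big((-i)^{\frac{k-2}{2}+m}i^{\frac{k-2}{2}-m}+i^{\frac{k-2}{2}+m}(-i)^{\frac{k-2}{2}-m}\big)$, and the bracket simplifies to $2(-1)^{m+\frac{k-2}{2}}$ after collecting powers of $i$.

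Taking the ratio, the sums $\sum_m c_m^2(-1)^{m+\frac{k-2}{2}}\binom{k-2}{\frac{k-2}{2}-m}^{-1}$ should cancel exactly — this is the key point to verify — leaving $\frac{(k-1)^2/4\cdot 2}{1}\cdot$(a power of $2$ coming from the normalizing factor $2^{2-k}$ in \eqref{eqauxmum} relating $c_m$ to the binomials). Tracking that power of $2$ carefully is what should produce $(k-1)\cdot 2^{k-3}$. In fact, rather than computing the full sum, a cleaner route is: since $\langle\;,\;\rangle'$ and $\langle\;,\;\rangle_V$ coincide on $V(k-2)$ (Remark \ref{rempairVR}) and $\delta s_+$ is built from the $\SO(2)$-equivariant section $s_+$, one can argue that the map $\mu\mapsto\langle\delta s_+(\mu),\cdot\rangle$ is, up to a universal scalar, the identification $V(k-2)\simeq V(k-2)^\vee$ coming from $\langle\;,\;\rangle_V$; then the scalar is detected on a single well-chosen vector, e.g. $\mu_{\frac{k-2}{2}}$, using $\langle\mu_{\frac{k-2}{2}},(y^{k-2})^\vee\rangle'=1$ already computed in Remark \ref{rempairVR} and the explicit value of $\delta s_+(\mu_{\frac{k-2}{2}})$ from Proposition \ref{keylemma1}.

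The main obstacle I anticipate is bookkeeping of the normalization constants: there are several places where factors of $2^{k-2}$, $\binom{k-2}{\cdot}$, powers of $i$, and the Haar-measure normalization on $S^1$ (${\rm vol}(S^1)=1$, cf.\ Remark \ref{remarkonESconmor}) all enter, and it is easy to be off by a power of $2$ or a sign. I would guard against this by cross-checking the final formula in the smallest case $k=2$ (where $V(0)$ is one-dimensional, $\Upsilon$ is essentially a constant, $\delta s_+$ lands in $D(2)$, and the claimed ratio is $1\cdot 2^{-1}=1/2$), which should be a quick independent sanity check of all the constants.
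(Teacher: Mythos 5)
Your direct--computation plan is, in substance, the paper's proof: expand $\Upsilon$ in the torus--eigenbasis $\mu_m\otimes\mu_{-m}$, then feed Proposition~\ref{keylemma1} and the normalization $\langle f_{k/2},f_{-k/2}\rangle_\cB=1$ into the numerator, and \eqref{relmummum} into the denominator. Three execution points need care, though. First, if $\Upsilon=\sum_m c_m\,\mu_m\otimes\mu_{-m}$, then applying a bilinear form to $\Upsilon$ gives $\sum_m c_m\langle\mu_m,\mu_{-m}\rangle$, not $\sum_m c_m^2\cdots$; the $c_m^2$ in both of your sums should be $c_m$. Second, the two sums do not ``cancel exactly.'' With $c_m=\binom{k-2}{\frac{k-2}{2}-m}(-1)^{\frac{k-2}{2}+m}$ (from the binomial expansion of the determinant), relation \eqref{relmummum} makes each summand $c_m\langle\mu_m,\mu_{-m}\rangle'$ equal to $1$, so $\langle\Upsilon\rangle'=k-1$; whereas in the numerator the $\binom{k-2}{\frac{k-2}{2}-m}$ survives and, summing over $m$, produces the $2^{k-2}$ that gives the $2^{k-3}$ in the answer. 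The denominator collapses and the numerator grows; they do not cancel term by term. Third, with $a=\frac{k-2}{2}+m$ and $b=\frac{k-2}{2}-m$, one has $(-i)^ai^b+i^a(-i)^b=i^{a+b}\bigl((-1)^a+(-1)^b\bigr)=2(-1)^a i^{k-2}=2(-1)^{m}$ since $k$ is even, so the bracket is $2(-1)^m$, not $2(-1)^{m+\frac{k-2}{2}}$; you should retrace the signs when you carry out the full sum.

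The proposed ``cleaner route'' via Schur's lemma is a genuine gap and would give the wrong answer. The map $\delta s_+$ is built from the $\SO(2)$-equivariant section $s_+$ and a single Lie--algebra element $\delta$ of the split diagonal torus; since $\delta$ does not commute with $\SO(2)$ (let alone $\GL_2(\R)_+$), the map $\delta s_+$ is \emph{not} a morphism of $(\cG,K)$-modules and not even $\SO(2)$-equivariant. Consequently the bilinear form $B(\mu,\nu):=\langle\delta s_+\mu,\delta s_+\nu\rangle$ is not $\GL_2(\R)$-invariant, and Schur's lemma does not apply. It is in fact not proportional to $\langle\cdot,\cdot\rangle_V$: for $k=4$ one computes $\delta s_+\mu_1=-\tfrac{3}{2}(f_2+f_{-2})$, $\delta s_+\mu_{-1}=\tfrac{3}{2}(f_2+f_{-2})$, $\delta s_+\mu_0=\tfrac{3}{2}(-if_2+if_{-2})$, giving $B(\mu_1,\mu_{-1})=-\tfrac{9}{2}$ and $B(\mu_0,\mu_0)=\tfrac{9}{2}$, while $\langle\mu_1,\mu_{-1}\rangle'=1$ and $\langle\mu_0,\mu_0\rangle'=-\tfrac12$; the ratios $-\tfrac{9}{2}$ and $-9$ disagree. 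So detecting ``the scalar'' on a single pair such as $(\mu_{\frac{k-2}{2}},\mu_{-\frac{k-2}{2}})$ is not legitimate --- you genuinely need the full sum over the eigenbasis, which is what the first half of your plan (and the paper) does. The $k=2$ sanity check is correct but cannot catch this, since $V(0)$ is one-dimensional and every form on it is trivially proportional to any other.
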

\begin{proof}
On the one side, we have by remark \ref{remarkmumxy} and proposition \ref{keylemma1}
\begin{eqnarray*}
    \langle\delta s_+(\Upsilon)\rangle&=&\sum_{j=0}^{k-2}\binom{k_\sigma-2}{j}(-1)^{k-2-j}\left\langle\delta s_+\left(\mu_{\frac{k-2}{2}-j}\right),\delta s_+\left(\mu_{\frac{k-2}{2}+j}\right)\right\rangle\\
    &=&\frac{(k-1)^2}{4}\sum_{j=0}^{k_\sigma-2}\binom{k-2}{j}(-1)^{k-2-j}\left\langle i^{2-k+j}f_{\frac{k}{2}}+i^{k_\sigma-2-j}f_{-\frac{k}{2}},i^{2-k-j}f_{\frac{k}{2}}+i^{k-2+j}f_{-\frac{k}{2}}\right\rangle_\cB\\
    &=&\frac{(k-1)^2}{2}\sum_{j=0}^{k-2}\binom{k-2}{j}=(k-1)^2\cdot2^{k-3},
\end{eqnarray*}
since $\langle f_{\frac{k}{2}},f_{-\frac{k}{2}}\rangle_\cB=\langle f_{-\frac{k}{2}},f_{\frac{k}{2}}\rangle_\cB=1$ and $\langle f_{-\frac{k}{2}},f_{-\frac{k}{2}}\rangle_\cB=\langle f_{\frac{k}{2}},f_{\frac{k}{2}}\rangle_\cB=0$. 

On the other side, by remark \ref{remarkmumxy},
\[
\langle\Upsilon\rangle'=\sum_{j=0}^{k-2}\binom{k-2}{j}(-1)^j\langle \mu_{\frac{k-2}{2}-j},\mu_{j-\frac{k-2}{2}}\rangle`=\sum_{j=0}^{k-2}1=(k-1)
\]
Hence, the result follows.
\end{proof}
\begin{remark}
    As observed in the proof of lemma \ref{lemoonpairupsilon1} above, we have that $\langle\delta s_+(\Upsilon)\rangle=\langle\delta s_-(\Upsilon)\rangle$. 
\end{remark}
Another interesting element to consider is $\delta s_+(\mu_{0})$. It is clear by the description of $\delta s_+$ given in proposition \ref{keylemma1} that $\big(\begin{smallmatrix}
    1&\\&-1\end{smallmatrix}\big)\delta s_+(\mu_{0})=\delta s_+(\mu_{0})$ (see equation \eqref{eqauxwdeltas}). In fact, $\delta s_+(\mu_{0})$ generates the subspace of $D(k)$ invariant under $\big(\begin{smallmatrix}
    1&\\&-1\end{smallmatrix}\big)\in O(2)$ with minimal weight. 
\begin{lemma}\label{pairv021}
We have that
\[
  \langle\delta s_+(\mu_{0}),\delta s_+(\mu_{ 0})\rangle=\frac{(k-1)^2}{2}.
\]
\end{lemma}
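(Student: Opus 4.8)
The plan is to proceed exactly as in the proof of Lemma \ref{lemoonpairupsilon1}: substitute the explicit formula for $\delta s_+$ from Proposition \ref{keylemma1}, and expand the bilinear pairing $\langle\cdot,\cdot\rangle$ on $D(k)$ in terms of the $\cB$-pairing of the generators $f_{\pm k/2}$.

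First I would specialize Proposition \ref{keylemma1} to $m=0$, which gives
\[
\delta s_+(\mu_0)=\frac{k-1}{2}\left((-i)^{\frac{k-2}{2}}f_{\frac{k}{2}}+i^{\frac{k-2}{2}}f_{-\frac{k}{2}}\right).
\]
Then I would plug this into $\langle\delta s_+(\mu_0),\delta s_+(\mu_0)\rangle$ and use bilinearity of $\langle\cdot,\cdot\rangle$ to expand it as a sum of four terms, each of the shape (power of $\pm i$)$\cdot\langle f_{\pm k/2},f_{\pm k/2}\rangle_\cB$, with an overall factor $\frac{(k-1)^2}{4}$ pulled out.

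Next I would invoke the normalizations recorded in \S\ref{calcI(chi)} just after the definition of $\langle\cdot,\cdot\rangle$ (the same ones already used in Lemma \ref{lemoonpairupsilon1}): with $s'_\pm$ normalized so that $s'_\pm(f_{\pm k/2})=f_{\pm k/2}$ one has $\langle f_{\frac{k}{2}},f_{-\frac{k}{2}}\rangle_\cB=\langle f_{-\frac{k}{2}},f_{\frac{k}{2}}\rangle_\cB=1$ and $\langle f_{\frac{k}{2}},f_{\frac{k}{2}}\rangle_\cB=\langle f_{-\frac{k}{2}},f_{-\frac{k}{2}}\rangle_\cB=0$, the last vanishings coming from the fact that the $\cB(\chi_k)\times\cB(\chi_{2-k})$ pairing sends $e^{2in\theta}$ against $e^{-2in\theta}$. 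The two ``diagonal'' terms then drop out, while each of the two ``cross'' terms contributes $(-i)^{\frac{k-2}{2}}i^{\frac{k-2}{2}}=1$, so the sum of the four terms is $2$ and we get $\frac{(k-1)^2}{4}\cdot 2=\frac{(k-1)^2}{2}$, as claimed.

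There is no real obstacle here; the only thing to keep in mind is that $\langle\cdot,\cdot\rangle$ is bilinear, not Hermitian, so no complex conjugation enters the expansion, and that one must use precisely the normalization $C=1$ of $s'_\pm$ fixed in \S\ref{calcI(chi)} (the same one underlying Lemma \ref{lemoonpairupsilon1}), so that the identities $\langle f_{k/2},f_{-k/2}\rangle_\cB=\langle f_{-k/2},f_{k/2}\rangle_\cB=1$ are available.
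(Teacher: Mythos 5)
Your proof is correct and is essentially the paper's own argument: specialize Proposition \ref{keylemma1} to $m=0$, expand the bilinear pairing $\langle\cdot,\cdot\rangle$ via $\langle f,h\rangle=\langle\iota_+f,s'_+h\rangle_\cB$, and use the normalization $s'_+(f_{\pm k/2})=f_{\pm k/2}$ together with the orthogonality $\langle f_n,f_m\rangle_\cB=\delta_{n,-m}$ to reduce to the two cross terms, each contributing $1$.
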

\begin{proof}
    By proposition \ref{keylemma1},
    \[
    \langle\delta s_+(\mu_0),\delta s_+(\mu_0)\rangle=\frac{(k-1)^2}{4}\left\langle(-i)^{\frac{k-2}{2}}f_{\frac{k}{2}}+i^{\frac{k-2}{2}}f_{-\frac{k}{2}},(-i)^{\frac{k-2}{2}}f_{\frac{k}{2}}+i^{\frac{k-2}{2}}f_{-\frac{k}{2}}\right\rangle_\cB=\frac{(k-1)^2}{2},
    \]
    and the result follows.
\end{proof}



\subsection{The $(\cG,K)$-modules of even weight $\underline{k}$: Case $\PGL_2(\C)$}\label{GKC}

Let us consider $\PGL_2(\C)$ as a real Lie group. Its maximal compact subgroup $K$ is the image of 
\[
\SU(2):=\left\{\left(\begin{array}{cc}a&b\\-\bar b&\bar a\end{array}\right):\;|a|^2+|b|^2=1\right\}\subset \SL_2(\C).
\]
Notice that $\SU(2)\simeq S^3:=\{(a,b\in\C^2);|a|^2+|b|^2=1\}$. For any $(a,b)\in S^3$ we will write
\[
\kappa(a,b):=\left(\begin{array}{cc}a&b\\-\bar b&\bar a\end{array}\right).
\]

Similarly as in \eqref{eqstar},
any $g\in\GL_2(\C)$ admits a decomposition
\begin{equation}\label{eqstar2}
g=u\left(\begin{array}{cc}r&x\\&r^{-1}\end{array}\right)\kappa(a,b),\qquad u\in\C^\times\;,x\in\C\;,r\in\R^\times\;,(a,b)\in S^3.    
\end{equation}

\subsubsection{Haar measure of $\SU(2)$}


Notice that $S^3$ is the boundary of $D\subset\C^2$, where
\[
D=\{(z_1,z_2)\in\C^2, \;|z_1|^2+|z_2|^2\leq 1\}.
\]
Moreover, if we consider $\C^2$ with the standard right action of $\GL_2(\C)$, then both subspaces $D$ and $S^3$ are clearly $\SU(2)$-invariant. The measure on $D$ given by $dz_1dz_2d\bar z_1d\bar z_2$ is $\SU(2)$-invariant. Indeed, the action of $\kappa(a,b)$ provides a change of variables given by the same matrix $\kappa(a,b)$, thus the action on the differential $dz_1dz_2$ (and $d\bar z_1d\bar z_2$) is given by the determinant of $\kappa(a,b)$, which is $1$. If we perform the change of variables
\[
z_1=R\cos\theta e^{i\alpha},\quad z_2=R\sin\theta e^{i\beta},\quad \bar z_1=R\cos\theta e^{-i\alpha},\quad \bar z_2=R\sin\theta e^{-i\beta}.
\]
where $\alpha,\beta\in [0,2\pi]$, $R\in [0,1]$ and $\theta\in [0,\pi/2]$, the corresponding Jacobian determinant is
\[
\left|\begin{array}{cccc}\cos\theta e^{i\alpha}&-R\sin\theta e^{i\alpha}&iR\cos\theta e^{i\alpha}&\\
\sin\theta e^{i\beta}&R\cos\theta e^{i\beta}&&iR\sin\theta e^{i\beta}\\
\cos\theta e^{-i\alpha}&-R\sin\theta e^{-i\alpha}&-iR\cos\theta e^{-i\alpha}&\\
\sin\theta e^{-i\beta}&R\cos\theta e^{-i\beta}&&-iR\sin\theta e^{-i\beta}\end{array}\right|=-R^3\sin2\theta.
\] 
Notice that any $f\in C^\infty(S^3,\C)$ can be seen as a function of $D$ that is constant in $R$, therefore
\begin{eqnarray*}
\int_D f(z_1,z_2)dz_1dz_2d\bar z_1d\bar z_2&=&-\int_0^1\int_{S^1}\int_{S^1}\int_0^{\pi/2}\sin2\theta \cdot f(\cos\theta e^{i\alpha},\sin\theta e^{i\beta}) d\theta d\alpha d\beta R^3dR\\
&=&\frac{-1}{4}\int_{S^1}\int_{S^1}\int_0^{\pi/2}\sin2\theta \cdot f(\cos\theta e^{i\alpha},\sin\theta e^{i\beta}) d\theta d\alpha d\beta.
\end{eqnarray*}
From the $\SU(2)$-invariance of $dz_1dz_2d\bar z_1d\bar z_2$, we deduce that 
\[
\int_{S^3}f(a,b)d(a,b):=\int_{S^1}\int_{S^1}\int_0^{\pi/2}\sin2\theta \cdot f(\cos\theta e^{i\alpha},\sin\theta e^{i\beta}) d\theta d\alpha d\beta,
\]
is a Haar measure for $\SU(2)$.

\begin{lemma}\label{auxlem}
We have that
\[
I_{n_1,n_2,m_1,m_2}:=\int_{S^3}a^{n_1}b^{m_1}\bar a^{n_2}\bar b^{m_2}d(a,b)=\left\{\begin{array}{lc}\frac{1}{n_1+m_1+1}\binom{n_1+m_1}{n_1}^{-1},&n_1=n_2,\;m_1=m_2,\\
0,&\mbox{otherwise.}\end{array}\right.
\]
\end{lemma}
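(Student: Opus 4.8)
The plan is a direct computation using the explicit parametrization of the Haar measure just established. First I would substitute $a=\cos\theta\,e^{i\alpha}$ and $b=\sin\theta\,e^{i\beta}$ into the integrand, so that
\[
a^{n_1}b^{m_1}\bar a^{n_2}\bar b^{m_2}=\cos^{n_1+n_2}\theta\,\sin^{m_1+m_2}\theta\;e^{i(n_1-n_2)\alpha}\,e^{i(m_1-m_2)\beta},
\]
and the integral over $S^3$ factors as a product of an integral over $\theta\in[0,\pi/2]$ (weighted by $\sin 2\theta$), one over $\alpha$ and one over $\beta$.

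Next I would carry out the $\alpha$- and $\beta$-integrals. Since the relevant measures on $S^1$ are translation invariant, orthogonality of characters forces $\int_{S^1}e^{i(n_1-n_2)\alpha}\,d\alpha$ to vanish unless $n_1=n_2$, and likewise $\int_{S^1}e^{i(m_1-m_2)\beta}\,d\beta$ to vanish unless $m_1=m_2$. This yields at once the vanishing statement when $(n_1,m_1)\neq(n_2,m_2)$, and, with the circle measures normalized to total mass $1$, leaves the factor $1$ in the remaining case.

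It then remains to evaluate, writing $n=n_1=n_2$ and $m=m_1=m_2$,
\[
\int_0^{\pi/2}\cos^{2n}\theta\,\sin^{2m}\theta\,\sin 2\theta\,d\theta.
\]
Using $\sin 2\theta\,d\theta=d(\sin^2\theta)$ and the substitution $u=\sin^2\theta$, this becomes the Beta integral
\[
\int_0^1 u^{m}(1-u)^{n}\,du=B(m+1,n+1)=\frac{m!\,n!}{(n+m+1)!},
\]
and $\dfrac{m!\,n!}{(n+m+1)!}=\dfrac{1}{n+m+1}\binom{n+m}{n}^{-1}$, which is the asserted value.

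There is no genuine obstacle here; the computation is routine. The only points deserving care are the bookkeeping of the exponents of $\cos\theta$ and $\sin\theta$ once the factor $\sin 2\theta=2\sin\theta\cos\theta$ is absorbed, and fixing the normalization of the circle measures once and for all so that the constant comes out exactly as stated (consistently with the convention ${\rm vol}(S^1)=1$ used in \S\ref{GKR}).
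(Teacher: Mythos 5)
Your proof is correct and follows essentially the same route as the paper's: substitute the parametrization $a=\cos\theta\,e^{i\alpha}$, $b=\sin\theta\,e^{i\beta}$, use orthogonality of characters on the two circle factors to obtain the vanishing when $n_1\neq n_2$ or $m_1\neq m_2$, then evaluate the remaining $\theta$-integral. The only (cosmetic) difference is in the last step: the paper evaluates $2\int_0^{\pi/2}\sin^{1+2m}\theta\cos^{1+2n}\theta\,d\theta$ "by parts using a simple induction," while you recognize it directly as the Beta integral via $u=\sin^2\theta$; both give $\frac{m!\,n!}{(n+m+1)!}=\frac{1}{n+m+1}\binom{n+m}{n}^{-1}$.
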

\begin{proof}
By the above calculations
\[
I_{n_1,n_2,m_1,m_2}=2\int_{S^1}\int_{S^1}\int_0^{\pi/2}\sin\theta^{1+m_1+m_2} \cos\theta^{1+n_1+n_2} e^{(n_1-n_2)i\alpha}e^{(m_1-m_2)i\beta} d\theta d\alpha d\beta,
\]
hence clearly $I_{n_1,n_2,m_1,m_2}=0$ unless $m_1=m_2$ and $n_1=n_2$. If this is the case, the integral can be solved by parts using a simple induction:
\begin{eqnarray*}
I_{n_1,n_1,m_1,m_1}=2\int_0^{\pi/2}\sin\theta^{1+2m_1} \cos\theta^{1+2n_1} d\theta=\frac{1}{n_1+m_1+1}\binom{n_1+m_1}{n_1}^{-1},
\end{eqnarray*}
hence the result follows.
\end{proof}

\subsubsection{Quotients of Principal series}

Let us consider the finite dimensional $\C$-representation of $\SU(2)$:
\[
\cP(n):={\rm Sym}^{n}(\C^2)=\{P\in\C[x,y],\; P(ax,ay)=a^nP(x,y)\},\quad V(n)=\cP(n)^\vee, 
\]
with action
\[
(\kappa(a,b)P)(x,y):=P((x,y)\kappa(a,b))=P(ax-\bar by,bx+\bar ay),\qquad (a,b)\in S^3. 
\]
Notice that by \eqref{dualVP} we have $\cP(n)\simeq V(n)$. By \cite{Hall}, these are all the irreducible representations of $\SU(2)$. Thus, the irreducible representations of the compact subgroup of $\PGL_2(\C)$ are $V(2n)$, where $n\in\N$. 

For any character $\chi:\C^\times\rightarrow\C^\times$,
let us consider the induced $\PGL_2(\C)$-representation
\[
\cB(\chi):=\left\{f:\GL_2(\C)\rightarrow\C:\;f\left(\left(\begin{array}{cc}t_1&x\\&t_2\end{array}\right)g\right)=\chi(t_1/t_2)\cdot f(g)\right\}.
\]
By equation \eqref{eqstar2} we have a correspondence
\begin{equation}\label{corrBfunct}
\cB(\chi)\stackrel{\simeq}{\longrightarrow}\{f:S^3\rightarrow\C:\;f(e^{i\theta}a,e^{i\theta}b)=\chi(e^{2i\theta})\cdot f(a,b)\}.
\end{equation}

Let us consider the embedding $S^1\hookrightarrow S^3$, $e^{i\theta}\mapsto (e^{i\theta},0)$.
By \eqref{corrBfunct}, the $\SU(2)$-representation $\cB(\chi)$ is induced by the character $\chi^2$ of $S^1$. By Frobenius reciprocity,
\[
\Hom_{\SU(2)}(V(2n),\cB(\chi))\stackrel{\simeq}{\longrightarrow} \cP(2n)(\chi):=\{P\in \cP(2n),\;\kappa(e^{i\theta},0)P=\chi(e^{-2i\theta})\cdot P\}
\]
If we write $\chi(e^{i\theta})=e^{i\lambda\theta}$, we can easily compute the subspace $\cP(2n)(\chi)$:
\[
\cP(2n)(\chi)=\{P\in \cP(2n),\;P(e^{i\theta}x,e^{-i\theta}y)=e^{-2\lambda i\theta}\cdot P(x,y)\}=\C x^{n-\lambda}y^{n+\lambda}.
\]
This implies that 
\begin{equation}\label{homsV}
\Hom_{\SU(2)}(V(2n),\cB(\chi))=\left\{\begin{array}{cc}\C,&|\lambda|\leq n\\
0,&\mbox{otherwise.}\end{array}\right.
\end{equation}
\begin{definition}\label{defvarphi}
Assume that $\chi(e^{i\theta})=e^{i\lambda\theta}$. For all $n\geq|\lambda|$
we will fix $\varphi_n\in\Hom_{\SU(2)}(V(2n),\cB(\chi))$ to be 
\begin{equation}\label{morfvarphn}
\varphi_n(\mu)(\alpha,\beta):=\mu\left(\left|\begin{array}{cc}\alpha&\beta\\x&y\end{array}\right|^{n+\lambda}\left|\begin{array}{cc}-\bar\beta&\bar\alpha\\x&y\end{array}\right|^{n-\lambda}\right).
\end{equation}
\end{definition}

Let us consider the Lie algebra $\cG_\C={\rm Lie}(\PGL_2(\C))\simeq\{g\in\M_2(\C),\;{\rm Tr}g=0\}$. 
\begin{definition}
Write $\cB(\chi,n)$ for the image of $V(2n)$ through $\varphi_n$. Hence the subspace
\[
\tilde \cB(\chi):=\bigoplus_{n\geq |\lambda|}\cB(\chi,n)\subseteq \cB(\chi)
\] 
is a natural $(\cG_\C,K)$-module.
\end{definition}

\begin{proposition}
Let $\Sigma$ be the set of $\R$-isomorphisms $\sigma:\C\simeq\C$, and
let $\underline{k}=(k_\sigma)_{\sigma\in\Sigma}\in (2\N)^2$ with $k_\sigma\geq 2$.
Assume that $\chi_{\underline{k}}(t):=\prod_{\sigma\in\Sigma}\sigma(t)^{\frac{k_\sigma}{2}}$. We have a morphism of $\GL_2(\C)$-representations
\begin{eqnarray*}
\rho&:&\cB(\chi_{\underline{k}})\longrightarrow V(\underline{k}-2):=\bigotimes_{\sigma\in\Sigma}V(k_\sigma-2); \\
 \rho(f)\left(\bigotimes_{\sigma\in\Sigma}P_\sigma\right)&=& \int_{S^3}f(\alpha,\beta)\left(\prod_{\sigma\in\Sigma} P_\sigma(-\sigma(\bar\beta),\sigma(\bar\alpha))\right)d(\alpha,\beta),
\end{eqnarray*}
where $\GL_2(\C)$ acts on each $V(k_\sigma-2)$ by means of $\sigma$.
\end{proposition}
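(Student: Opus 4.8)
The plan is to mirror the proof of the $\PGL_2(\R)$-analogue given in \S\ref{GKR}, replacing \cite[Lemma 2.6.1]{Bump} by its formal counterpart for $\GL_2(\C)$. First I would check that $\rho(f)$ is a well-defined element of $V(\underline{k}-2)=\bigl(\bigotimes_{\sigma\in\Sigma}\cP(k_\sigma-2)\bigr)^{\vee}$: for fixed continuous $f$ the integrand is $\C$-multilinear in the family $(P_\sigma)_{\sigma\in\Sigma}$, homogeneous of degree $k_\sigma-2$ in each $P_\sigma$, and the integral over the compact space $S^3$ converges; hence the assignment $(P_\sigma)_\sigma\mapsto\int_{S^3}f(\alpha,\beta)\prod_\sigma P_\sigma(-\sigma(\bar\beta),\sigma(\bar\alpha))\,d(\alpha,\beta)$ factors through $\bigotimes_\sigma\cP(k_\sigma-2)$ and defines the functional $\rho(f)$.

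For the equivariance I would introduce, for $P=\bigotimes_\sigma P_\sigma\in\cP(\underline{k}-2)$, the auxiliary function
\[
h_P\colon\GL_2(\C)\longrightarrow\C,\qquad h_P(g)=f(g)\cdot\prod_{\sigma\in\Sigma}\sigma(\det g)^{\frac{2-k_\sigma}{2}}\cdot\prod_{\sigma\in\Sigma}P_\sigma\bigl(\sigma(c),\sigma(d)\bigr),\quad g=\big(\begin{smallmatrix}a&b\\c&d\end{smallmatrix}\big),
\]
the extra two factors being exactly the twist reconciling the un-normalized induction defining $\cB(\chi_{\underline{k}})$ with the normalization of the $\GL_2(\C)$-action on $\cP(k_\sigma-2)={\rm Sym}^{k_\sigma-2}$ fixed in \S\ref{findimreps}. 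Using the $\chi_{\underline{k}}$-equivariance of $f$, the fact that the bottom row of $\big(\begin{smallmatrix}t_1&x\\&t_2\end{smallmatrix}\big)g$ is $(t_2c,t_2d)$ together with $\deg P_\sigma=k_\sigma-2$, and multiplicativity of $\det$, a short computation gives
\[
h_P\Bigl(\big(\begin{smallmatrix}t_1&x\\&t_2\end{smallmatrix}\big)g\Bigr)=\Bigl(\prod_{\sigma\in\Sigma}\sigma(t_1/t_2)\Bigr)\cdot h_P(g),
\]
so $h_P$ lies in the degenerate principal series of $\GL_2(\C)$ induced from the modular quasicharacter $\delta_B\bigl({\rm diag}(t_1,t_2)\bigr)=\prod_{\sigma}\sigma(t_1/t_2)$ of the Borel $B$. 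Since $\det\kappa(\alpha,\beta)=1$ and $\kappa(\alpha,\beta)$ has bottom row $(-\bar\beta,\bar\alpha)$, this rewrites $\rho(f)(P)=\int_{S^3}h_P(\kappa(\alpha,\beta))\,d(\alpha,\beta)$ for the Haar measure $d(\alpha,\beta)$ of $S^3\simeq\SU(2)$ constructed above.

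It then remains to see that $f\mapsto\rho(f)$ intertwines the right-regular action of $\GL_2(\C)$ on $\cB(\chi_{\underline{k}})$ with the action on $V(\underline{k}-2)$ obtained through the embeddings $\sigma\in\Sigma$. This is the formal content of \cite[Lemma 2.6.1]{Bump}, whose proof applies verbatim with $\GL_2(\C)$ in place of $\GL_2(\R)$; alternatively one can argue directly with the Iwasawa decomposition \eqref{eqstar2}: writing $\kappa(\alpha,\beta)g_0=n\,t\,\kappa(\alpha',\beta')$, the left-$(B,\delta_B)$-equivariance of $h_P$ absorbs the factor $n\,t$, while the substitution $(\alpha,\beta)\mapsto(\alpha',\beta')$, being right translation on $S^3$ by the compact part of $g_0$, preserves $d(\alpha,\beta)$ because the latter is the bi-invariant Haar measure of $\SU(2)$; comparing the result with the defining formula of the $\GL_2(\C)$-action on $\cP(k_\sigma-2)$ gives $\rho(R(g_0)f)=g_0\cdot\rho(f)$.

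I expect the only genuine obstacle beyond the real case to be the bookkeeping of the two embeddings $\sigma\in\Sigma$ together with the various powers of $\det$: one has to make sure the twist $\prod_\sigma\sigma(\det g)^{(2-k_\sigma)/2}$ makes the eigencharacter of $h_P$ under $B$ come out to be exactly $\delta_B$, so that the Iwasawa change of variables closes up and the final comparison with the ${\rm Sym}^{k_\sigma-2}$-action is an equality, not merely an equality up to a scalar. Given the Haar measure computation preceding Lemma \ref{auxlem}, this is routine.
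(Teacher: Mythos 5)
Your proposal is correct and takes essentially the same route as the paper: the paper likewise forms the auxiliary function $h(g)=f(g)\prod_\sigma P_\sigma(\sigma(c),\sigma(d))\sigma(\det g)^{(2-k_\sigma)/2}$, observes it lies in the space induced from the modular quasicharacter (written there as $\delta\big(\begin{smallmatrix}r&x\\&r^{-1}\end{smallmatrix}\big)=r^4$, which coincides with your $\prod_\sigma\sigma(t_1/t_2)$), and concludes by invoking \cite[Lemma 2.6.1]{Bump}. One small caveat in the ``alternative'' Iwasawa argument you sketch: the map $\kappa\mapsto\kappa'$ determined by $\kappa g_0=nt\kappa'$ is \emph{not} right translation on $\SU(2)$ by a fixed element unless $g_0\in K$ (since $\kappa$ and the Borel part of $g_0$ do not commute); what actually closes the loop is that the Jacobian of this substitution is $\delta_B(nt)^{-1}$, exactly cancelling the factor produced by the $\delta_B$-equivariance of $h_P$ --- this cancellation is precisely the content of Bump's lemma, so your primary route is fine.
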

\begin{proof}
Let us consider $\delta$ the modular quasicharacter
\[
\delta: P:=\left\{\left(\begin{array}{cc}r&x\\&r^{-1}\end{array}\right), r\in \R_{>0},\;x\in\C\right\}\longrightarrow\R,\qquad \delta\left(\begin{array}{cc}r&x\\&r^{-1}\end{array}\right)=r^4.
\]
Thus, the function $h:\GL_2(\C)\rightarrow\C$
 \[
 h(g)=f(g)\left(\prod_{\sigma\in\Sigma} P_\sigma(\sigma(c),\sigma(d))\sigma(\det(g))^{\frac{2-k_\sigma}{2}}\right),\qquad g=\left(\begin{array}{cc}a&b\\c&d\end{array}\right),
 \]
lies in the induced representation by $\delta$. 
The result follows from \cite[lemma 2.6.1]{Bump}.
\end{proof}

Write $D(\underline{k}):=\tilde \cB(\chi_{\underline{k}})\cap \ker(\rho)$. It is the unique sub-$(\cG_\C,K)$-module of $\tilde\cB(\chi_{\underline{k}})$ and, by definition, we have the exact sequence of $(\cG_\C,K)$-modules:
\begin{equation}\label{exseq2}
0\longrightarrow D({\underline{k}})\stackrel{\iota}{\longrightarrow} \tilde \cB(\chi_{\underline{k}})\stackrel{\rho}{\longrightarrow} V({\underline{k}}-2)\longrightarrow 0.
\end{equation}
Notice that, in this case, $\lambda=\frac{k_{\rm id}-k_{c}}{2}$, where $c$ is the complex conjugation. If we write
\[
x^{\underline m}y^{\underline k-2-\underline m}:=\bigotimes_{\sigma\in\Sigma}x_\sigma^{m_\sigma}y_\sigma^{k_\sigma-2-m_\sigma}\in\cP(\underline{k}-2):=\bigotimes_{\sigma\in\Sigma}\cP(k_\sigma-2),\qquad \underline{m}=(m_\sigma) \leq \underline{k}-2,
\]
and we consider $(x^{2n})^\vee\in V(2n)$, the element of the dual basis of $\{x^my^{2n-m}\}_m$ corresponding to $x^{2n}$, one obtains 
\begin{eqnarray*}
&&\rho(\varphi_n((x^{2n})^\vee))(x^{\underline m}y^{\underline k-2-\underline m})=\int_{S^3}\bar a^{n-\frac{k_{\rm id}-k_{c}}{2}}b^{n+\frac{k_{\rm id}-k_{c}}{2}}(-\bar b)^{m_{\rm id}}\bar a^{k_{\rm id}-2-m_{\rm id}}(-b)^{m_c}a^{k_c-2-m_c}d(a,b).
\end{eqnarray*}
Thus, by lemma \ref{auxlem}, 
\begin{equation}\label{norm2}
\rho(\varphi_n((x^{2n})^\vee))(x^{\underline m}y^{\underline k-2-\underline m})=\left\{\begin{array}{ll}\frac{(- 1)^{m_{\rm id}+m_c}}{n+\frac{k_{\rm id}-1+k_c-1}{2}}\binom{n+\frac{k_{\rm id}-2+k_c-2}{2}}{m_{\rm id}}^{-1},&n=m_{\rm id}-m_c-\frac{k_{\rm id}-k_{c}}{2},\\0,&n\neq m_{\rm id}-m_c-\frac{k_{\rm id}-k_{c}}{2},\end{array}\right.
\end{equation}
Since $\kappa(a,b)(x^{2n})^\vee$ generates $V(2n)$ and $0\leq m_\sigma\leq k_\sigma-2$, we deduce that 
\[
0\longrightarrow D(\underline{k})=\bigoplus_{n>\frac{k_{\rm id}-2+k_c-2}{2}}\cB(\chi_{\underline{k}},n)\longrightarrow \tilde \cB(\chi_{\underline{k}})=\bigoplus_{n\geq \left|\frac{k_{\rm id}-k_c}{2}\right|}\cB(\chi_{\underline{k}},n)\longrightarrow V(\underline{k}-2)\longrightarrow 0.
\]

\subsubsection{Splittings of the $(\cG,K)$-module exact sequences}\label{splitGK2}

As in \S \ref{splitGK}, we can construct $\SU(2)$-equivariant sections of the exact sequence \eqref{exseq2} and its dual counterpart
we  have 
\[
0\longrightarrow V(\underline{k}-2)\stackrel{\rho^\vee}{\longrightarrow} \tilde \cB(\chi_{2-\underline{k}})\stackrel{\imath^\vee}{\longrightarrow} D(\underline{k}) \longrightarrow 0,
\]
where $\chi_{2-\underline{k}}(t)=\prod_{\sigma\in\Sigma}\sigma(t)^{\frac{2-k_\sigma}{2}}$ and
\[
\rho^\vee(\mu)\left(\begin{array}{cc}a&b\\c&d\end{array}\right)=\mu\left(\left|\begin{array}{cc}X&Y\\c&d\end{array}\right|^{\underline{k}-2}\right)\cdot\chi_{2-\underline k}(ad-bc),\qquad  \left|\begin{array}{cc}X&Y\\c&d\end{array}\right|^{\underline{k}-2}:=\bigotimes_{\sigma\in\Sigma}\left|\begin{array}{cc}X_\sigma&Y_\sigma\\\sigma(c)&\sigma(d)\end{array}\right|^{k_\sigma-2}\in\cP(\underline{k}-2).
\]

\begin{definition}
Write $M:=\frac{\sum_\sigma (k_\sigma-2)}{2}$ and $\lambda:=\frac{k_{\rm id}-k_c}{2}$.
Notice that as a $\SU(2)$-representation $V(\underline{k}-2)\simeq \bigoplus_{|\lambda|\leq n\leq M}V(2n)$. Indeed, if $|\lambda|\leq n\leq M$ then the integer numbers $r_1:=n+\lambda$, $r_2:=n-\lambda$ and $r_3:=M-n$ are positive. In this situation we say that the even integers $k_{\rm id}-2$, $k_c-2$ and $2n$ are \emph{balanced}. Hence we can consider the $\SU(2)$-equivariant morphism
\begin{eqnarray*}
t_n:V(\underline{k}-2)=V(k_{\rm id}-2)\otimes V(k_c-2)\longrightarrow \cP(2n)\simeq V(2n),\quad t_n(\mu_{\rm id}\otimes\mu_c)=\mu_{\rm id}\mu_c(\Delta_n),\\
\Delta_n(X_{\rm id},Y_{\rm id},X_{c},Y_{c},x,y):=\left|\begin{array}{cc}X_{\rm id}&Y_{\rm id}\\x&y\end{array}\right|^{r_1}\left|\begin{array}{cc}-Y_{c}&X_{c}\\x&y\end{array}\right|^{r_2}\left|\begin{array}{cc}X_{\rm id}&Y_{\rm id}\\-Y_c&X_c\end{array}\right|^{r_3}.\qquad\qquad
\end{eqnarray*}
The polynomial $\Delta$ is called \emph{Clebsch-Gordan element} (see \cite[\S 3]{GS} for other situations where such an element naturally appears). 
The morphism $t_n$ is unique up to constant. 
\end{definition}
\begin{remark}\label{normalizationD2}
    Similarly as in remark \ref{normalizationD}, we still have to specify the morphism $\imath^\vee$. By Schur's lemma the morphism $\imath^\vee$ is univocally determined if we impose that $\iota^\vee(\varphi_{M+1}(\mu))=\varphi_{M+1}(\mu)$, which is what we will assume from now on.
\end{remark}

\begin{lemma}\label{lemsect2}
There exist unique $\SU(2)$-equivariant sections of $\rho$ and $\imath^\vee$, namely, $\SU(2)$-equivariant morphisms  
\[
s:V(\underline{k}-2)\longrightarrow \tilde\cB(\chi_{\underline{k}}),\qquad s':D(\underline{k})\longrightarrow \tilde\cB(\chi_{2-\underline{k}})
\]
such that $\rho\circ s={\rm id}$ and $s'\circ \imath^\vee={\rm id}$. More precisely,
\[
s(\mu)=\sum_{|\lambda|\leq n\leq M}(2n+1)\binom{2n}{n+\lambda}\cdot\varphi_n\circ t_n(\mu).
\]
\end{lemma}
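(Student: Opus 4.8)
The plan is to construct $s$ and $s'$ explicitly and verify the required properties, then invoke Schur's lemma for uniqueness. First I would observe that, since the exact sequence \eqref{exseq2} and its dual split as $\SU(2)$-modules (because $\SU(2)$ is compact, so every exact sequence of finite-type $\SU(2)$-modules splits), the existence of $\SU(2)$-equivariant sections is automatic; uniqueness follows from the fact that $V(\underline{k}-2)$ decomposes multiplicity-freely as $\bigoplus_{|\lambda|\le n\le M}V(2n)$ (this is the ``balanced'' Clebsch--Gordan decomposition recalled just above the statement), so by Schur each isotypic component of a section is determined up to a scalar, and the scalar is pinned down by the requirement $\rho\circ s=\mathrm{id}$ (resp. $s'\circ\imath^\vee=\mathrm{id}$). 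The substance of the lemma is therefore the \emph{explicit formula} for $s$.

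For the explicit formula, I would proceed component by component. On the $V(2n)$-isotypic piece of $V(\underline{k}-2)$, the map $t_n:V(\underline{k}-2)\to V(2n)$ is (up to scalar) the projection onto that piece, and $\varphi_n:V(2n)\to\cB(\chi_{\underline{k}})$ of Definition \ref{defvarphi} lands in $\cB(\chi_{\underline{k}},n)\subseteq\tilde\cB(\chi_{\underline{k}})$. Hence $\varphi_n\circ t_n$ is, up to a scalar $c_n$, the inclusion of the $n$-th piece, and I must have $s=\sum_{|\lambda|\le n\le M}c_n\,\varphi_n\circ t_n$ with the $c_n$ chosen so that $\rho\circ s=\mathrm{id}$. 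To find $c_n$ it suffices to test against one convenient vector in each $V(2n)$: I would apply $\rho\circ\varphi_n$ to the element $(x^{2n})^\vee\in V(2n)$ and compare with the value of $t_n$ there. The computation $\rho(\varphi_n((x^{2n})^\vee))$ was essentially carried out in \eqref{norm2} using Lemma \ref{auxlem} (the integral $I_{n_1,n_2,m_1,m_2}$ over $S^3$), which gives $\frac{(-1)^{m_{\mathrm{id}}+m_c}}{n+\frac{k_{\mathrm{id}}+k_c-2}{2}}\binom{n+\frac{k_{\mathrm{id}}+k_c-4}{2}}{m_{\mathrm{id}}}^{-1}$ on the relevant monomial; a parallel evaluation of $t_n((x^{2n})^\vee$-image$)$ via the Clebsch--Gordan element $\Delta_n$, expanded by the binomial theorem, yields the matching combinatorial factor. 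Taking the ratio gives $c_n=(2n+1)\binom{2n}{n+\lambda}$, which is the claimed formula. One then checks directly that the resulting $s$ is $\SU(2)$-equivariant (clear, since each $\varphi_n$ and each $t_n$ is) and that $\rho\circ s=\mathrm{id}$ (by the multiplicity-one reduction to the single test vector above). The section $s'$ is obtained by the same argument applied to the dual sequence, normalized by the convention of Remark \ref{normalizationD2} that $\imath^\vee\circ\varphi_{M+1}=\varphi_{M+1}$, so that $s'$ on the bottom piece of $D(\underline{k})$ is the identity and the higher pieces are determined analogously.

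The main obstacle I anticipate is the bookkeeping in the binomial expansion of $\Delta_n$ and the matching of the two combinatorial expressions: one must correctly track the three exponents $r_1=n+\lambda$, $r_2=n-\lambda$, $r_3=M-n$, expand each determinant factor, collect the coefficient of the chosen monomial $x^{\underline m}y^{\underline k-2-\underline m}$, and see that the $S^3$-integral normalization from \eqref{norm2} and \eqref{auxlem} cancels everything except $(2n+1)\binom{2n}{n+\lambda}$. This is a routine but somewhat delicate Vandermonde/Chu--Vandermonde-type identity; everything else (existence, equivariance, uniqueness) is formal.
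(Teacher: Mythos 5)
Your proposal is correct and follows essentially the same route as the paper: decompose multiplicity-freely into $\SU(2)$-isotypic pieces, reduce to scalars $c_n$ in $s=\sum_n c_n\,\varphi_n\circ t_n$ by Schur, and pin each $c_n$ down by testing $\rho\circ s=\mathrm{id}$ on $(x^{2n})^\vee$ via \eqref{norm2} and a Vandermonde-type evaluation of the Clebsch--Gordan element $\Delta_n$. The paper phrases the test slightly differently (choosing $\mu$ with $t_n(\mu)=(x^{2n})^\vee$ and imposing self-consistency) but the calculation is the same; your remarks on $s'$ go a touch beyond what the paper spells out, which is harmless.
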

\begin{proof}
The decomposition of $\tilde\cB(\chi)$ as a direct sum of irreducible $\SU(2)$-representations and the resultant decomposition of the subrepresentations $V(\underline{k}-2)$ and $D(\underline{k})$ imply the existence of $s$ and $s'$. Let us show that $s$ is as described:
The unicity of $\varphi_n$ and $t_n$ implies that there exist $C_n\in\C$ such that 
\[
s=\sum_{|\lambda|\leq n\leq M}C_n\cdot\varphi_n\circ t_n.
\]
Let us fix $n$ in the range $[|\lambda|, M]$. Assume that $\mu\in V(\underline{k}-2)$ is such that $t_n(\mu)=(x^{2n})^\vee$ and $t_{n'}(\mu)=0$ for $n'\neq n$. Hence, by relation \eqref{norm2}, for any $\underline{m}$ with $m_{\rm id}-m_c-\lambda=n$ we have 
\[
\mu(X^{\underline m}Y^{\underline k-2-\underline m})=\rho\circ s(\mu)(X^{\underline m}Y^{\underline k-2-\underline m})=C_n\cdot \rho\circ\varphi_n((x^{2n})^\vee)(X^{\underline m}Y^{\underline k-2-\underline m})=C_n\frac{(-1)^{m_{\rm id}+m_c}}{n+M+1}\binom{n+M}{m_{\rm id}}^{-1}
\]
and $\mu(X^{\underline m}Y^{\underline k-2-\underline m})=0$ if $m_{\rm id}-m_c-\lambda\neq n$. We compute that 
\begin{eqnarray*}
t_n(\mu)&=&\mu\left(\sum_{i,j,s}\binom{r_1}{i-j-s}\binom{r_2}{j+s}\binom{r_3}{s}X_{\rm id}^{i-j}Y_{\rm id}^{r_1+r_3-i+j}X_c^{r_2-j}Y_c^{r_3+j}y^{i}x^{2n-i}(-1)^{i+j+s}\right)\\
&=&\frac{(-1)^{r_3}C_n}{n+M+1}y^{2n}\sum_{j,s}(-1)^{s+j}\binom{r_1}{2n-j-s}\binom{r_2}{j+s}\binom{r_3}{s}\binom{n+M}{2n-j}^{-1}.
\end{eqnarray*}
Recall that $r_1=n+\lambda$, $r_2=n-\lambda$ and $r_3=M-n$. This implies that $2n-r_1=r_2\leq j+s \leq r_2$, hence we have $j+s=r_2$. Since under the identification \eqref{dualVP} $(x^{2n})^\vee$ corresponds to $y^{2n}$, we obtain
\begin{eqnarray*}
y^{2n}&=&t_n(\mu)=\frac{C_n}{n+M+1}y^{2n}\sum_{s}\binom{r_3}{s}\binom{n+M}{r_1+s}^{-1}=C_ny^{2n}\frac{r_3!r_2!r_1!}{(M+n+1)!}\sum_{s}\binom{M-D-s}{r_2}\binom{r_1+s}{r_1}\\
&=&C_ny^{2n}\frac{r_3!r_2!r_1!}{(M+n+1)!}\binom{M+n+1}{2n+1}=\frac{C_n}{2n+1}y^{2n}\binom{2n}{r_1}^{-1},
\end{eqnarray*}
and the result follows.
\end{proof}

\subsubsection{Diagonal torus and splittings
}\label{diagTor2}

Assume that we have the diagonal torus $\imath:T(\C)\hookrightarrow\PGL_2(\C)$, 
$\imath(t)=\big(\begin{smallmatrix}t&\\&1\end{smallmatrix}\big)$.
We write
\[
\cT_\C:={\rm Lie}(T(\C))=
\C \delta\subset \cG_\C,\qquad \delta:=\left(\begin{array}{cc}1&\\&\end{array}\right).
\]
Thus $\exp(z\delta)=\imath(e^z)$, for all $z\in\C$. 

Again, given the section $s:V(\underline{k}-2)\rightarrow \tilde\cB(\chi_{\underline{k}})$, we consider the well defined morphism
\[
\delta s:V(\underline{k}-2)\longrightarrow D(\underline{k})\subset \cB(\chi_{\underline{k}}),\qquad \delta s(\mu):=\delta (s(\mu))-s(\delta\mu).
\]


\begin{definition}\label{defmum}
For any $\frac{2-\underline{k}}{2}\leq\underline{m}=(m_{\rm id},m_c) \leq\frac{\underline{k}-2}{2} $,
let $\mu_{\underline{m}}\in V(\underline{k}-2)$ be such that
\[
\mu_{\underline{m}}\left(\left|\begin{array}{cc}X& Y\\x&y\end{array}\right|^{\underline{k}-2}\right)=x^{\frac{\underline k-2}{2}-\underline m}y^{\frac{\underline k-2}{2}+\underline m},\qquad \left|\begin{array}{cc}X& Y\\x&y\end{array}\right|^{\underline{k}-2}:=\prod_\sigma \left|\begin{array}{cc}X_\sigma& Y_\sigma\\x_\sigma&y_\sigma\end{array}\right|^{k_\sigma-2}.
\]
It can be checked analogously as in \S \ref{diagTor} that $\imath(t)\mu_{\underline{m}}=t^{-\underline{m}}\mu_{\underline{m}}$. 
\end{definition}
\begin{remark}\label{remarkmumxy2}
    Similarly as in remark \ref{remarkmumxy}, $\mu_{\underline m}$ corresponds to $x^{\frac{\underline k-2}{2}-\underline m}y^{\frac{\underline k-2}{2}+\underline m}$ under the isomorphism $V(\underline k-2)\simeq \cP(\underline k-2)$ induced by \eqref{dualVP}.
\end{remark}


\begin{proposition}\label{keylemma2}
For any $\underline{\mu}\in V(\underline{k}-2)$ we have that 
\[
\delta s(\underline{\mu})=-2\binom{2M}{k_{\rm id}-2}\varphi_{M+1}(t_M(\underline{\mu})^*),
\]
where $t_M(\underline{\mu})^*\in V(2M+2)$ is given by
$t_M(\underline{\mu})^*(P):=t_M(\underline{\mu})\left(\frac{\partial P}{\partial x\partial y}\right)$.
\end{proposition}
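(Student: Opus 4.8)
The plan is to mimic the real case treated in \S\ref{diagTor}, in particular Proposition \ref{keylemma1}, but working on $\SU(2)$ instead of $\SO(2)$. First I would use the explicit description of the section $s$ from Lemma \ref{lemsect2}, namely $s(\mu)=\sum_{|\lambda|\le n\le M}(2n+1)\binom{2n}{n+\lambda}\varphi_n\circ t_n(\mu)$, so that $\delta s(\mu)=\delta(s(\mu))-s(\delta\mu)$ is a concrete element of $\tilde\cB(\chi_{\underline k})$. Since $\rho\circ\delta s=0$ (because $\rho$ is a $(\cG_\C,K)$-morphism and $\rho\circ s=\mathrm{id}$), we already know $\delta s(\underline\mu)\in D(\underline k)=\bigoplus_{n>M}\cB(\chi_{\underline k},n)$; the main point is to show it lands in the \emph{minimal} piece $\cB(\chi_{\underline k},M+1)$ and to pin down the constant. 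The $\SU(2)$-weight bookkeeping via \eqref{homsV} forces $\delta$ to shift $n$ by at most $1$ (the adjoint action of $\delta\in\cG_\C$ decomposes into pieces raising and lowering the $\SU(2)$-type by one), so $\delta(s(\mu))$ can have components only in $\cB(\chi_{\underline k},n\pm1)$ for $|\lambda|\le n\le M$; combined with the vanishing of the $\rho$-image this kills everything except the $n=M+1$ component. This is the analogue of the cancellation ``almost all terms cancel out'' in the proof of Proposition \ref{keylemma1}.

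Next I would compute that surviving component explicitly. Because $\delta s$ is $\SU(2)$-equivariant and $\mathrm{Hom}_{\SU(2)}(V(\underline k-2),V(2M+2))$ is one-dimensional — spanned by the composite of $t_M:V(\underline k-2)\to V(2M)$ followed by the unique equivariant map $V(2M)\to V(2M+2)$, which under \eqref{dualVP} is the second-order differential operator $\partial^2/\partial x\partial y$ — the map $\delta s$ must equal a scalar times $\varphi_{M+1}\circ(t_M(\cdot))^{*}$. So it remains to evaluate this scalar on one convenient test vector. I would take $\underline\mu=\mu_{\underline m}$ for a suitable extreme weight $\underline m$ (say $\underline m$ with $m_{\mathrm{id}}-m_c-\lambda=M$, i.e.\ a vector detected only by $t_M$), use $\imath(t)\mu_{\underline m}=t^{-\underline m}\mu_{\underline m}$ to rewrite $\delta s(\mu_{\underline m})=\delta(s(\mu_{\underline m}))+ (m_{\mathrm{id}}+m_c)\, s(\mu_{\underline m})$, and then differentiate the explicit functions $\varphi_n\circ t_n(\mu_{\underline m})$ along $\exp(z\delta)=\imath(e^z)$ at $z=0$. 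The raising part of this derivative produces the $\varphi_{M+1}$-term, whose coefficient I would read off using the normalizations \eqref{norm2} and Lemma \ref{auxlem}, exactly as $\delta f_n$ was computed in Proposition \ref{keylemma1}. Matching against $\varphi_{M+1}(t_M(\mu_{\underline m})^{*})$ evaluated on the same vector, and using $\binom{2M}{k_{\mathrm{id}}-2}=\binom{2M}{r_1}$-type identities, should yield the constant $-2\binom{2M}{k_{\mathrm{id}}-2}$.

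The main obstacle I anticipate is the bookkeeping in the explicit derivative computation: differentiating $\varphi_n(t_n(\mu))$ along the torus and expanding the Clebsch–Gordan element $\Delta_n$ produces a multi-index sum of binomial coefficients, and isolating precisely the $n\mapsto M+1$ contribution (as opposed to the $n\mapsto M-1$ and interior-cancelling contributions) requires care — this is the genuine content, the analogue of the identity $\delta f_n=\tfrac12(\tfrac k2+n)f_{n+1}+\tfrac12(\tfrac k2-n)f_{n-1}$ in the real case. A clean way to organize it is to first prove the structural statement ($\delta s(\mu)\in\cB(\chi_{\underline k},M+1)$ and $\delta s=c\cdot\varphi_{M+1}\circ t_M^{*}$ for some constant $c$) purely from $\SU(2)$-representation theory and the vanishing of $\rho\circ\delta s$, and only then determine $c$ by a single well-chosen evaluation, so that the heavy combinatorics is confined to one scalar identity rather than carried through the whole argument.
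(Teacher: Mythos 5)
Your proposed shortcut has a genuine gap: the map $\delta s$ is \emph{not} $\SU(2)$-equivariant, and in fact $\Hom_{\SU(2)}(V(\underline{k}-2),V(2M+2))=0$, not $\C$. As an $\SU(2)$-module $V(\underline{k}-2)\simeq\bigoplus_{|\lambda|\le n\le M}V(2n)$ contains no copy of the irreducible $V(2M+2)$, so Schur's lemma gives a \emph{zero} Hom-space; if $\delta s$ were $\SU(2)$-equivariant it would therefore have to vanish, contradicting the statement you are trying to prove. The source of the error is that $\delta\in\cT_\C$ does not commute with $\SU(2)$: for $k\in\SU(2)$ one has $\delta s(k\mu)=k\cdot(\delta's)(\mu)$ with $\delta'={\rm Ad}(k^{-1})\delta\ne\delta$, so the single-Lie-algebra-element map $\mu\mapsto\delta(s(\mu))-s(\delta\mu)$ intertwines only the stabilizer $S^1\subset\SU(2)$ of $\delta$, which is far from enough to force the factorization through $t_M$. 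For the same reason the operator $\partial^2/\partial x\partial y:\cP(2M+2)\to\cP(2M)$ that you invoke is not $\SU(2)$-equivariant (check it on $\kappa(a,b)$: the cross-terms $ac\,\partial_x^2$ and $bd\,\partial_y^2$ do not vanish), so the dual map $V(2M)\to V(2M+2)$, $\nu\mapsto\nu^*$, is also not $\SU(2)$-equivariant; there simply is no nonzero $\SU(2)$-map between these two irreducibles.

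The part of your argument that is correct is the first half: $\delta$ transforms in $V(2)\oplus V(2)$ under ${\rm Ad}(\SU(2))$, hence shifts the $\SU(2)$-type $V(2n)$ by at most one (giving $n-1,n,n+1$, not just $n\pm1$), and combining this with $\rho\circ\delta s=0$ does isolate the $\cB(\chi_{\underline{k}},M+1)$ component. What is actually $\SU(2)$-equivariant is the \emph{bilinear} map $\cG_\C\otimes V(\underline{k}-2)\to D(\underline{k})$, $(X,\mu)\mapsto(1-s\rho)(X\cdot s(\mu)-s(X\mu))$, with $\SU(2)$ acting on $\cG_\C$ by conjugation. Restricting $X$ to $i\mathfrak{su}(2)\simeq V(2)$ and using $V(2)\otimes V(2M)\supset V(2M+2)$ with multiplicity one, one can then argue that this bilinear map factors through $t_M$ and is unique up to scalar. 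But to recover the proposition from this you still have to (i) exhibit the unique $\SU(2)$-equivariant $\Psi_0:i\mathfrak{su}(2)\otimes V(2M)\to V(2M+2)$, (ii) verify that its slice at $X=\delta$ is $\nu\mapsto\nu^*$, and (iii) compute the constant — and steps (i)–(ii) require essentially the same explicit differentiation and combinatorics you were hoping to avoid. The paper instead computes $\delta\varphi_n(\mu)=\varphi_n(\mu_0)+\varphi_{n+1}(\mu_1)+\varphi_{n-1}(\mu_{-1})$ directly from \eqref{descvarphi} (the $\SU(2)$ analogue of $\delta f_n=\tfrac12(\tfrac k2+n)f_{n+1}+\tfrac12(\tfrac k2-n)f_{n-1}$), applies this to the formula for $s$ from Lemma \ref{lemsect2}, and projects by $(1-s\rho)$; the surviving $n=M+1$ term with $\mu_1(P)=-\tfrac{M+n+2}{(n+1)(2n+1)}\mu(\partial^2 P/\partial x\partial y)$ at $n=M$ gives exactly the stated constant. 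You should not expect to bypass that computation by Schur's lemma applied to $\delta s$ alone.
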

\begin{proof}
Notice that, for any {\tiny$\left(\begin{array}{cc}\ast&\ast\\r&s\end{array}\right)$}$\in\GL_2(\C)$ of determinant $\Delta$,
\begin{equation}\label{descvarphi}
\varphi_n(\mu)\left(\begin{array}{cc}\ast&\ast\\r&s\end{array}\right):=\Delta^{\frac{\underline{k}}{2}}\frac{\mu\left(P_n\right)}{(|r|^2+|s|^2)^{M+n+2}},\qquad P_n(r,s):=\left|\begin{array}{cc}\bar s&-\bar r\\x&y\end{array}\right|^{n+\lambda}\left|\begin{array}{cc} r&s\\x&y\end{array}\right|^{n-\lambda}.
\end{equation}
Hence,
\begin{eqnarray*}
\delta \varphi_n(\mu)(\alpha,\beta)&=&\frac{d}{dr}\varphi_n(\mu)(\kappa(\alpha,\beta)\imath(e^r))\mid_{r=0}
=\frac{d}{dr}\left(\frac{e^{r(M+2)}\mu\left(\left(\alpha y-e^r\beta x\right)^{n+\lambda}\left(-e^r\bar\beta y-\bar\alpha x\right)^{n-\lambda}\right)}{\left(|-e^r\bar \beta|^2+|\bar \alpha|^2\right)^{M+n+2}}\right)_{r=0}\\
&=&\left((M+2)-2|\beta|^2(M+n+2)\right)\mu(P_{n}(-\bar\beta,\bar\alpha))+(n+\lambda)\mu\left(\beta x\left(\bar\beta y+\bar\alpha x\right)P_{n-1}(-\bar\beta,\bar\alpha)\right)-\\
&&-(n-\lambda)\mu\left(\bar\beta y\left(\alpha y-\beta x\right)P_{n-1}(-\bar\beta,\bar\alpha)\right)\\
&=&(M+n+2)\left(1-2|\beta|^2\right)\mu(P_{n}(-\bar\beta,\bar\alpha))+(n+\lambda)\mu\left(xyP_{n-1}(-\bar\beta,\bar\alpha)\right)-\\
&&-\lambda\mu\left((\alpha y-\beta x)(\bar\alpha x-\bar\beta y))P_{n-1}(-\bar\beta,\bar\alpha)\right),
\end{eqnarray*}
where in the last equality we have used the identity $(\bar\beta y+\bar \alpha x)\beta=y-(\alpha y-\beta x)\bar\alpha$. If we use $(\bar\beta y+\bar \alpha x)\alpha=x+(\alpha y-\beta x)\bar\beta$ as well, we obtain
\[
\frac{\partial P_n}{\partial x}(-\bar\beta,\bar\alpha)=\left((n+\lambda)(\bar\beta y+\bar \alpha x)\beta-(n-\lambda)(\alpha y-\beta x)\bar\alpha\right)P_{n-1}(-\bar\beta,\bar\alpha)=\left((n+\lambda)y-2n(\alpha y-\beta x)\bar\alpha\right)P_{n-1}(-\bar\beta,\bar\alpha),\]\[
\frac{\partial P_n}{\partial y}(-\bar\beta,\bar\alpha)=\left((\lambda-n)(\alpha y-\beta x)\bar\beta-(n+\lambda)(\bar\beta y+\bar \alpha x)\alpha\right)P_{n-1}(-\bar\beta,\bar\alpha)=\left(2n(\beta x-\alpha y)\bar\beta-(n+\lambda)x\right)P_{n-1}(-\bar\beta,\bar\alpha).
\]
Thus,
\begin{eqnarray*}
\frac{\partial^2 P_{n+1}\mbox{\tiny$(-\bar\beta,\bar\alpha)$}}{\partial y\partial x}
&=&\left(-(n+1+\lambda)+2(n+1)|\beta|^2\right)P_n(-\bar\beta,\bar\alpha)+\left(-(n+1+\lambda)x-2(n+1)(\alpha y-\beta x)\bar\beta\right)\frac{\partial P_n}{\partial x}(-\bar\beta,\bar\alpha)\\
&=&(n+1)(2n+1)(2|\beta|^2-1)P_n(-\bar\beta,\bar\alpha)+\lambda(2n+1)(\bar\alpha x-\bar\beta y)(\alpha y-\beta x)P_{n-1}(-\bar\beta,\bar\alpha)-\\
&&-(n+\lambda)(n+\lambda+1)xyP_{n-1}(-\bar\beta,\bar\alpha).
\end{eqnarray*}
We deduce 
\begin{eqnarray*}
\delta \varphi_n(\mu)(\alpha,\beta)&=&
\frac{\lambda(M+1)}{2n(n+1)}\mu\left(\left(y\frac{\partial P_n}{\partial y}\mbox{\tiny$(-\bar\beta,\bar\alpha)$}-x\frac{\partial P_n}{\partial x}\mbox{\tiny$(-\bar\beta,\bar\alpha)$}\right)\right)-\frac{(M+n+2)}{(n+1)(2n+1)}\mu\left(\frac{\partial P_{n+1}}{\partial x\partial y}\mbox{\tiny$(-\bar\beta,\bar\alpha)$}\right)+\\
&&+\frac{(n+\lambda)(n-\lambda)}{n}\left(\frac{n-M-1}{2n+1}\right)\mu\left(xyP_{n-1}(-\bar\beta,\bar\alpha)\right).
\end{eqnarray*}
Therefore,
$\delta \varphi_n(\mu)=\varphi_n(\mu_0)+\varphi_{n+1}(\mu_1)+\varphi_{n-1}(\mu_{-1})$,
for $\mu_0\in V(2n)$, $\mu_1\in V(2n+2)$ and $\mu_{-1}\in V(2n-2)$, where 
\begin{eqnarray*}
\mu_0(P)&:=&\frac{\lambda(M+1)}{2n(n+1)}\mu\left(
y\frac{\partial P}{\partial y}-x\frac{\partial P}{\partial x}\right),\qquad\mu_1(P):=-\frac{(M+n+2)}{(n+1)(2n+1)}\mu\left(\frac{\partial P}{\partial x\partial y}\right),\\
\mu_{-1}(P)&=&\frac{(n+\lambda)(n-\lambda)}{n}\left(\frac{n-M-1}{2n+1}\right)\mu\left(xyP\right).
\end{eqnarray*}

Recall that $\rho(\delta s(\underline{\mu}))=0$, hence $(1-\rho)\delta s(\underline{\mu})=\delta s(\underline{\mu})$. But $(1-\rho)(V(2n))=0$ for all $|\lambda|\leq n\leq M$. We conclude
\begin{eqnarray*}
\delta s(\underline{\mu})&=&(1-\rho)\sum_{|\lambda|\leq n\leq M}(2n+1)\binom{2n}{n+\lambda}\left(\varphi_n(t_n(\underline{\mu})_0)+\varphi_{n+1}(t_n(\underline{\mu})_1)+\varphi_{n-1}(t_n(\underline{\mu})_{-1})-\varphi_n(t_n(\delta\underline{\mu}))\right)\\
&=&(2M+1)\binom{2M}{k_{\rm id}-2}\varphi_{M+1}(t_M(\underline{\mu})_1),
\end{eqnarray*}
and the result follows.
\end{proof}



\subsubsection{Inner products and pairings
}\label{calcI(chi)2}

Let us consider the $\PGL_2(\C)$-invariant pairing
\[
\langle:\;,\;\rangle_\cB:\cB(\chi_{\underline{k}})\times \cB(\chi_{2-{\underline{k}}})\longrightarrow\C;\qquad \langle f,h\rangle_\cB:=\int_{S^3}f(\kappa(\alpha,\beta))\cdot h(\kappa(\alpha,\beta))d(\alpha,\beta).
\]
It satisfies the property
\[
\langle \imath f,\rho^\vee\mu\rangle_\cB=0, \qquad\mbox{for all }\quad f\in D(\underline{k}),\;\mu\in V(\underline{k}-2).
\]
Again it provides (the unique up to constant) $\PGL_2(\C)$-invariant bilinear inner products on $D(\underline{k})$ and $V(\underline{k}-2)$: 
\begin{eqnarray*}
\langle\;,\;\rangle:D(\underline{k})\times D(\underline{k})\longrightarrow\C;&\qquad& \langle f,h\rangle:=\langle \imath f,s'h\rangle_\cB,\\
\langle\;,\;\rangle_V:V(\underline{k}-2)\times V(\underline{k}-2)\longrightarrow\C;&\qquad& \langle \mu_1,\mu_2\rangle_V:=\langle s(\mu_1),\rho^\vee(\mu_2)\rangle_\cB.
\end{eqnarray*}


As in previous sections, we aim to compute as well
\[
\cI(\chi,\underline{m}):=\int_{T(\C)}\chi(t)\langle \imath(t)\delta s(\mu_{\underline{m}}),\delta s(\mu_{-\underline{m}})\rangle t^{\underline{m}}d^\times t, 
\]
for a locally constant character $\chi$ and the Haar measure $d^\times t$ of \eqref{defdtx}.
Since $\iota(t)\mu_{\underline{m}}=t^{-\underline{m}}\mu_{\underline{m}}$, we have
\[
\delta\mu_{\underline{m}}=\frac{d}{dr}(\imath(e^r)\mu_{\underline{m}})\mid_{r=0}=\frac{d}{dr}(e^{-mr}\mu_{\underline{m}})\mid_{x=0}=-m\cdot\mu_{\underline{m}},
\] 
where $m:=m_{\rm id}+m_c$.
Hence, let us consider again the function 
\[
F(t)=t^{\underline{m}}\langle \imath(t)s(\mu_{\underline{m}}),s'{\delta s(\mu_{-\underline{m}})}\rangle_\cB.
\]

By remark \ref{normalizationD2}, $s'(\varphi_{M+1}(\mu))=\varphi_{M+1}(\mu)$ for all $\mu\in V(2M+2)$. Thus, by proposition \ref{keylemma2}, we have
\begin{equation}\label{ssdelta}
s'\delta s(\mu_{-\underline{m}})=-2\binom{2M}{k_{\rm id}-2}\varphi_{M+1}(t_M(\mu_{-\underline{m}})^*).
\end{equation}
Hence,
\[
s'\delta s(\mu_{-\underline{m}})\left(\begin{array}{cc}\ast&\ast\\R&S\end{array}\right)=-2\binom{2M}{k_{\rm id}-2}\frac{t_M(\mu_{-\underline{m}})^\ast\left(\left|\begin{array}{cc}\bar S&-\bar R\\x&y\end{array}\right|^{k_c-1}\left|\begin{array}{cc} R&S\\x&y\end{array}\right|^{k_{\rm id}-1}\right)}{\Delta^{\frac{\underline{k}}{2}-1}\cdot \left(\left|R\right|^2+\left|S\right|^2\right)},
\]
where $\Delta$ is the deteminant of $\left(\begin{array}{cc}\ast&\ast\\R&S\end{array}\right)$. 
\begin{remark}\label{pairP}
The isomorphism $\iota:V(2n)\stackrel{\simeq}{\rightarrow}\cP(2n)$ of \eqref{dualVP} provides a $\GL_2(\C)$-equivariant pairing
\[
\langle\;,\;\rangle_{\cP}:\cP(2n)\times\cP(2n)\longrightarrow\C,
\]
where $\langle P_1,P_2\rangle_{\cP}=\iota^{-1}(P_1)(P_2)$. 
In the usual basis
\[
\langle x^{i}y^{2n-i},x^{j}y^{2n-j}\rangle_{\cP}=\left\{\begin{array}{ll}(-1)^{i}\binom{2n}{i}^{-1},&i=2n-j\\0,&i\neq 2n-j\end{array}\right.
\]
\end{remark}

\begin{remark}\label{pairV}
Similarly as in previous remark \ref{pairP}, the isomorphism $\iota:V(k_{\sigma}-2)\stackrel{\simeq}{\rightarrow}\cP(k_{\sigma}-2)$ provides a $\GL_2(\C)$-equivariant pairing
\[
\langle\;,\;\rangle':V(\underline{k}-2)\times V(\underline{k}-2)\longrightarrow\C.
\]
such that, for $\mu_{\underline{m}}$, $\mu_{\underline{m}'}$ as in Definition \ref{defmum},
\begin{equation}\label{pairmum}
\langle \mu_{\underline{m}},\mu_{\underline{m}'}\rangle'=\left\{\begin{array}{ll}(-1)^{M+m}\binom{k_{\rm id}-2}{\frac{k_{\rm id}-2}{2}-m_{\rm id}}^{-1}\binom{k_{c}-2}{\frac{k_{c}-2}{2}-m_{c}}^{-1},&\underline{m}=-\underline{m}'\\0,&\underline{m}\neq -\underline{m}'\end{array}\right.
\end{equation}
On the other side, we have defined another pairing $\langle\;,\;\rangle_V$ on $V(\underline{k}-2)$. By Schur's lemma both pairings must be the same up to constant. To compute such a constant we can consider the elements $\mu_1:=\mu_{\underline{m}_0}$, where $\underline{m}_0:=(\frac{2-k_{\rm id}}{2},\frac{k_c-2}{2})$, and $\mu_2\in V(\underline{k}-2)$ such that $t_n(\mu_2)=0$ for $n\neq M$ and $t_M(\mu_2)=(x^{2M})^\vee$. In this situation 
\begin{eqnarray*}
\langle \mu_2,\mu_1\rangle'&=&\mu_2\mu_1\left(\left|\begin{array}{cc}X'&Y'\\X&Y\end{array}\right|^{\underline{k}-2}\right)=\rho\circ s(\mu_2)(X_{\rm id}^{k_{\rm id}-2}Y_c^{k_c-2})\\
&=&\rho\left((2M+1)\binom{2M}{M+\lambda}\varphi_M((x^{2M})^\vee)\right)(X_{\rm id}^{k_{\rm id}-2}Y_c^{k_c-2})=1,
\end{eqnarray*}
by \eqref{norm2}. On the other side, we compute 
\begin{eqnarray*}
\langle \mu_2,\mu_1\rangle_V&=&\int_{S^3}s( \mu_2)(\kappa(\alpha,\beta)) \cdot\mu_1\left(\left|\begin{array}{cc}X&Y\\-\bar\beta&\bar\alpha\end{array}\right|^{\underline{k}-2}\right)d(\alpha,\beta)\\
&=&(2M+1)\binom{2M}{M+\lambda}^{-1}\int_{S^3}\varphi_M((x^{2M})^\vee)\left(\begin{array}{cc}\alpha&\beta\\-\bar\beta&\bar\alpha\end{array}\right)(-\bar\beta)^{k_{\rm id}-2}\alpha^{k_c-2}d(\alpha,\beta)\\
&=&\frac{(2M+1)}{\binom{2M}{M+\lambda}^{-1}}\int_{S^3}(x^{2M})^\vee\left(\left|\begin{array}{cc}\alpha&\beta\\x&y\end{array}\right|^{M+\lambda}\left|\begin{array}{cc}-\bar\beta&\bar\alpha\\x&y\end{array}\right|^{M-\lambda}\right)(-\bar\beta)^{k_{\rm id}-2}\alpha^{k_c-2}d(\alpha,\beta)\\
&=&(2M+1)\binom{2M}{M+\lambda}^{-1}\int_{S^3}(-\beta)^{M+\lambda}(-\bar\alpha)^{M-\lambda}(-\bar\beta)^{k_{\rm id}-2}\alpha^{k_c-2}d(\alpha,\beta)=1,
\end{eqnarray*}
by lemma \ref{auxlem}. Thus, both pairings coincide and \eqref{pairmum} also holds for $\langle\;,\;\rangle_V$.
\end{remark}

\begin{proposition}\label{calcinfty02}
We have that 
\[
F(\infty)-F(0)=(-1)^{m+M}2(2M+1)(2M+2)\binom{2M}{k_{\rm id}-2}\binom{k_{\rm id}-2}{\frac{k_{\rm id}-2}{2}-m_{\rm id}}^{-1}\binom{k_{c}-2}{\frac{k_{c}-2}{2}-m_{c}}^{-1}.
\]
where again $F(\ast)=\lim_{t\to\ast} F(t)$.
\end{proposition}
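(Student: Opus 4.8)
The argument is the $\PGL_2(\C)$-counterpart of the proof of Proposition \ref{calcinfty0}: I would repeat that computation with $S^1$, $\SO(2)$ replaced by $S^3$, $\SU(2)$, and with Lemma \ref{auxlem} playing the role of orthogonality of characters of $S^1$. First, using $\PGL_2(\C)$-invariance of $\langle\;,\;\rangle_\cB$ to move $\imath(t)$ across, write $F(t)=t^{\underline m}\langle s(\mu_{\underline m}),\imath(t^{-1})s'\delta s(\mu_{-\underline m})\rangle_\cB$ and substitute the explicit shape \eqref{ssdelta} of $s'\delta s(\mu_{-\underline m})$ together with the formula \eqref{descvarphi} for $\varphi_{M+1}$. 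Since $\kappa(\alpha,\beta)\imath(t^{-1})$ has bottom row $(-\bar\beta t^{-1},\bar\alpha)$ and determinant $t^{-1}$, evaluating there produces a quotient whose numerator is $t_M(\mu_{-\underline m})^\ast$ applied to an explicit product of two determinant-powers (hence a polynomial in $\alpha,\bar\alpha,\beta,\bar\beta$ times a monomial in $t,\bar t$), divided by $|\alpha|^2+|t|^{-2}|\beta|^2$ up to another monomial in $t,\bar t$. This expresses $F(t)$ as an integral over $S^3$.

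Next, pass to the coordinates $\alpha=\cos\theta\,e^{i\phi_1}$, $\beta=\sin\theta\,e^{i\phi_2}$ adapted to the Haar measure of Lemma \ref{auxlem}, so that the denominator is $\cos^2\theta+|t|^{-2}\sin^2\theta$. Expanding this as a geometric series in $|t|^{-2}\tan^2\theta$ for the limit $t\to\infty$, and in $|t|^2\cot^2\theta$ for the limit $t\to0$, and integrating term by term, the $\phi_1,\phi_2$-integrations annihilate every contribution except the ones obeying the balance condition of Lemma \ref{auxlem}. After extracting the explicit powers of $t$ and $\bar t$, the surviving $\theta$-integrals are exactly $\int_{S^3}s(\mu_{\underline m})(\kappa(\alpha,\beta))\,\rho^\vee(\mu_{\underline m'})(\kappa(\alpha,\beta))\,d(\alpha,\beta)=\langle\mu_{\underline m},\mu_{\underline m'}\rangle_V$ over the multi-indices $\underline m'$ produced by the two determinant expansions; by \eqref{pairmum} these all vanish unless $\underline m'=-\underline m$, which isolates a single surviving term in each limit, with value governed by $\langle\mu_{\underline m},\mu_{-\underline m}\rangle_V$.

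Finally, as in Proposition \ref{calcinfty0}, forming $F(\infty)-F(0)$ fuses the two one-sided series into a bilateral binomial sum that collapses to a closed form; assembling this with the prefactor $-2\binom{2M}{k_{\rm id}-2}$ of \eqref{ssdelta}, the normalizations $(2n+1)\binom{2n}{n+\lambda}$ of Lemma \ref{lemsect2} (note $M+\lambda=k_{\rm id}-2$), the integrals of Lemma \ref{auxlem}, and the explicit value of $\langle\mu_{\underline m},\mu_{-\underline m}\rangle_V$ from \eqref{pairmum}, yields the stated identity. The main obstacle is exactly this bookkeeping: one must track the net powers of $t$ and $\bar t$ carefully enough to confirm that in each limit precisely the $t$-degree-zero term survives, while simultaneously threading the two balance constraints of Lemma \ref{auxlem} through the several binomial expansions of the determinants and the geometric series — what in the real case was the one-parameter identification $C(n)=\langle\mu_m,\mu_{n-\frac{k-2}{2}}\rangle_V$ becomes here a genuinely two-index matching that is easy to mishandle.
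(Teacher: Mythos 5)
Your outline is essentially the paper's own proof: move $\imath(t)$ across the $\cB$-pairing, substitute \eqref{ssdelta} and \eqref{descvarphi}, expand the denominator $|\alpha|^2+|t|^{-2}|\beta|^2$ as a geometric series separately near $t\to0$ and $t\to\infty$, recognize the surviving $S^3$-integrals as $\langle\mu_{\underline m},\mu_{\underline m_N}\rangle_V$ so that \eqref{pairmum} isolates one term on each side, and fuse the two one-sided sums via a Chu–Vandermonde collapse. The only cosmetic difference is that you invoke Lemma \ref{auxlem} in polar coordinates directly, whereas the paper routes the same orthogonality through the identification $p(N)=(-1)^{N+M+m}\langle\mu_{\underline m},\mu_{\underline m_N}\rangle_V$ and Remark \ref{pairV}; these are equivalent.
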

\begin{proof}
By remark \ref{pairP} we have that
\[
t_M(\mu_{-\underline{m}})(P)=\langle\mu_{-\underline{m}}(\Delta_M),P \rangle_\cP=(-1)^{\frac{k_c-2}{2}-m_c}\langle y^{M-m_{\rm id}+m_c}x^{M+m_{\rm id}-m_c},P \rangle_\cP,
\]
for all $P\in\cP(2M)$.
Hence, again by remark \ref{pairP}, if we write $\bar m:=m_{\rm id}-m_c$ then
\begin{eqnarray*}
&&t_M(\mu_{-\underline{m}})^\ast\left(\mbox{\tiny$\left|\begin{array}{cc}A&-B\\x&y\end{array}\right|^{k_c-1}\left|\begin{array}{cc} C&-D\\x&y\end{array}\right|^{k_{\rm id}-1}$}\right)=t_M(\mu_{-\underline{m}})\left(\mbox{\tiny$2AB\binom{k_c-1}{2}\left|\begin{array}{cc}A&-B\\x&y\end{array}\right|^{k_c-3}\left|\begin{array}{cc} C&-D\\x&y\end{array}\right|^{k_{\rm id}-1}$}+\right.\\
&+&\left.\mbox{\tiny$(AD+BC)(k_{\rm id}-1)(k_c-1)\left|\begin{array}{cc}A&-B\\x&y\end{array}\right|^{k_c-2}\left|\begin{array}{cc} C&-D\\x&y\end{array}\right|^{k_{\rm id}-2}$}+\mbox{\tiny$2CD\binom{k_{\rm id}-1}{2}\left|\begin{array}{cc}A&-B\\x&y\end{array}\right|^{k_c-1}\left|\begin{array}{cc} C&-D\\x&y\end{array}\right|^{k_{\rm id}-3}$}\right)\\
&=&\frac{(-1)^{\frac{k_{\rm id}-2}{2}-m_{\rm id}}}{\binom{2M}{M+\bar m}}\left(2\binom{k_c-1}{2}\sum_i\binom{k_c-3}{i}\binom{k_{\rm id}-1}{M+\bar m-i}A^{i+1}B^{k_c-2-i}C^{M+\bar m-i}D^{i+\lambda-\bar m+1}\right.+\\
&+&\sum_i(AD+BC)(k_{\rm id}-1)(k_c-1)\binom{k_c-2}{i}\binom{k_{\rm id}-2}{M+\bar m-i}A^{i}B^{k_c-2-i}C^{M+\bar m-i}D^{\lambda-\bar m+i}+\\
&+&\left.2\binom{k_{\rm id}-1}{2}\sum_i\binom{k_c-1}{i}\binom{k_{\rm id}-3}{M+\bar m-i}A^{i}B^{k_c-1-i}C^{M+\bar m-i+1}D^{i+\lambda-\bar m}\right)\\
&=&\frac{(-1)^{\frac{k_{\rm id}-2}{2}-m_{\rm id}}(2M+1)(2M+2)}{\binom{2M+2}{M+\bar m+1}}\left(\sum_i\binom{k_c-1}{i}\binom{k_{\rm id}-1}{\lambda-\bar m+i}A^{i}B^{k_c-1-i}C^{M+\bar m-i+1}D^{i+\lambda-\bar m}\right).
\end{eqnarray*}

Thus, by equation \eqref{ssdelta}, the function $F(t)=t^{\underline{m}}\langle s(\mu_{\underline{m}}),\imath(t^{-1})s'{\delta s(\mu_{-\underline{m}})}\rangle_\cB$ is given by
\begin{eqnarray*}
F(t)&=&-2\binom{2M}{k_{\rm id}-2}t^{\underline{m}+\frac{\underline{k}-2}{2}}\int_{S^3}s(\mu_{\underline{m}})(\kappa(\alpha,\beta))\frac{t_M(\mu_{-\underline{m}})^\ast\left(\mbox{\tiny$\left|\begin{array}{cc}\alpha&\bar  t^{-1}\beta\\x&y\end{array}\right|^{k_c-1}\left|\begin{array}{cc} -t^{-1}\bar\beta&\bar\alpha\\x&y\end{array}\right|^{k_{\rm id}-1}$}\right)}{ \left(\left|\alpha\right|^2+r^{-2}\left|\beta\right|^2\right)}d(\alpha,\beta)\\
&=&k_M\sum_i\binom{k_c-1}{i}\binom{k_{\rm id}-1}{\lambda-\bar m+i}r^{2i+2+2m_c-k_c}\int_{S^3}s(\mu_{\underline{m}})(\kappa(\alpha,\beta))\frac{\alpha^{i}(-\beta)^{k_c-1-i}(-\bar\beta)^{M+\bar m-i+1}(-\bar\alpha)^{i+\lambda-\bar m}}{\left(r^2\left|\alpha\right|^2+\left|\beta\right|^2\right)}d(\alpha,\beta)
\end{eqnarray*}
where $k_M:=\frac{2(-1)^{\frac{k_{\rm id}}{2}-m_{\rm id}}(2M+1)(2M+2)\binom{2M}{k_{\rm id}-2}}{\binom{2M+2}{M+\bar m+1}}$.

If $r<<0$, we use $\left(r^2\left|\alpha\right|^2+\left|\beta\right|^2\right)^{-1}=|\beta|^{-2}\sum_{j\geq 0}\left(-r^2|\alpha|^2|\beta|^{-2}\right)^j$ to obtain
\begin{eqnarray*}
F(t)&=&k_M\sum_i\binom{k_c-1}{i}\binom{k_{\rm id}-1}{\lambda-\bar m+i}\sum_{j\geq 0}r^{2m_c-k_c+2i+2+2j}p(i+j)\\
&=&k_M\sum_{N\geq 0}p(N)\cdot r^{2m_c-k_c+2N+2}\sum_{i\leq N}\binom{k_c-1}{i}\binom{k_{\rm id}-1}{\lambda-\bar m+i}
\end{eqnarray*}
where 
\[
p(N):=\int_{S^3}s(\mu_{\underline{m}})(\kappa(\alpha,\beta))\cdot\alpha^{N}(-\beta)^{k_c-2-N}(-\bar\alpha)^{N+\lambda-\bar m}(-\bar\beta)^{M+\bar m-N}d(\alpha,\beta)
\]
By definition, for any $\underline{m}'=(m_{\rm id}',m_c')$,
\[
\rho^\vee(\mu_{\underline{m}'})(\kappa(\alpha,\beta))=\mu_{\underline{m}'}\left(\left|\begin{array}{cc}X&Y\\-\bar\beta&\bar\alpha\end{array}\right|^{\underline{k}-2}\right)=(-\bar\beta)^{\frac{k_{\rm id}-2}{2}-m_{\rm id}'}\bar\alpha^{\frac{k_{\rm id}-2}{2}+m_{\rm id}'}(-\beta)^{\frac{k_{c}-2}{2}-m_{c}'}\alpha^{\frac{k_c-2}{2}+m_{c}'}.
\]
Hence, if we assume $N\leq \frac{k_c-2}{2}-m_c$ then we have that  
\[
\frac{2-\underline k}{2}\leq\underline{m}_N:=\left(N-\frac{k_c-2}{2}-\bar m,N-\frac{k_c-2}{2}\right)\leq \frac{\underline k-2}{2}. 
\]
Moreover, by remark \ref{pairV},
\[
p(N):=(-1)^{N+\lambda+\bar m}\langle s(\mu_{\underline{m}}),\rho^\vee(\mu_{\underline{m}_N})\rangle_\cB=(-1)^{N+M+m}\langle \mu_{\underline{m}},\mu_{\underline{m}_N}\rangle_V.
\]
Hence, 
we conclude that 
\[
F(0)=k_M(-1)^{\frac{k_{\rm id}-2}{2}-m_{\rm id}}\langle \mu_{\underline{m}},\mu_{-\underline{m}}\rangle_V\sum_{i\leq \frac{k_c-2}{2}-m_c}\binom{k_c-1}{i}\binom{k_{\rm id}-1}{\lambda-\bar m+i}.
\]

If $r>>0$, we use $\left(r^2\left|\alpha\right|^2+\left|\beta\right|^2\right)^{-1}=r^{-2}|\alpha|^{-2}\sum_{j\geq 0}\left(-r^{-2}|\beta|^2|\alpha|^{-2}\right)^j$ to obtain
\begin{eqnarray*}
F(t)&=&-k_M\sum_i\binom{k_c-1}{i}\binom{k_{\rm id}-1}{\lambda-\bar m+i}\sum_{j\geq 0}r^{2m_c-k_c+2i-2j}p(i-j-1)\\
&=&-k_M\sum_{N\leq k_c-2}p(N)\cdot r^{2m_c-k_c+2+2N}\sum_{i\geq N+1}\binom{k_c-1}{i}\binom{k_{\rm id}-1}{\lambda-\bar m+i}
\end{eqnarray*}
Again for $N\geq \frac{k_c-2}{2}-m_c$, we check that $\frac{2-\underline k}{2}\leq\underline{m}_N\leq \frac{\underline k-2}{2}$. 
Similarly as above we obtain
\[
F(\infty)=-k_M(-1)^{\frac{k_{\rm id}-2}{2}-m_{\rm id}}\langle \mu_{\underline{m}},\mu_{-\underline{m}}\rangle_V\sum_{i\geq \frac{k_c}{2}-m_c}\binom{k_c-1}{i}\binom{k_{\rm id}-1}{\lambda-\bar m+i}.
\]

We conclude again by remark \ref{pairV} that
\begin{eqnarray*}
F(\infty)-F(0)&=&k_M(-1)^{m_{\rm id}+\frac{k_{\rm id}}{2}}\langle \mu_{\underline{m}},\mu_{-\underline{m}}\rangle_V\sum_{i}\binom{k_c-1}{i}\binom{k_{\rm id}-1}{\lambda-\bar m+i}\\
&=&k_M(-1)^{m_{\rm id}+\frac{k_{\rm id}}{2}}\langle \mu_{\underline{m}},\mu_{-\underline{m}}\rangle_V\binom{2M+2}{M-\bar m+1}=(-1)^{m+M}\frac{2(2M+1)(2M+2)\binom{2M}{k_{\rm id}-2}}{\binom{k_{\rm id}-2}{\frac{k_{\rm id}-2}{2}-m_{\rm id}}\binom{k_{c}-2}{\frac{k_{c}-2}{2}-m_{c}}}.
\end{eqnarray*}
 and the result follows.
\end{proof}

\begin{theorem}\label{calcLI2}
Given a locally constant character $\chi:T(\C)\rightarrow\C^\times$, we denote 
\[
\cI(\chi,{\underline m}):=\int_{T(\C)}\chi(t)\langle\imath(t)\delta s(\mu_{\underline m}),\delta s(\mu_{-\underline m})\rangle t^{\underline m}d^\times t,
\]
where $d^\times t$ is the Haar measure \eqref{defdtx}. Then 
\[
\cI(\chi,{\underline m})=\left\{\begin{array}{ll}(-1)^{M+m}8(2M+1)(2M+2)\binom{2M}{k_{\rm id}-2}\binom{k_{\rm id}-2}{\frac{k_{\rm id}-2}{2}-m_{\rm id}}^{-1}\binom{k_{c}-2}{\frac{k_{c}-2}{2}-m_{c}}^{-1},&\chi=1,\\0,&\chi\neq 1.\end{array}\right.
\]
\end{theorem}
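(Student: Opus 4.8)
The plan is to run the argument for Theorem~\ref{calcLI1} again, with the angular integration over the compact subgroup $S^1\subset\C^\times$ of $T(\C)$ playing the rôle that the Weyl element $w=\big(\begin{smallmatrix}1&\\&-1\end{smallmatrix}\big)$ played there (the maximal compact of $\PGL_2(\C)$ being connected, there is no analogue of $w$). First I would note that, $T(\C)=\C^\times$ being connected, a locally constant character $\chi$ is trivial; thus the case $\chi\neq 1$ cannot occur and we may assume $\chi=1$. We retain the vanishing clause only for uniformity with the other archimedean cases; in fact the computation below yields $\cI(\chi,\underline m)=0$ for \emph{any} character of $T(\C)$ that is trivial on $T(\C)_0=\R_+$ but not on $S^1$, again by the angular integration.

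Writing $t=re^{i\theta}$ with $r\in(0,\infty)$ and $\theta\in[0,2\pi)$, recall from \eqref{defdtx} that $d^\times t=\tfrac2\pi\,r^{-1}dr\,d\theta$. The key point is that the integrand
\[
\psi_0(t):=\langle\imath(t)\delta s(\mu_{\underline m}),\delta s(\mu_{-\underline m})\rangle\,t^{\underline m}
\]
depends only on $r$. Indeed $\imath(e^{i\theta})=[\mathrm{diag}(e^{i\theta/2},e^{-i\theta/2})]$ lies in the maximal compact $K$; since $s,s'$ are $K$-equivariant (Lemma~\ref{lemsect2}) and commute with the Lie algebra generator $\delta$ of the abelian torus $T(\C)$, while $\imath(t)\mu_{\pm\underline m}=t^{\mp\underline m}\mu_{\pm\underline m}$ (Definition~\ref{defmum}), the vectors $\delta s(\mu_{\underline m})$ and $\delta s(\mu_{-\underline m})$ carry opposite $S^1$-weights $\mp(m_{\rm id}-m_c)$; by $\PGL_2(\C)$-invariance of $\langle\,,\,\rangle$ these exactly cancel the phase $(e^{i\theta})^{\underline m}=e^{i(m_{\rm id}-m_c)\theta}$ of $t^{\underline m}$, so $\psi_0(e^{i\theta}t)=\psi_0(t)$. (The same $\theta$-independence is visible in the explicit shape of $F$ obtained in the proof of Proposition~\ref{calcinfty02}.) Second, exactly as in the real case — differentiating $F(t)=t^{\underline m}\langle\imath(t)s(\mu_{\underline m}),s'\delta s(\mu_{-\underline m})\rangle_\cB$ in the radial direction, using $\delta\mu_{\pm\underline m}=\mp m\,\mu_{\pm\underline m}$ with $m=m_{\rm id}+m_c$, together with $\rho\circ s=\mathrm{id}$ and $\rho\circ\delta s=0$ — one gets $r\,\partial_r F(re^{i\theta})=\psi_0(re^{i\theta})$.

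Assembling the two points, by Fubini, the $\theta$-independence of $\psi_0$, and the fundamental theorem of calculus in $r$,
\[
\cI(1,\underline m)=\frac2\pi\int_0^{2\pi}\!\Big(\int_0^\infty \psi_0(re^{i\theta})\,\frac{dr}{r}\Big)d\theta=\frac2\pi\cdot 2\pi\int_0^\infty\partial_r F(r)\,dr=4\bigl(F(\infty)-F(0^+)\bigr),
\]
and Proposition~\ref{calcinfty02} evaluates $F(\infty)-F(0)$; the factor $4$ turns its leading coefficient $2(2M+1)(2M+2)$ into the $8(2M+1)(2M+2)$ appearing in the statement, the three binomial factors being unchanged.

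The only step requiring real care is the $\theta$-independence of $\psi_0$: it is what guarantees that the angular integral simply contributes $\vol(S^1)=4$ and that the radial integral is taken along a single ray. Granting this, everything else is of the same elementary nature as the manipulations already carried out in \S\ref{calcI(chi)} and \S\ref{calcI(chi)2}, so that the genuinely new input — the radial asymptotics of $F$ — is entirely supplied by Proposition~\ref{calcinfty02}.
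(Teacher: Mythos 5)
Your proof is correct and follows essentially the same route as the paper: pass to polar coordinates $t=re^{i\theta}$, identify $r\,\partial_r F$ with the integrand, then split $\cI$ into the $\theta$-integral (giving $4$ or $0$ according to whether $\chi|_{S^1}$ is trivial) times the radial integral $F(\infty)-F(0^+)$ supplied by Proposition~\ref{calcinfty02}. The only places where you add value are (i) you make explicit the $\theta$-independence of $\psi_0$ and of $F$ (the paper uses this silently when it writes $\int_0^\infty r\,\partial_r F\,\frac{dr}{r}=F(\infty)-F(0)$ as a constant pulled out of the $\theta$-integral), and (ii) you observe that a genuinely \emph{locally constant} character of the connected group $\C^\times$ is forced to be trivial, so the vanishing clause really refers to characters trivial on $\R_+$ but nontrivial on $S^1$ — which is what the paper's use of $\chi(\theta)$ in the proof and $\chi_{0,\sigma}$ in Proposition~\ref{WaldsforPhi2} is implicitly tracking. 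One small wording slip: you say $s,s'$ ``commute with the Lie algebra generator $\delta$'', which is false (if they did, $\delta s$ would vanish identically); the correct statement underlying your cancellation is that $\delta$ commutes with the $S^1$-action, so since $s,s'$ are $K$-equivariant and $\mu_{\pm\underline m}$ are $S^1$-eigenvectors, $\delta s(\mu_{\pm\underline m})$ inherit the same $S^1$-weights $\mp(m_{\rm id}-m_c)$. Your conclusion and the subsequent computation are unaffected.
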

\begin{proof}
Using polar coordinates $t=re^{i\theta}$, the function $F(t)$
satisfies
\begin{eqnarray*}
r\frac{\partial}{\partial r}F(t)&=&mt^{\underline{m}}\langle \imath(t) s(\mu_{\underline{m}}),s'{\delta s(\mu_{-\underline{m}})}\rangle_\cB+t^{\underline{m}}\frac{d}{d x}\langle \imath(te^x) s(\mu_{\underline{m}}),s'{\delta s(\mu_{-\underline{m}})}\rangle_\cB\mid_{x=0}\\
&=&t^{\underline{m}}\left(-\langle \imath(t) s(\delta\mu_{\underline{m}}),s'{\delta s(\mu_{-\underline{m}})}\rangle_\cB+\langle \imath(t) \delta(s(\mu_{\underline{m}})),s'{\delta s(\mu_{-\underline{m}})}\rangle_\cB\right)\\
&=&t^{\underline{m}}\langle \imath(t)\delta s(\mu_{\underline{m}}),\delta s(\mu_{-\underline{m}})\rangle.
\end{eqnarray*}
Hence, 
\begin{eqnarray*}
I(\chi,\underline{m})&=&\frac{2}{\pi}\int_{0}^{2\pi}\chi(\theta)\int_0^\infty \langle \imath(t)\delta s(\mu_{\underline{m}}),\delta s(\mu_{-\underline{m}})\rangle t^{\underline{m}}\frac{dr}{r} d\theta\\
&=&\frac{2}{\pi}\int_{0}^{2\pi}\chi(\theta) \int_0^\infty r\frac{\partial}{\partial r}F(t)\frac{dr}{r}d\theta=\left\{\begin{array}{ll}4(F(\infty)-F(0)),&\chi=1,\\0,&\chi\neq 1,\end{array}\right.
\end{eqnarray*}
and the result follows from proposition \ref{calcinfty02}.
\end{proof}

\subsubsection{Pairings of canonical sections}

Let us consider the canonical $\GL_2(\C)$-invariant element
\[
\Upsilon=\iota^{-1}\left|\begin{array}{cc}
    \underline x_1 & \underline y_1 \\
    \underline x_2 & \underline y_2
\end{array}\right|^{\underline k-2}\in V(\underline{k}-2)\otimes V(\underline{k}-2),
\]
where $\iota:V(\underline{k}-2)\otimes V(\underline{k}-2)\stackrel{\simeq}{\rightarrow}\cP(\underline{k}-2)\otimes \cP(\underline{k}-2)$ is the isomorphism of \eqref{dualVP}
and $\underline x_i=(x_{i,{\rm id}},x_{i,c})$.
Since we have the morphism $\delta s:V(\underline{k}-2)\longrightarrow D(\underline{k})$, we can consider $\delta s(\Upsilon)\in D(\underline{k})\otimes D(\underline{k})$ and apply the inner product $\langle\;;\;\rangle$ defined in the beginning of \S \ref{calcI(chi)2}. 
\begin{lemma}\label{lemoonpairupsilon}
    We have that 
    \[
    \langle\delta s(\Upsilon)\rangle=\frac{2}{3}\binom{2M}{k_{\rm id}-2}^2\binom{2M+2}{k_{\rm id}-1}^{-1}(2M+2)^{2}(2M+1)^{2}.
    \]
\end{lemma}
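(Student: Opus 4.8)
The strategy mirrors the computation of $\langle\delta s_+(\Upsilon)\rangle$ for the real place in Lemma \ref{lemoonpairupsilon1}, but now using the explicit description of $\delta s$ in the complex case given by Proposition \ref{keylemma2}. The key point is that $\delta s(\underline\mu) = -2\binom{2M}{k_{\rm id}-2}\varphi_{M+1}(t_M(\underline\mu)^*)$, so applying $\delta s\otimes\delta s$ to $\Upsilon$ and then the $\PGL_2(\C)$-invariant pairing $\langle\;,\;\rangle$ on $D(\underline k)\otimes D(\underline k)$ reduces the problem to evaluating a pairing between two elements of the form $\varphi_{M+1}(\cdot)$ sitting in $\cB(\chi_{\underline k},M+1)$. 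First I would expand $\Upsilon = \iota^{-1}\bigl|\begin{smallmatrix}\underline x_1&\underline y_1\\\underline x_2&\underline y_2\end{smallmatrix}\bigr|^{\underline k-2}$ in the basis $\{\mu_{\underline m}\}$ (using Remark \ref{remarkmumxy2} and a multinomial expansion of the $2\times 2$ determinant powers), so that $\Upsilon = \sum_{\underline m}c_{\underline m}\,\mu_{\underline m}\otimes\mu_{-\underline m}$ with coefficients $c_{\underline m}$ given by products of binomial coefficients and signs — exactly as in the real case where $\langle\delta s_+(\Upsilon)\rangle = \sum_j\binom{k-2}{j}(-1)^{k-2-j}\langle\delta s_+(\mu_{\frac{k-2}{2}-j}),\delta s_+(\mu_{\frac{k-2}{2}+j})\rangle$.

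\textbf{Key steps.} The next step is to compute the individual pairings $\langle\delta s(\mu_{\underline m}),\delta s(\mu_{-\underline m})\rangle$. Since $\delta s(\mu_{\pm\underline m})$ is a scalar multiple of $\varphi_{M+1}(t_M(\mu_{\pm\underline m})^*)$, and since by Remark \ref{normalizationD2} we have $\imath^\vee\circ\varphi_{M+1} = \varphi_{M+1}$ and $s'(\varphi_{M+1}(\mu))=\varphi_{M+1}(\mu)$, the pairing becomes $4\binom{2M}{k_{\rm id}-2}^2\langle\varphi_{M+1}(t_M(\mu_{\underline m})^*),\varphi_{M+1}(t_M(\mu_{-\underline m})^*)\rangle_{\cB}$, which via the isomorphism $V(2M+2)\simeq\cP(2M+2)$ and Lemma \ref{auxlem} (integration over $S^3$) reduces to the pairing $\langle\;,\;\rangle_\cP$ on $\cP(2M+2)$ of Remark \ref{pairP}. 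The quantities $t_M(\mu_{\pm\underline m})^*$ were already unwound in the proof of Proposition \ref{calcinfty02}: $t_M(\mu_{-\underline m})(P) = (-1)^{\frac{k_c-2}{2}-m_c}\langle y^{M-\bar m}x^{M+\bar m},P\rangle_\cP$ with $\bar m = m_{\rm id}-m_c$, and taking the $*$ (differentiation by $\partial_x\partial_y$) and evaluating on the Clebsch--Gordan-type element gives a single binomial coefficient $\binom{2M+2}{M+\bar m+1}^{-1}$ times signs, exactly as displayed there. So each term $\langle\delta s(\mu_{\underline m}),\delta s(\mu_{-\underline m})\rangle$ collapses to something like $\pm\,4\binom{2M}{k_{\rm id}-2}^2(2M+1)(2M+2)\binom{2M+2}{M+\bar m+1}^{-1}/\binom{2M+2}{M-\bar m+1}$ up to elementary bookkeeping, with the normalizing pairing $\langle\mu_{\underline m},\mu_{-\underline m}\rangle_V$ from \eqref{pairmum} entering as well.

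\textbf{Summation and the main obstacle.} Finally I would substitute back into $\langle\delta s(\Upsilon)\rangle = \sum_{\underline m}c_{\underline m}\langle\delta s(\mu_{\underline m}),\delta s(\mu_{-\underline m})\rangle$ and perform the sum over $\underline m$ (equivalently over $\bar m$ after absorbing the $m_{\rm id}+m_c$ degrees of freedom). The expectation is that the combinatorial coefficients $c_{\underline m}$ (products of $\binom{k_{\rm id}-2}{\cdot}$ and $\binom{k_c-2}{\cdot}$) cancel against the denominators $\binom{k_{\rm id}-2}{\frac{k_{\rm id}-2}{2}-m_{\rm id}}^{-1}\binom{k_c-2}{\frac{k_c-2}{2}-m_c}^{-1}$ coming from $\langle\mu_{\underline m},\mu_{-\underline m}\rangle_V$ and from the $t_M(\mu_{\pm\underline m})^*$ computation, leaving a Vandermonde/Chu--Vandermonde-type sum such as $\sum_{\bar m}\binom{2M+2}{M+\bar m+1}$-style identity (or the square thereof) that evaluates to $\binom{2M+2}{k_{\rm id}-1}^{-1}$ up to a power of $2$; this is the mechanism producing the factor $\frac{2}{3}$ (from $\frac{1}{2M+3}$-type normalizations in $\varphi_{M+1}$, since $2(M+1)+1 = 2M+3$ and one more factor of $3$ appears — more precisely from $\langle\varphi_{M+1}(\cdot),\varphi_{M+1}(\cdot)\rangle$ carrying a $\frac{1}{(2M+3)}\binom{2M+2}{\cdot}^{-1}$ by Lemma \ref{auxlem}). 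The hard part will be carrying out this final binomial summation cleanly and correctly tracking all the signs and powers of $2$ and of $(2M+1), (2M+2)$ through the chain $\Upsilon\to\sum c_{\underline m}\mu_{\underline m}\otimes\mu_{-\underline m}\to t_M\to *\to\varphi_{M+1}\to S^3$-integration; in particular getting the rational constant $\frac{2}{3}\binom{2M}{k_{\rm id}-2}^2\binom{2M+2}{k_{\rm id}-1}^{-1}$ rather than something off by a small factor requires care, but all ingredients are already assembled in Proposition \ref{keylemma2}, Proposition \ref{calcinfty02}, Lemma \ref{auxlem}, and Remarks \ref{pairP}--\ref{pairV}.
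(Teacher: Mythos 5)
Your proposal is a valid alternative route but not the one the paper takes, and it is worth being clear about the difference. You mimic the real‑place Lemma~\ref{lemoonpairupsilon1}: expand $\Upsilon=\sum_{\underline m} c_{\underline m}\,\mu_{\underline m}\otimes\mu_{-\underline m}$ and sum the individual pairings $\langle\delta s(\mu_{\underline m}),\delta s(\mu_{-\underline m})\rangle$. The paper's complex‑place proof is \emph{not} of this shape. It keeps $\Upsilon$ intact, computes the polynomial $\iota\,t_M(\Upsilon)^{\ast}=(2M+2)^2(2M+1)^2\,x_1y_1x_2y_2\bigl|\begin{smallmatrix}x_1&y_1\\x_2&y_2\end{smallmatrix}\bigr|^{2M}\in\cP(2M+2)\otimes\cP(2M+2)$ in one stroke, establishes once and for all (via Schur plus one evaluation with Lemma~\ref{auxlem}) the pairing normalization $\langle\varphi_{M+1}(\cdot),\varphi_{M+1}(\cdot)\rangle_\cB=\frac{-1}{2M+3}\binom{2M+2}{k_{\rm id}-1}^{-1}\langle\cdot,\cdot\rangle_\cP$, and then the whole thing collapses to the single binomial sum $\sum_{i}\binom{2M}{i}\binom{2M+2}{i+1}^{-1}=\frac{2M+3}{6}$.

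Your route does go through, but it costs one extra identity: decomposing $\Upsilon$ gives $c_{\underline m}=(-1)^{M-m}\binom{k_{\rm id}-2}{\frac{k_{\rm id}-2}{2}+m_{\rm id}}\binom{k_c-2}{\frac{k_c-2}{2}+m_c}$, and the individual pairing (which you would have to derive in general, since Lemma~\ref{pairv02} only treats $\underline m=\underline 0$) is
$\langle\delta s(\mu_{\underline m}),\delta s(\mu_{-\underline m})\rangle=\frac{4\binom{2M}{k_{\rm id}-2}^2(2M+2)^2(2M+1)^2}{2M+3}\binom{2M+2}{k_{\rm id}-1}^{-1}(-1)^{M-\bar m}\binom{2M+2}{M-\bar m+1}^{-1}$,
so the sign factors $(-1)^{M-m}(-1)^{M-\bar m}$ cancel and you are left with the \emph{double} sum $\sum_{a,b}\binom{k_{\rm id}-2}{a}\binom{k_c-2}{b}\binom{2M+2}{k_{\rm id}-1-a+b}^{-1}$, which only collapses to the paper's single sum after a Chu--Vandermonde identity in the inner variable. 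Two small inaccuracies in your sketch: the shape you guess for the individual pairing, $\binom{2M+2}{M+\bar m+1}^{-1}/\binom{2M+2}{M-\bar m+1}$, is not right (those two binomials are equal, so it would be a square; the true expression has a single $\binom{2M+2}{M-\bar m+1}^{-1}$), and you drop the universal factors $\binom{2M+2}{k_{\rm id}-1}^{-1}$ and $\frac{1}{2M+3}$ coming from \eqref{pairPpairB}. These are fixable bookkeeping slips, not gaps in the method; the paper's direct expansion of $\iota\,t_M(\Upsilon)^{\ast}$ simply sidesteps them and is the cleaner of the two.
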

\begin{proof}
By proposition \ref{keylemma2} and equation \eqref{ssdelta}, 
\[
\langle\delta s(\underline{\Upsilon})\rangle=4\binom{2M}{k_{\rm id}-2}^2\langle\varphi_{M+1}(t_M(\underline{\Upsilon})^*)\rangle_\cB,
\]
where $\varphi_{M+1}:V(2M+2)\otimes V(2M+2)\rightarrow \cB(\chi_{\underline{k}})\times \cB(\chi_{2-{\underline{k}}})$.
In order to control $t_M(\Upsilon)^\ast$ we compute its image through the isomorphism $\iota$ of \eqref{dualVP}:
\begin{eqnarray*}
    \iota t_M(\Upsilon)^\ast(x_1,y_1,x_2,y_2)&:=&t_M(\Upsilon)^\ast\left(\left|\begin{array}{cc}
    X_1 & Y_1 \\
    x_1 & y_1
\end{array}\right|^{2M+2}\left|\begin{array}{cc}
    X_2 & Y_2 \\
    x_2 & y_2
\end{array}\right|^{2M+2}\right)\\
&=&(2M+2)^{2}(2M+1)^{2}x_1y_1x_2y_2\cdot t_M(\Upsilon)\left(\left|\begin{array}{cc}
    X_1 & Y_1 \\
    x_1 & y_1
\end{array}\right|^{2M}\left|\begin{array}{cc}
    X_2 & Y_2 \\
    x_2 & y_2
\end{array}\right|^{2M}\right)\\
&=&(2M+2)^{2}(2M+1)^{2}x_1y_1x_2y_2\cdot \Upsilon\left(\left|\begin{array}{cc}
    \underline X_1 & \underline Y_1 \\
    \underline z_1 & \underline w_1
\end{array}\right|^{\underline k-2}\left|\begin{array}{cc}
    \underline X_2 & \underline Y_2 \\
    \underline z_2 & \underline w_2
\end{array}\right|^{\underline k-2}\right)\\
&=&(2M+2)^{2}(2M+1)^{2}x_1y_1x_2y_2\left|\begin{array}{cc}
    x_1 & y_1 \\
     x_2 & y_2
\end{array}\right|^{2M},
\end{eqnarray*}
where $\underline z_1=(x_1,y_1)$, $\underline w_1=(y_1,-x_1)$, $\underline z_2=(x_2,y_2)$ and  $\underline w_2=(y_2,-x_2)$. 
To compute $\langle\varphi_{M+1}(t_M(\underline{\Upsilon})^*)\rangle_\cB$, notice that
\[
\langle \varphi_{M+1}(\cdot),\varphi_{M+1}(\cdot)\rangle_\cB\in \Hom_{\SU(2)}(V(2M+2)\otimes V(2M+2),\C)=\Hom_{\SU(2)}(V(2M+2), V(2M+2))=\C,
\]
by Schur lemma, but remark \ref{pairP} provides a rather simpler $\SU(2)$-equivariant pairing $\langle\cdot,\cdot\rangle_\cP$. Thus, both pairings must coincide up to constant. To find such a constant we use $\mu_1=(y^{M+1})^\vee,\mu_2=(x^{M+1})^\vee\in V(2M+2)$ corresponding to $x^{2M+2},y^{2M+2}\in\cP(2M+2)$, respectively. We compute
\begin{eqnarray*}
    \langle x^{2M+2},y^{2M+2}\rangle_\cP&=&1\\
    \langle \varphi_{M+1}(\mu_1),\varphi_{M+1}(\mu_2)\rangle&=&\int_{s^3}\mu_1\left(\left|\begin{array}{cc}\alpha&\beta\\x&y\end{array}\right|^{k_{\rm id}-1}\left|\begin{array}{cc}-\bar\beta&\bar\alpha\\x&y\end{array}\right|^{k_c-1}\right)\mu_2\left(\left|\begin{array}{cc}\alpha&\beta\\x&y\end{array}\right|^{k_c-1}\left|\begin{array}{cc}-\bar\beta&\bar\alpha\\x&y\end{array}\right|^{k_{\rm id}-1}\right)d(\alpha,\beta)\\
    &=&-\int_{S^3}|\alpha|^{2k_{\rm id}-2}|\beta|^{2k_c-2}d(\alpha,\beta)=\frac{-1}{2M+3}\binom{2M+2}{k_{\rm id}-1}^{-1}.
\end{eqnarray*}
Hence, we deduce
\begin{equation}\label{pairPpairB}
    \langle \varphi_{M+1}(\cdot),\varphi_{M+1}(\cdot)\rangle_\cB=\frac{-1}{2M+3}\binom{2M+2}{k_{\rm id}-1}^{-1}\langle\cdot,\cdot\rangle_\cP.
\end{equation}
This implies that
\begin{eqnarray*}
\frac{\langle \varphi_{M+1}(t_M(\Upsilon)^\ast)\rangle}{(2M+2)^{2}(2M+1)^{2}}&=&\frac{-1}{2M+3}\binom{2M+2}{k_{\rm id}-1}^{-1}\langle \iota t_M(\Upsilon)^\ast\rangle_\cP\\
&=&\frac{1}{2M+3}\binom{2M+2}{k_{\rm id}-1}^{-1}\sum_{i=0}^{2M}\binom{2M}{i}\langle x_1^{i+1}y_1^{2M-i+1},(-x_2)^{2M-i+1}y_2^{i+1} \rangle_\cP
\\
&=&\frac{1}{2M+3}\binom{2M+2}{k_{\rm id}-1}^{-1}\sum_{i=0}^{2M}\binom{2M}{i}\binom{2M+2}{i+1}^{-1}=\frac{1}{6}\binom{2M+2}{k_{\rm id}-1}^{-1}.
\end{eqnarray*}
Hence, the result follows.  
\end{proof}

Another interesting element to consider is $\delta s(\mu_{\underline 0})$, where $\underline 0=(0,0)$. Since $s$ is $\SU(2)$-equivariant, $\delta$ is in the Lie algebra of the diagonal torus $T(\C)$ and $\mu_{\underline 0}$ is invariant under the action of $T(\C)$, we deduce that $\delta s(\mu_{\underline 0})$ generates the subspace of $D(\underline k)$ invariant under $\kappa(e^{i\theta},0)\in\SU(2)$ with minimal weight. 
\begin{lemma}\label{pairv02}
We have that
\[
  \langle\delta s(\mu_{\underline 0}),\delta s(\mu_{\underline 0})\rangle=4(-1)^{M}(2M+2)^2(2M+1)^2(2M+3)^{-1}\binom{2M+2}{k_{\rm id}-1}^{-1}\binom{2M}{k_{\rm id}-2}^2\binom{2M+2}{M+1}^{-1}
\]
\end{lemma}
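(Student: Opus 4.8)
The plan is to follow the proof of Lemma~\ref{lemoonpairupsilon} almost verbatim, replacing the element $\Upsilon$ by $\mu_{\underline 0}$ and the tensor square $V(\underline k-2)\otimes V(\underline k-2)$ by a single copy of $V(\underline k-2)$. First I would unwind the definition of the inner product on $D(\underline k)$ from \S\ref{calcI(chi)2}, so that
\[
\langle\delta s(\mu_{\underline 0}),\delta s(\mu_{\underline 0})\rangle=\langle\imath\,\delta s(\mu_{\underline 0}),\,s'\delta s(\mu_{\underline 0})\rangle_\cB .
\]
By Proposition~\ref{keylemma2} the first argument is $-2\binom{2M}{k_{\rm id}-2}\varphi_{M+1}(t_M(\mu_{\underline 0})^{*})$ viewed inside $\cB(\chi_{\underline k})$, and by \eqref{ssdelta} the second is $-2\binom{2M}{k_{\rm id}-2}\varphi_{M+1}(t_M(\mu_{\underline 0})^{*})$ viewed inside $\cB(\chi_{2-\underline k})$; hence the quantity equals $4\binom{2M}{k_{\rm id}-2}^{2}\langle\varphi_{M+1}(t_M(\mu_{\underline 0})^{*})\rangle_\cB$. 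Applying the identity \eqref{pairPpairB} then reduces the computation to the $\cP$-self-pairing of $t_M(\mu_{\underline 0})^{*}\in V(2M+2)$.

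The second step is to identify $\iota\,t_M(\mu_{\underline 0})^{*}$ explicitly as a polynomial in $\cP(2M+2)$. Specializing the computation of $t_M(\mu_{-\underline m})$ carried out inside the proof of Proposition~\ref{calcinfty02} to $\underline m=\underline 0$ gives $\iota\,t_M(\mu_{\underline 0})=(-1)^{(k_c-2)/2}x^{M}y^{M}$. Combining this with the definition of $(\cdot)^{*}$ from Proposition~\ref{keylemma2}, one has $\iota\,t_M(\mu_{\underline 0})^{*}(x_0,y_0)=t_M(\mu_{\underline 0})\bigl(\partial_X\partial_Y(Xy_0-Yx_0)^{2M+2}\bigr)$; carrying out the two derivatives (they produce a factor $-(2M+2)(2M+1)x_0y_0$) yields $\iota\,t_M(\mu_{\underline 0})^{*}=(-1)^{k_c/2}(2M+2)(2M+1)\,x^{M+1}y^{M+1}$. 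Remark~\ref{pairP} now gives $\langle x^{M+1}y^{M+1},x^{M+1}y^{M+1}\rangle_\cP=(-1)^{M+1}\binom{2M+2}{M+1}^{-1}$, so, using that $k_c$ is even, $\langle t_M(\mu_{\underline 0})^{*}\rangle_\cP=(-1)^{M+1}(2M+2)^{2}(2M+1)^{2}\binom{2M+2}{M+1}^{-1}$. Plugging this, together with the factor $\tfrac{-1}{2M+3}\binom{2M+2}{k_{\rm id}-1}^{-1}$ coming from \eqref{pairPpairB}, into the expression of the previous paragraph and collecting signs ($(-1)^{M+1}\cdot(-1)=(-1)^{M}$) produces the asserted formula.

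I do not expect any genuine obstacle: the argument is a purely algebraic specialization of Lemma~\ref{lemoonpairupsilon}, and there is no analytic input (no integral to be estimated). The only thing demanding care is the bookkeeping — keeping track of which copy of $\varphi_{M+1}$ (the one into $\cB(\chi_{\underline k})$ or the one into $\cB(\chi_{2-\underline k})$) sits in each slot of $\langle\,,\,\rangle_\cB$, and following the signs and binomial coefficients contributed successively by $\iota$, by the operation $(\cdot)^{*}$, by \eqref{pairPpairB}, and by Remark~\ref{pairP}.
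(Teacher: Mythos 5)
Your proposal is correct and follows the paper's own proof almost verbatim: reduce by Proposition~\ref{keylemma2} and \eqref{pairPpairB} to a $\cP$-self-pairing of $\iota\,t_M(\mu_{\underline 0})^{*}$, identify this polynomial as $(-1)^{k_c/2}(2M+2)(2M+1)\,x^{M+1}y^{M+1}$, and apply Remark~\ref{pairP}. The only cosmetic difference is that you derive $\iota\,t_M(\mu_{\underline 0})$ by specializing the formula in the proof of Proposition~\ref{calcinfty02} at $\underline m=\underline 0$, whereas the paper unwinds the definition of $t_M$ and $\mu_{\underline 0}$ directly; both lead to $(-1)^{(k_c-2)/2}x^My^M$.
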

\begin{proof}
    By proposition \ref{keylemma2} and equation \eqref{pairPpairB},
    \[
     \langle\delta s(\mu_{\underline 0}),\delta s(\mu_{\underline 0})\rangle=4\binom{2M}{k_{\rm id}-2}^2\langle\varphi_{M+1}(t_M(\mu_{\underline 0})^*),\varphi_{M+1}(t_M(\mu_{\underline 0})^*)\rangle_\cB=\frac{-4\binom{2M+2}{k_{\rm id}-1}^{-1}\binom{2M}{k_{\rm id}-2}^2}{2M+3}\langle \iota t_M(\mu_{\underline 0})^*,\iota t_M(\mu_{\underline 0})^*\rangle_\cP.
    \]
    Moreover,
    \begin{eqnarray*}
        \iota t_M(\mu_{\underline 0})^*&=&t_M(\mu_{\underline 0})^*\left((Xy-Yx)^{2M+2}\right)=(2M+2)(2M+1)t_M(\mu_{\underline 0})\left(-xy(Xy-Yx)^{2M}\right)\\
        &=&-xy(2M+2)(2M+1)t_M(\mu_{\underline 0})\left((Xy-Yx)^{2M}\right)\\
        &=&-xy(2M+2)(2M+1)\mu_{\underline 0}\left(\left|\begin{array}{cc}X_{\rm id}&Y_{\rm id}\\x&y\end{array}\right|^{k_{\rm id}-2}\left|\begin{array}{cc}X_{c}&Y_{c}\\y&-x\end{array}\right|^{k_c-2}\right)\\
        &=&(-1)^{\frac{k_{c}}{2}}(2M+2)(2M+1)x^{M+1}y^{M+1}.
    \end{eqnarray*}
    Thus, 
    \[
    \langle \iota t_M(\mu_{\underline 0})^*,\iota t_M(\mu_{\underline 0})^*\rangle_\cP=(-1)^{M+1}(2M+2)^2(2M+1)^2\binom{2M+2}{M+1}^{-1},
    \]
    and the result follows.
\end{proof}

\Addresses

\printbibliography

\end{document}